\newtheorem{definition}{Definition}[section]
\newtheorem{lemma}[definition]{Lemma}
\newtheorem{theorem}[definition]{Theorem}
\newtheorem{proposition}[definition]{Proposition}
\newtheorem{corollary}[definition]{Corollary}
\theoremstyle{remark}
\newtheorem{example}[definition]{Example}
\newtheorem{remark}[definition]{Remark}
\newtheorem{problem}[definition]{Problem}
\newcommand{\N}{\mathbb{N}}
\newcommand{\Z}{\mathbb{Z}}
\newcommand{\C}{\mathbb{C}}
\newcommand{\Q}{\mathbb{Q}}
\newcommand{\id}{\mathrm{id}}
\newcommand{\GL}{\mathrm{GL}}
\newcommand{\matr}{\mathrm{mat}}
\newcommand{\card}{\mathrm{card}}
\newcommand{\For}{\mathbb{F}}
\newcommand{\ym}{\mathfrak{P}}
\newcommand{\sym}{\mathfrak{S}}
\newcommand{\uni}{\mathbb{U}}
\newcommand{\buni}{\mathbb{U\!F}}
\newcommand{\proba}{\mathbb{P}}
\newcommand{\esper}{\mathbb{E}}
\newcommand{\lle}{\left[\!\left[} 
\newcommand{\Dc}{\mathcal{D}}
\newcommand{\Ec}{\mathcal{E}}
\newcommand{\Fc}{\mathcal{F}}
\newcommand{\Gc}{\mathcal{G}}
\newcommand{\Hc}{\mathcal{H}}
\newcommand{\rre}{\right]\!\right]} 
\newcommand{\opp}{^{\mathrm{opp}}}
\newcommand{\Vect}{\mathrm{Span}}
\newcommand{\jacobi}[2]{(\frac{#1}{#2})}
\newcommand{\comment}[1]{}
\newcommand{\partiso}[4]{(#1 \,\,^{#3}\!\!\rightleftarrows_{#4} #2)}
\newcommand{\partext}[4]{(#1 \,\,^{#3}\!\!\rightleftarrows {\scriptstyle \!#4}\,\, \normalsize  #2)}
\begin{document}
\title{Partial isomorphisms over finite fields}
\author{Pierre-Lo\"ic M\'eliot}
\address{Laboratoire de Math\'ematiques -- B\^atiment 425 -- Facult\'e des Sciences d'Orsay -- Universit\'e Paris-Sud -- F-91405 ORSAY -- France}
\email{pierre-loic.meliot@math.u-psud.fr}
\date{\today}
\keywords{Combinatorics over finite fields, linear groups, Ivanov-Kerov algebra.}
\subjclass[2010]{05E15, 11T99, 20G40.}

\begin{abstract}
In this paper, we construct a combinatorial algebra of partial isomorphisms that gives rise to a ``projective limit'' of the centers of the group algebras $\C\GL(n,\For_{q})$. It allows us to prove a $\GL(n,\For_{q})$-analogue of a theorem of Farahat and Higman regarding products of conjugacy classes of permutations.
\end{abstract}

\maketitle

\hrule
\setcounter{tocdepth}{2}
\tableofcontents
\hrule
\bigskip

\section{Introduction}
In this paper, $n$ is a positive integer; $q$ is a prime power; $\For_{q}$ is a finite field with $q$ elements; and $\GL(n,\For_{q})$ is the group of invertible $n\times n$ matrices with coefficients in $\For_{q}$, or, equivalently, the group of linear isomorphisms of the $\For_{q}$-vector space $(\For_{q})^{n}$. It will be convenient to write $gh$ for the composition of isomorphisms $h \circ g$ (beware of the order of composition). The matrix of an isomorphism $u : V \to W$ between two $(\For_{q})$-vector spaces and with respect to two bases $\Ec=(e_{1},\ldots,e_{n})$ of $V$ and $\Fc=(f_{1},\ldots,f_{n})$ of $W$ is
$$\matr_{\Ec,\Fc}(u)=\matr_{\Fc}(u(e_{1}),\ldots,u(e_{n})),$$
the vectors being written in columns; then $\matr_{\Ec,\Gc}(uv)=\matr_{\Fc,\Gc}(v)\,\matr_{\Ec,\Fc}(u)$. \bigskip

\subsection{Generic products in group algebras and the approach of Ivanov and Kerov}
If $G= \GL(n,\For_{q})$, we shall be interested in the group algebra $\C G$, and more precisely in its center $Z(\C G)$. A linear basis of $Z(\C G)$ is the set of conjugacy classes of $G$, viewed as the formal sums of their elements. We want to address the following kind of problem:
\begin{problem}
Let $a \neq b \neq 1$ be two elements of $(\For_{q})^{\times}$, and $C_{a,n}$ and $C_{b,n}$ the conjugacy classes of the diagonal matrices of size $n$
$$D_{a}=\begin{pmatrix} a & & &\\
& 1 &            & \\
 &   & \ddots &\\
 &   &           & 1\end{pmatrix} \qquad\text{and}\qquad D_{b}=\begin{pmatrix} b & & &\\
& 1 &            & \\
 &   & \ddots &\\
 &   &           & 1\end{pmatrix}. $$
What is the expansion $\sum_{\lambda \in \Lambda(a,b)} c_{a,b,n}^{\lambda}\,C_{\lambda}$ in conjugacy classes of the product $C_{a,n}*C_{b,n}$ computed in the center of the group algebra $Z(\C\GL(n,\For_{q}))$?
\end{problem}
\bigskip

At first sight, this problem might seem to be not so difficult: since $D_{a}$ and $D_{b}$ each leave invariant a subspace of dimension $n-1$, the product $P=AB$ of two matrices conjugated to $D_{a}$ and $D_{b}$ has to leave invariant a subspace of dimension at least $n-2$, and as we shall see in a moment, this only leaves a few possibilities for the conjugacy class $\lambda$ of $P$. Moreover, by looking at how many ways the eigenspaces of the matrices can intersect, one can guess that the coefficients $c_{a,b,n}^{\lambda}$ count configurations of subspaces in $(\For_{q})^{n}$, and are therefore rational functions in $q$ and in its powers $q^{k}$, up to $k=n$.\bigskip

However, it is in practice quite hard to determine the set of possible conjugacy classes $\Lambda(a,b)$; and also quite hard to compute the structure coefficients. The reader can already have a look at Theorem \ref{productdegree1}, to realize that the result depends in particular on:\vspace{2mm}
\begin{itemize}
\item whether $q$ is even or odd;\vspace{2mm}
\item in the odd case, whether $ab$ is a square in $(\For_{q})^{\times}$ or not.\vspace{2mm}
\end{itemize}  
Also, it is \emph{a priori} unclear that the coefficients $c_{a,b,n}^{\lambda}$ are in fact all polynomials in $q^{n}$, with rational coefficients independent of $n$. One might understand intuitively why this is true from the previous informal discussion: every number of subspaces of fixed dimension $k$ in $(\For_{q})^{n}$ is a polynomial in $q^{n}$ (of degree $k$), and the same holds when counting ``finite-dimensional configurations''. However, a formal and rigorous proof demands a lot of combinatorial preparations, especially if one wants to \emph{compute} the actual polynomials. The author conjectured this polynomiality a few years ago, and this will be one of the major result of the paper. We shall prove it in the general setting where $D_{a}$ and $D_{b}$ are replaced by arbitrary matrices $A$ and $B$ that are completed by $1$'s on the diagonal to obtain matrices of size $n$. We shall also provide a general framework which reduces all the computations to the case when $n=k+l$, $k$ and $l$ being the sizes of $A$ and $B$ ($k=l=1$ for $A=D_{a}$ and $B=D_{b}$). In other words, knowing the structure coefficients $c_{A,B,k+l}^{\lambda}$, we shall have at the end of the paper an easy rule to compute all the coefficients $c_{A,B,n}^{\lambda}$. \bigskip

A similar and simpler problem has been solved for permutations by Farahat and Higman in \cite{FH59}, and forty years later, it was given a beautiful explanation by Ivanov and Kerov, see \cite{IK99}. Fix two permutations $\sigma$ and $\rho$ of size $k$ and $l$; we also assume $\sigma$ and $\rho$ without fixed points in $\lle 1,k\rre$ and in $\lle 1,l\rre$. For any $n \geq \max(k,l)$, one can view $\sigma$ and $\rho$ as elements of the symmetric group of order $n$, denoted $\sym(n)$; $\sigma$ fixes the integers after $k$ and $\rho$ fixes the integer after $l$.  Denote $C_{\sigma,n}$ and $C_{\rho,n}$ the conjugacy classes of $\sigma$ and $\rho$ in $\C\sym(n)$. Then, there exists a unique finite set of permutations $S(\sigma,\rho) \subset \sym(k+l)$, and polynomials $p_{\sigma,\rho}^{\nu}(n)$ with rational coefficients and integer values at integers, such that
$$C_{\sigma,n}*C_{\rho,n}=\sum_{\nu \in S(\sigma,\rho)} p^{\nu}_{\sigma,\rho}(n)\,C_{\nu,n}.$$
Using the well-known labeling of conjugacy classes of $\sym(n)$ by integer partitions of size $n$, one can of course restate this result with two partitions $\lambda$ and $\mu$ of size $k$ and $l$ and without parts of size $1$, and a finite set of partitions of size smaller than $k+l$ and again without parts of size $1$:\label{fhig}
$$C_{\lambda\sqcup 1^{n-k}}*C_{\mu \sqcup 1^{n-l}}=\sum_{|\rho| \leq k+l} p^{\rho}_{\lambda,\mu}(n)\,C_{\rho \sqcup 1^{n-|\rho|}}.$$

\bigskip

The proof of Ivanov and Kerov uses the following idea: by manipulating \emph{partial permutations} that are permutations with a distinguished support $A \subset \lle 1,n\rre$, one can construct an inverse system of graded algebras 
$$\begin{CD} \cdots @>>>\mathscr{Z}(n+2)  @>>>\mathscr{Z}(n+1)  @>>>\mathscr{Z}(n) @>>> \cdots \end{CD}\,\,,$$
such that:\vspace{2mm}
\begin{enumerate}
\item each algebra $\mathscr{Z}(n)$ projects onto $Z(\C\sym(n))$ via a morphism $\pi_{n}$;\vspace{2mm} 
\item each algebra has a basis $(A_{\lambda,n})_{\lambda}$ labelled by partitions $\lambda$ of size smaller than $n$, with $\pi_{n}(A_{\lambda,n})=p_{\lambda}(n)\,C_{\lambda,n}$; $p_{\lambda}(n)$ is an explicit polynomial in $n$;\vspace{2mm}
\item the bases $A_{\lambda,n}$ are compatible, \emph{i.e.}, if $|\lambda|\leq n$, then the map $\mathscr{Z}(n+1)  \to \mathscr{Z}(n)$ sends $A_{\lambda,n+1}$ to $A_{\lambda,n}$.\vspace{2mm}
\end{enumerate}
From this construction, an identity
$$A_{\lambda}*A_{\mu}=\sum_{|\nu| \leq |\lambda|+|\mu|} c_{\lambda,\mu}^{\nu}\,A_{\nu}.$$
in the projective limit $\mathscr{Z}(\infty)=\varprojlim_{n \to \infty}\mathscr{Z}(n)$ gives immediately the identity
$$C_{\lambda,n}*C_{\mu,n}=\sum_{|\nu| \leq |\lambda|+|\mu|} c_{\lambda,\mu}^{\nu}\,\frac{p_{\nu}(n)}{p_{\lambda}(n)\,p_{\mu}(n)}\,C_{\nu,n}.$$
in all the centers of the group algebras $Z(\C\sym(n))$; this provides an easy proof of the theorem of Farahat and Higman. The same techniques and kind of results have been studied more recently by the author for product of Geck-Rouquier elements in the centers of the Hecke algebras $\mathscr{H}_{q}(\sym_{n})$ of the symmetric groups (\emph{cf.} \cite{Mel10}); and by O. Tout in \cite{Tout12} for product of classes in the algebras $\C[\mathfrak{H}(n)\backslash\sym(2n)/\mathfrak{H}(n)]$ associated to the Gelfand pairs $(\sym(2n),\mathfrak{H}(n))$. \bigskip

The objective of this paper is to follow the same program for general linear groups over finite fields; unfortunately, in this setting, there are a lot of complications in comparison to the framework previously described. The naive idea is of course to replace partial permutations by \emph{partial isomorphisms}; but there are two major obstacles to this idea.\vspace{2mm}
\begin{enumerate}
\item For permutations, if $\sigma \in \sym(A)$ with $A \subset \lle 1,n\rre$ of size $k$, then there is a unique canonical way to see $\sigma$ as an element of $\sym(n)$; as explained before, one leaves the integers outside $A$ fixed by $\sigma$. The same cannot be done in a canonical way for isomorphisms: if $g$ is an $\For_{q}$-isomorphism of a subspace $V \subset (\For_{q})^{n}$ of dimension $k$, then $g$ admits a lot of extensions to isomorphisms of $(\For_{q})^{n}$, related to the choice of complement subspaces of $V$ inside $(\For_{q})^{n}$. And there is no reason or possibility to distinguish a particular extension among those.\vspace{2mm}
\item Knowing that, a natural thing to do is to take the \emph{mean of all possible extensions}; it seems then possible to define correctly the product of partial isomorphisms. But this still does not work: as we shall explain later (see Remark \ref{naive}), the algebra that one obtains by this construction is not associative. \vspace{2mm}
\end{enumerate}
The solution to these problems, and the main idea of this paper, is to define partial isomorphisms as \emph{pairs} of isomorphisms between two subspaces.\bigskip

\begin{remark}
An alternative construction of partial bijections and partial isomorphisms has been proposed by G. Olshanski in \cite{Ols90,Ols91}. Let $G=\varinjlim_{n \to \infty} G(n)$ an inductive limit of finite groups, and $G\supset K(0) \supset K(1) \supset \cdots \supset K(n) \supset \cdots $ a chain of subgroups of $G$, such that $K(n)$ commutes with $G(n)$ for all $n$. For instance one can take 
\begin{align*}
G(n)&=\sym(n) \qquad;\qquad K(n)=\{\sigma \in \sym(\infty)\,\,|\,\,\sigma_{\lle 1,n\rre}=\id_{\lle 1, n \rre}\} ;\\
\text{or}\,\,G(n)&=\GL(n,\For_q)\qquad;\qquad K(n)=\{g \in \GL(\infty,\For_q)\,\,|\,\,g_{|(\For_q)^n}=\id_{(\For_q)^n}\}.
\end{align*}
 In these situations, there is a structure of semigroup on the set of double cosets $\Gamma(n)=K(n)\backslash G/K(n)$. For instance, when $G=\sym(\infty)$, $\Gamma(n)$ can be identified with the set of bijections $\sigma : A \to B$ between two parts $A$ and $B$ of $\lle 1,n\rre$, and the product is
 $$(\sigma : A \to B) * (\tau : C \to D) = (\sigma\tau : \sigma^{-1}(B\cap C) \to \tau(B \cap C)).$$
Notice that the product $*$ written above is quite different from the product of partial permutations defined in \cite{IK99}, or from the product of partial bijections defined in \cite{Tout12}. A similar construction of semigroup can be performed when $G=\GL(\infty,\For_q)$. However, these semigroups (or the sets of double cosets $K(n)\backslash G\times G/K(n)$, studied in Okounkov's thesis \cite{Oko95}), do not really help for the specific problem that we are looking at, namely, the understanding of the structure coefficients of the group algebra centers $\C G(n)$: indeed, there is no natural morphism of semigroups $\Gamma(n) \to G(n)$.\bigskip

Unfortunately, we do not see how to use \emph{semigroup algebras} $\mathscr{Z}(n,\For_q)$ in order to construct a projective limit of the centers $\C\GL(n,\For_q)$; we shall however be satisfied with \emph{combinatorial algebras} labelled by partial isomorphisms, without an underlying structure of semigroup on the partial isomorphisms.
 \end{remark}

\subsection{Conjugacy classes of matrices and polypartitions}
For finite general linear groups, the conjugacy classes are given by Jordan's reduction of matrices. If $P(X)=X^{n}+a_{n-1}X^{n-1}+\cdots+a_{0}$ is a monic polynomial, its Jordan matrix is the $n \times n$ matrix
$$J(P)=\begin{pmatrix}
 0 &   &           &  & -a_{0}\\
 1 &0 &            &  & -a_{1}\\
    &1 & \ddots  &  & \vdots \\
    &   & \ddots & 0 & -a_{n-2}\\
    &   &            & 1 & -a_{n-1}
\end{pmatrix}.$$
Call partition of size $k$ a non-increasing sequence of positive integers $(\mu_{1},\ldots,\mu_{\ell})$ with $|\mu|=\sum_{i=1}^{\ell}\mu_{i}=k$. 
\begin{definition}
A polypartition of size $n$ over the finite field $\For_{q}$ is a family of partitions $$\bbmu=\{\mu(P_{1}),\ldots,\mu(P_{r})\}$$ labelled by monic irreducible polynomials over $\For_{q}$, all different from $X$, and such that
$$|\bbmu|=\sum_{i=1}^{r} (\deg P_{i})\,|\mu(P_{i})|=n.$$
\end{definition}
To such a polypartition, we associate the block-diagonal matrix $J(\bbmu)$ whose blocks are the Jordan matrices $J((P_{i})^{(\mu(P_{i}))_{j}})$ with $i \in \lle 1,r\rre$ and $j \in \lle 1,\ell(\mu(P_{i}))\rre$. For instance, if $\bbmu=((2,1)_{P_{1}},(1,1)_{P_{2}})$, then
$$J(\bbmu)=\begin{pmatrix}
J((P_{1})^{2}) &  & & \\
 & J(P_{1})  & & \\
 & & J(P_{2})   & \\
& &   & J(P_{2})   \\
\end{pmatrix}.
$$
Jordan's reduction ensures that each conjugacy class of $\GL(n,\For_{q})$ contains a unique matrix $J(\bbmu)$ with $\bbmu$ polypartition of size $n$ over $\For_{q}$; this result is a classical consequence of the classification of finitely generated modules over principal rings (here, $\For_{q}[X]$). Thus, a linear basis of $Z(\C\GL(n,\For_{q}))$ consists in the classes $C_{\bbmu}$ labelled by the set $\ym(n,\For_{q})$ of these polypartitions. All the elements in $C_{\bbmu}$ have for characteristic and minimal polynomials
$$\chi_{\bbmu}(X)=\prod_{i=1}^{r}(P_{i}(X))^{|\mu(P_{i})|}\qquad;\qquad m_{\bbmu}(X)=\prod_{i=1}^{r}(P_{i}(X))^{(\mu(P_{i}))_{1}}.$$
On the other hand, it can be shown that \label{cardinalconjugacy}
$$\card \,C_{\bbmu}=\frac{(q^{n}-1)(q^{n}-q)\cdots (q^{n}-q^{n-1})}{q^{|\bbmu|+2b(\bbmu)} \, \prod_{i=1}^{r} \prod_{k \geq 1}(q^{-\deg P_{i}})_{m_{k}(\mu(P_{i}))}}$$
where $b(\bbmu)=\sum_{i=1}^{r}(\deg P_{i})\,b(\mu(P_{i}))=\sum_{i=1}^{r}\sum_{j=1}^{\ell(\mu(P_{i}))}(\deg P_{i})\,(j-1)\,(\mu(P_{i}))_{j}$; $(x)_{m}=(x;x)_{m}$ is the Pochhammer symbol $(1-x)(1-x^{2})\cdots(1-x^{m})$; and $m_{k}(\mu)$ is the number of parts $k$ in a partition $\mu$. This formula is proven by using Hall's theory of modules over discrete valuation rings, see \cite[Chapters 2, 4]{Mac95}.
\medskip

\begin{example}
Consider the $\For_{5}$-polypartition 
$\bbmu=\{x^2 + x + 1 : (2),\,\, x + 3 : (1, 1)\}.$
A representative of this conjugacy class in $\GL(6,\For_{5})$ is the Jordan matrix
$$\begin{pmatrix}
2 & 0 & 0 & 0 & 0 & 0 \\
0 & 2 & 0 & 0 & 0 & 0 \\
0 & 0 & 0 & 0 & 0 & 4 \\
0 & 0 & 1 & 0 & 0 & 3 \\
0 & 0 & 0 & 1 & 0 & 2 \\
0 & 0 & 0 & 0 & 1 & 3
\end{pmatrix},$$
and the cardinality of the class is
$$\frac{(5^{6}-1)(5^{6}-5)(5^{6}-5^{2})(5^{6}-5^{3})(5^{6}-5^{4})(5^{6}-5^{5})}{5^{6+2}\,(1-5^{-2})\,(1-5^{-1})(1-5^{-2})}=38\,418\,317\,437\,500\,000\,000.$$
\end{example}\bigskip

For a polypartition $\bbmu$ of size $k \leq n$, denote $\bbmu\!\! \uparrow^{n}$  the polypartition of size $n$ obtained by adding parts $1$ to the partition $\mu(X-1)$. This amounts to complete matrices with $1$'s on the diagonal in the bottom right corner. Therefore, our initial problem can be reformulated and generalized as follows:
\begin{problem}
Fix two polypartitions $\bblambda$ and $\bbmu$ of size $k$ and $l$. What is the expansion of $C_{\bblambda\uparrow^{n}}*C_{\bbmu\uparrow^{n}}$ in completed conjugacy classes
$$\sum_{\bbnu}c^{\bbnu}_{\bblambda\bbmu}(n)\,C_{\bbnu\uparrow^{n}} \,\,\,?$$
In particular, what is the dependence in $n$ of the structure coefficients $c^{\bbnu}_{\bblambda\bbmu}(n)$?
\end{problem}
\bigskip

\subsection{Centers as Hecke algebras and outline of the paper}
Denote $G\opp$ the set $G$ endowed with the opposite of the product of the group $G$; $g \mapsto g^{-1}$ is then an isomorphism of groups between $G$ and $G\opp$. It will be extremely important in our discussion to see $\Z(\C G)$ as the Hecke algebra $\C[G \backslash (G \times G\opp )/ G\opp]$ --- this fact is true for every finite group $G$. Consider indeed the double action of $G$ and $G\opp$ on $G \times G\opp$ given by $g\cdot (g_{1},g_{2}) \cdot h = ((gg_{1}h^{-1}),(hg_{2}g^{-1})).$ These actions correspond to the injective maps
\begin{align*}
G &\hookrightarrow G \times G\opp \qquad;\qquad G\opp \hookrightarrow G \times G\opp\\
g &\mapsto (g,g^{-1}) \qquad\qquad\qquad\quad h \mapsto (h^{-1},h).
\end{align*}
Let us detemine the orbit of an element $(g_{1},g_{2})$ under the action on the left by $G$ and on the right by $G\opp$.
Since one can multiply on the right by $(g_{2},g_{2}^{-1})$ to obtain $(g_{1}g_{2},e_{G})$, the orbit depends only on $g_{1}g_{2}$. Then, if $(g_{1}g_{2},e_{G})$ and $(h_{1}h_{2},e_{G})$ are in the same orbit, there exists $k,l \in G$ such that
$$g_{1}g_{2} = k\,(h_{1}h_{2} )\,l^{-1}\qquad;\qquad lk^{-1}=e_{G}.$$
Thus, $g_{1}g_{2} = k\,(h_{1}h_{2} )\,k^{-1}$, and we have proved that the orbit of $(g_{1},g_{2})$ consists in pairs $(h_{1},h_{2})$ such that $g_{1}g_{2}$ and $h_{1}h_{2} $ are conjugated. For a conjugacy class $C_{\lambda}$ of $G$, denote 
$$C_{\lambda}'=\frac{1}{\card \,G} \sum_{g_{1}g_{2} \,\in\, C_{\lambda}} (g_{1},g_{2}) \quad \in \C[ G\backslash (G \times G\opp)/G\opp].$$
Suppose that $C_{\lambda}\,C_{\mu}=\sum_{\nu}\,a_{\lambda\mu}^{\nu}\,C_{\nu}$. Then, a simple computation allows one to check that 
$C_{\lambda}'\,C_{\mu}'=\sum_{\nu}\,a_{\lambda\mu}^{\nu}\,C_{\nu}'$. Therefore, the map
\begin{align*}
\C[G \backslash (G \times G\opp)/G\opp] & \to \Z(\C G) \\
C_{\lambda}' &\mapsto C_{\lambda}
\end{align*}
realizes an isomorphism of commutative complex algebras. 
\bigskip

We are looking for a family of algebras $\mathscr{Z}(n,\For_{q})$ with the following properties. We want them to form an inverse system of graded algebras
$$\begin{CD} \cdots @>>>\mathscr{Z}(n+2,\For_{q})  @>>>\mathscr{Z}(n+1,\For_{q})  @>>>\mathscr{Z}(n,\For_{q}) @>>> \cdots \end{CD}\,\,,$$
so that in the end we will  be able to look at the projective limit $\mathscr{Z}(\infty,\For_{q})$, which will allow us to make generic computations. Then, we want these algebras to project onto the centers $Z(\C\GL(n,\For_{q}))$ of the group algebras of the general linear groups over $\For_{q}$, so that these generic computations will turn into generic identities between conjugacy classes of isomorphisms. Finally, we wish to construct the $\mathscr{Z}(n,\For_{q})$'s in a fashion similar to \cite{IK99,Tout12}, which is very combinatorial and natural. This means that we want to define the elements of $\mathscr{Z}(n,\For_{q})$ as linear combinations of partial isomorphisms of $(\For_{q})^{n}$, this notion staying for the moment vague and undefined. \bigskip

Since $\mathrm{Z}(n,\For_{q})=Z(\C\GL(n,\For_{q}))$ can be seen as an Hecke subalgebra of the algebra $\C[\GL(n,\For_{q}) \times (\GL(n,\For_{q}))\opp ]$, the way to do it will be to do the same construction with algebras $\mathscr{A}(n,\For_{q})$ that project onto the group algebras $\C[\GL(n,\For_{q}) \times (\GL(n,\For_{q}))\opp ]$. Then, to obtain the $\mathscr{Z}(n,\For_{q})$'s, we shall just look at invariant subalgebras of these $\mathscr{A}(n,\For_{q})$'s. Our paper is therefore organized as follows:\vspace{2mm}
\begin{itemize}
\item In Section \ref{algpartiso}, we define partial isomorphisms of $(\For_{q})^{n}$ and their product, and we prove that we obtain indeed an algebra $\mathscr{A}(n,\For_{q})$ (Theorem \ref{mainan}). The main difficulty is to see that the product is associative; this is related to the properties of certain linear operators on the algebra, and  the associativity in $\mathscr{A}(n,\For_{q})$ is essentially equivalent to the commutativity of the algebra formed by these operators.\vspace{2mm}
\item In Section \ref{constructions}, we look at some invariant subalgebras $\mathscr{Z}(n,\For_{q})\subset \mathscr{A}(n,\For_{q})$, and we prove that they form an inverse system of graded commutative algebras, that project onto the centers $\mathrm{Z}(n,\For_{q})$. We deduce from it the result of polynomiality of the structure coefficients that we have evoked in this introduction; see our main Theorem \ref{generalFH}.\vspace{2mm}
\item Finally, in Section \ref{secdegree1}, we give the table of multiplication of all elements of degree $1$ in the projective limit $\mathscr{Z}(\infty,\For_{q})$. This gives in particular a concrete answer to the first problem stated in this paper.\vspace{2mm}
\end{itemize}
We conclude this introduction by two important remarks.\bigskip

\begin{remark}
In essence, this paper relies only on linear algebra and combinatorics over finite fields. However, some of the results and proofs will be written using the language of probability: so, we shall speak freely of probability laws, conditional laws, Markov chains, \emph{etc}., and we refer to \cite{Bil95} for any detail on these notions. The reason is that the product of the algebra $\mathscr{A}(n,\For_{q})$ of partial isomorphisms that we shall study will be defined by taking averages of certain extensions of the partial isomorphisms. Thus, discrete probability will provide in many situations a convenient way to describe and prove identities in $\mathscr{A}(n,\For_{q})$. Since we are only dealing with finite sets, for pure algebraists, every ``probabilistic'' statement can be replaced by a statement with a function on a finite set taking non-negative values that sum to $1$. So probability is rather a commodity of language and a way of thinking than an absolute necessity in our reasonings.
\end{remark}
\bigskip

\begin{remark}
The reader may naturally ask what could be the uses of the theory of partial isomorphisms developed in this paper. We think of it as a first step in a much larger program, namely, the asymptotic representation theory of the finite general linear groups $\GL(n,\For_{q})$ --- other important approaches appear in the papers \cite{GO10,KV07,GKV12}. For symmetric groups $\sym(n)$, an important problem is to understand the asymptotic behavior of the random character value $\chi^\lambda(\sigma)$, where $\sigma$ is a fixed permutation and $\chi^{\lambda}$ is a random irreducible character of $\sym(n)$ taken under some spectral measure, for instance the Plancherel measure of the group. These random character values are important examples of random variables ``with a non-commutative flavour'', and their asymptotic behavior is related to random matrix theory, exclusion processes of particules, free probability, and many other topics; see for instance \cite{BDJ99,BOO00,Oko00,Bor09}. The Ivanov-Kerov algebra of partial permutations has proven extremely useful in order to compute the moments of these random character values $\chi^{\lambda}(\sigma)$; thus, they allowed to determine their asymptotic distribution. Indeed, the computation of the moments $\esper[(\chi^\lambda(\sigma))^k]$ is directly related to the computation of products and powers of conjugacy classes in symmetric group algebras. We refer in particular to \cite{Bia01,IO02,Sni06,FM12,Mel11,Mel12}, where such techniques and results are detailed. \bigskip

We hope to be able to follow the same program for finite linear groups $\GL(n,\For_q)$. In particular, among the problems that we wish to solve after this paper, let us ask the following question. Take the normalized trace $\mathbb{1}_{I_{n}}$ of $\C\GL(n,\For_{q})$, which expands as linear combination of all irreducible normalized characters of the group:
$$\mathbb{1}_{(g=I_{n})}=\sum_{\bblambda \in \ym(n,\For_{q})} \frac{(\dim V^{\bblambda})^{2}}{\card \,\GL(n,\For_{q})}\,\chi^{\bblambda}(g).$$
For $g$ fixed (say, $g$ is a diagonal matrix $D_{a}$) and completed by $1$'s on the diagonal, we consider $X_{g}=\chi^{\bblambda}(g)$ as a random variable under the probability measure $$\proba_{n,q}[\bblambda]=\frac{(\dim V^{\bblambda})^{2}}{\card\, \GL(n,\For_{q})},$$ which is the Plancherel measure of the group. What are the asymptotics of the law of $X_{g}$? To compute the moments of $X_g$ amounts to compute the powers of $C_\bbmu$ in the group algebras $\C\GL(n,\For_q)$, where $\bbmu$ is the conjugacy class of $g$; whence the interest of our results for the asymptotic analysis of the random character values $X_g$. The idea would then be to use these observables $X_g$ to get a better understanding of the properties of asymptotic concentration of the measures $\proba_{n,q}$; in particular, one should be able to recover this way the results of \cite{Ful08,Dud08}.
\end{remark}

\subsection{List of common notations}
To help the reader keep track of the notations, and of the various characters appearing as exponents or indices throughout the paper, we have listed hereafter the conventions that we shall use, see the following table. On the other hand, we recall the Pochhammer symbol $(q^{-1})_{n}=(1-q^{-1})(1-q^{-2})\cdots(1-q^{-n})$. In the following we shall try to write every enumeration as a polynomial in $q$ and in these Pochhammer symbols in $q^{-1}$.

\begin{center}
$$\hspace*{-1cm}\begin{tabular}{cp{12cm}} 
$q$ & cardinality of the base field $\For_q$\\
$n$ & dimension of the ambiant vector space $(\For_q)^n$\\
$\mathrm{M}(k\times l,\For_q)$ & space of matrices with $k$ rows and $l$ columns, and coefficients in $\For_q$\\
$\GL(n,\For_q)$& group of linear isomorphisms of $(\For_q)^n$\\
$\mathscr{I}(n,\For_q)$ & set of partial isomorphisms of $(\For_q)^n$\\
$\mathscr{A}(n,\For_q)$ & algebra of partial isomorphisms over $(\For_q)^n$\\
$\mathscr{Z}(n,\For_q)$ & subalgebra of invariants for the double action of $\GL(n,\For_q)$\vspace{2mm}\\
\hline \vspace{-2mm}&\\
$U,V,W,\ldots$ & vector subspaces of the ambiant vector space\\
$U^+,V^+,\ldots$ & larger vector subspaces containing $U,V,\ldots$ \\
$k,l,m,\ldots$ & dimensions of vector subspaces of $(\For_q)^n$\\
$\Ec=(e_1,\ldots,e_k)$ & basis of a vector subspace of dimension $k$\\
$\Ec^+=(e_1,\ldots,e_{k^+})$ & completion of the basis $\Ec$ in a basis of a larger subspace of dimension $k^+$\\
$\mathrm{Span}(e_1,\ldots,e_k)$& vector space spanned by the independent vectors $e_1,\ldots,e_k$\vspace{2mm}\\
\hline \vspace{-2mm}&\\
$\bbmu$ & polypartition (family of partitions labelled by polynomials)\\
$t(g)$& type of an automorphism $g : V \to V$, given by a polypartition\\
$\mu(P)$ & partition in a polypartition $\bbmu$ corresponding to the polynomial $P$\vspace{2mm} \\
\hline \vspace{-2mm}&\\
$g_1,g_2$ & isomorphisms between vector subspaces of $(\For_q)^n$\\
$g^+$ & extension of an isomorphism $g : V \to W$ to larger subspaces $V^+$ and $W^+$\\
$ \matr_{\Ec,\Fc}(g)$ & matrix of $g : V \to W$ written with respect to two bases $\Ec$ of $V$ and $\Fc$ of $W$\\
$\partiso{V}{W}{g_1}{g_2}$ & partial isomorphism given by the two arrows $g_1 : V \to W$ and $g_2 : W \to V$ \\
$ k_1$ & dimension of $\mathrm{Fix}(g_1g_2)$, which is also $\ell(\mu(X-1))$ if $t(g_1g_2)=\bbmu$ \\
$ k_{11}$ & $m_1(\mu(X-1))$ if $t(g_1g_2)=\bbmu$\vspace{2mm} \\
\hline \vspace{-2mm}&\\
$\mathscr{E}\partiso{V}{W\uparrow W^{+}}{g_{1}}{g_{2}}$& set of trivial extensions of $\partiso{V}{W}{g_1}{g_2}$ with fixed right space $W^+$\\
$\mathscr{E}\partiso{V^+ \uparrow V}{W}{g_{1}}{g_{2}}$& set of trivial extensions of $\partiso{V}{W}{g_1}{g_2}$ with fixed left space $V^+$\\
$E_q(n,k^+,k,k_1)$ & cardinality of the previous sets if $k^+=\dim V^+$, $k=\dim V$ and $k_1=\dim \mathrm{Fix}(g_1g_2)$\vspace{1mm}\\
$\mathscr{E}\partiso{V^+ \uparrow V}{W \uparrow W^+}{g_{1}}{g_{2}}$& set of trivial extensions of $\partiso{V}{W}{g_1}{g_2}$ with fixed left and rightsubspaces $V^+$ and $W^+$\\
$F_q(k^+,k,k_1)$ & cardinality of the previous sets if $k^+=\dim V^+$, $k=\dim V$ and $k_1=\dim \mathrm{Fix}(g_1g_2)$
\end{tabular}$$

$$\begin{tabular}{cp{12cm}} 

$\mathrm{L}_V^{V^+}, \mathrm{R}_W^{W^+}, \mathrm{LR}_{(V,W)}^{(V^+,W^+)}$& extension operators with fixed subspaces\\
$\mathrm{L}^{V}, \mathrm{R}^{W}$& extension operators\vspace{2mm}\\
\hline \vspace{-2mm}&\\
$X_n$& $\mathrm{rank}(v_1,\ldots,v_n)$, with the $v_i$'s independent uniform random vectors\\
$\proba_{d,q}$& probabilities related to $(X_n)_{n \in \N}$ in a $\For_q$-vector space of dimension $d$\\
$\buni_m, \buni_V$ & uniform law on free families of size $m$ in $(\For_q)^n$, or on bases of $V$\\
$\mathbb{U}_{m},\mathbb{U}_{m,V}$& uniform law on vector subspaces of dimension $m$, or on vector subspaces of dimension $m$ and containing $V$\\
$\mathbb{C}_{l,U,W,Y}$& uniform law on vector subspaces of dimension $l$, containing $U$, and whose sum with $W$ is $Y$\vspace{2mm}\\
\hline \vspace{-2mm}&\\
$\pi_n$ & projection from $\mathscr{A}(n,\For_q)$ to $\C[\GL(n,\For_q)\times (\GL(n,\For_q))^\mathrm{opp}]$\\
$\phi_n^{n'}$ & projection from $\mathscr{Z}(n',\For_q)$ to $\mathscr{Z}(n,\For_q)$ \\
$\Pi_n$ & projection from $\mathscr{Z}(\infty,\For_q)$ to $Z(\C\GL(n,\For_q))$\\
$A_{\bbmu,n}, \widehat{A}_{\bbmu,n}, \widetilde{A}_{\bbmu,n}$ & various renormalizations of the class of label $\bbmu$ in $\mathscr{Z}(n,\For_q)$ \\
$\widehat{A}_\bbmu$ & generic conjugacy class of label $\bbmu$ in $\mathscr{Z}(\infty,\For_q)$\\
$C_{\bbmu\uparrow^n}$ & completed conjugacy class of label $\bbmu$ in $Z(\C\GL(n,\For_q))$\vspace{0.5mm}\\
$\widetilde{C}_{\bbmu\uparrow^n}$ & normalized class $C_{\bbmu\uparrow^n}/(\card\, C_{\bbmu\uparrow^n})$
\end{tabular}$$
\end{center}
\bigskip

\section{Partial isomorphisms and their algebra}\label{algpartiso}
Fix $n\geq 1$ and a finite field $\For_{q}$.
\subsection{Partial isomorphisms and trivial extensions}
\begin{definition}\label{defpair}
A partial isomorphism of $(\For_{q})^{n}$ is a pair $(g_{1}:V \to W, g_{2}: W \to V)$, where $V$ and $W$ are two vector subspaces of same dimension $k$ in $(\For_{q})^{n}$, and $g_{1}$ and $g_{2}$ are isomorphisms between these spaces. 
\end{definition}
\noindent We shall use the notation $\partiso{V}{W}{g_{1}}{g_{2}}$ for a partial isomorphism, and the set of all partial isomorphisms of $(\For_{q})^{n}$ will be denoted $\mathscr{I}(n,\For_{q})$. Recall that the number of isomorphisms of a $\For_{q}$-vector space of dimension $k$ is
$$(q^{k}-1)(q^{k}-q)\cdots(q^{k}-q^{k-1})=q^{k^{2}}\,(q^{-1})_{k},$$
and the number of vector subspaces of dimension $k$ inside $(\For_{q})^{n}$ is
$$\frac{(q^{n}-1)(q^{n}-q)\cdots(q^{n}-q^{k-1})}{(q^{k}-1)(q^{k}-q)\cdots(q^{k}-q^{k-1})}=q^{(n-k)k} \,\frac{(q^{-1})_{n}}{ (q^{-1})_{n-k}\,(q^{-1})_{k}}.$$
Therefore, the cardinality of $\mathscr{I}(n,\For_{q})$ is
$$\sum_{k=0}^{n} q^{2nk} \left(\frac{(q^{-1})_{n}}{(q^{-1})_{n-k}}\right)^{\!2}.$$\bigskip

There is a natural action of $\GL(n,\For_{q})$ (respectively, of $(\GL(n,\For_{q}))\opp$) on the left (resp., on the right) of $\mathscr{I}(n,\For_{q})$, namely,
$$k \cdot \partiso{V}{W}{g_{1}}{g_{2}} \cdot l = \partiso{k^{-1}(V)}{l^{-1}(W)}{kg_{1}l^{-1}\,}{\,lg_{2}k^{-1}}.$$
The orbits of this double action are labelled by polypartitions of size $k \in \lle 0,n\rre$. Indeed, it is easy to see that the orbit of a partial isomorphism $\partiso{V}{W}{g_{1}}{g_{2}}$ is entirely determined by the type (polypartition) of the composed isomorphism $g_{1}g_{2} : V \to V$ (or, of the composed isomorphism $g_{2}g_{1} : W \to W$, since they have same type). Thus, if we are able to construct an algebra $\mathscr{A}(n,\For_{q})$ out of $\mathscr{I}(n,\For_{q})$, we will readily obtain a candidate for $\mathscr{Z}(n,\For_{q})$ by looking at these orbits. Therefore, the main problem that we shall adress in this Section is the definition of the product of two partial isomorphisms.
\bigskip

\begin{example}
Over the finite field $\For_{5}$, consider the subspaces $V=\Vect((1,0,0),(0,1,0))$ and $W=\Vect((0,1,0),(0,0,1))$ inside $(\For_{5})^{3}$.  With respect to the previously given bases, the matrices $\left(\begin{smallmatrix} 1& 2\\ 0&1 \end{smallmatrix}\right)$ and  $\left(\begin{smallmatrix} 3& 3\\1 & 2\end{smallmatrix}\right)$ define two isomorphisms $g_{1} : V \to W$ and $g_{2} : W \to V$. The type of the partial isomorphism $\partiso{V}{W}{g_{1}}{g_{2}}$ is the polypartition associated to the conjugacy class of the matrix 
$$ \begin{pmatrix} 1& 2\\ 0&1 \end{pmatrix}\,\begin{pmatrix} 3& 3\\ 1&2 \end{pmatrix}=\begin{pmatrix}0 & 2 \\ 1 & 2\end{pmatrix}.$$
This is the Jordan matrix of the irreducible polynomial $X^{2}+3X+3$, so $t\partiso{V}{W}{g_{1}}{g_{2}}=\{X^{2}+3X+3:(1)\}$.
\end{example}\bigskip

A preliminary step for our program is to define properly what is an extension of a partial isomorphism.
\begin{definition}\label{partisodef}
An extension of a partial isomorphism $\partiso{V}{W}{g_{1}}{g_{2}}$ is a partial isomorphism $\partext{V^{+}}{W^{+}}{g_{1}^{+}}{g_{2}^{+}}$ with $V \subset V^{+}$, $W \subset W^{+}$, $(g_{1}^{+})_{|V}=g_{1}$ and $(g_{2}^{+})_{|W}=g_{2}$.\bigskip

\noindent Denote $k=\dim V=\dim W$ and $k^{+}=\dim V^{+}=\dim W^{+}$. The extension is called trivial if one of the following equivalent assertions is satisfied:\vspace{2mm}
\begin{enumerate}
\item Its type is obtained from the type $\bbmu=(\mu(P_{1}),\ldots,\mu(P_{r}))$ of the partial isomorphism $\partiso{V}{W}{g_{1}}{g_{2}}$ by adding parts $1$ to the partition $\mu(X-1)$ (which might have been empty). \vspace{2mm}
\item There are decompositions $V^{+}=V\oplus A$ and $W^{+}=W \oplus B$, such that
$$g_{1}^{+}=g_{1}\oplus \psi \quad;\quad g_{2}^{+}=g_{2}\oplus \psi^{-1}$$
with $\psi$ isomorphism between $A$ and $B$.\vspace{2mm}
\item The induced quotient isomorphisms 
$$\widetilde{g_{1}} : V^{+}/V \to W^{+}/W \quad\text{and}\quad \widetilde{g_{2}} : W^{+}/W \to V^{+}/V$$
are inverse of one another: $\widetilde{g_{1}}\widetilde{g_{2}}=\id_{V^{+}/V}$.\vspace{2mm}
\item Fix any basis $\Ec^{+}=(e_{1},\ldots,e_{k}^{+})$ of $V^{+}$ such that $\Ec=(e_{1},\ldots,e_{k})$ is a basis of $V$. Denote then $\Fc^{+}=(f_{1},\ldots,f_{k}^{+})$ the basis of $W^{+}$ given by $f_{i}=g_{1}^{+}(e_{i})$; the matrix of $g_{1}^{+}$ with respect to these two bases is then $I_{k^{+}}$. One has:
$$\matr_{\Fc^{+},\Ec^{+}}(g_{2}^{+})=\begin{pmatrix} G & P \\ 0 & I_{k^{+}-k}\end{pmatrix},$$
where $G=\matr_{\Ec}(g_{1}g_{2})$, and $P=(G-I_{k})\,R$ with $R$ arbitrary rectangular matrix of size $k \times (k^{+}-k)$.\vspace{2mm}
\end{enumerate}
\end{definition}
\begin{proof} We denote $\partext{V^{+}}{W^{+}}{g_{1}^{+}}{g_{2}^{+}}$ an extension of $\partiso{V}{W}{g_{1}}{g_{2}}$.\vspace{2mm}
\begin{itemize}
\item[$(1) \Rightarrow (2)$] Suppose that $t(g_{1}^{+}g_{2}^{+})=\bbmu \sqcup (X-1 : 1^{k^{+}-k})$. There is a basis $\Ec^{+}=(e_{1},\ldots,e_{k^{+}})$ of $V^{+}$ such that $\Ec=(e_{1},\ldots,e_{k})$ is a basis of $V$, and
\begin{equation}\matr_{\Ec^{+}}(g_{1}^{+}g_{2}^{+})=\begin{pmatrix} \matr_{\Ec} (g_{1}g_{2}) & 0 \\ 0 & I_{k^{+}-k}\end{pmatrix}.\label{trivialmatrix1}\end{equation}
We take for basis $\Fc^{+}$ of $W^{+}$ the images $f_{1},\ldots,f_{k^{+}}$ of the vectors $e_{1},\ldots,e_{k^{+}}$ by $g^{+}_{1}$. Since $(g_{1}^{+})_{|V}=g_{1}$, the $k$ first vectors $f_{1},\ldots,f_{k}$ form a basis $\Fc$ of $W$. Set then $A=\Vect(e_{k+1},\ldots,e_{k^{+}})$ and $B=\Vect(f_{k+1},\ldots,f_{k^{+}})$. By choice of $\Fc^{+}$, one has
\begin{equation}\matr_{\Ec^{+},\Fc^{+}}(g_{1}^{+})=\begin{pmatrix} I_{k} & 0 \\ 0 & I_{k^{+}-k} \end{pmatrix},\label{trivialmatrix2}\end{equation}
whereas
\begin{equation}\matr_{\Fc^{+},\Ec^{+}}(g_{2}^{+})=\begin{pmatrix} \matr_{\Fc,\Ec}(g_{2}) & P \\ 0 & M \end{pmatrix},\label{trivialmatrix3}\end{equation}
where $M$ is invertible and $P$ is \emph{a priori} an arbitrary matrix. However, the matrix in Equation \eqref{trivialmatrix1} is the product of the matrices in \eqref{trivialmatrix2} and \eqref{trivialmatrix3}, so, by identification, $\matr_{\Fc,\Ec}(g_{2})=\matr_{\Ec}(g_{1}g_{2})$, $M=I_{k^{+}-k}$ and $P=0$. The isomorphism $\psi$ is then the one sending $e_{j}$ to $f_{j}$ for $j \in \lle k+1,k^{+}\rre$, and the proof of the implication is done.\vspace{2mm}
\item[$(2) \Rightarrow (1)$] Conversely, with respect to the decomposition $V^{+}=V \oplus A$, one has
$$g_{1}^{+}g_{2}^{+}=(g_{1}\oplus \psi)(g_{2}\oplus \psi^{-1})=g_{1}g_{2}\oplus \id_{A},$$
so $t(g_{1}^{+}g_{2}^{+})=t(g_{1}g_{2})\sqcup (X-1 : 1^{k^{+}-k})$.\vspace{2mm}
\item[$(2) \Leftrightarrow (3)$] Obvious since there are natural isomorphisms $A \simeq V^{+}/V$ and $B \simeq W^{+}/W$.\vspace{2mm}
\item[$(2) \Rightarrow (4)$] For any basis $\Dc^{+}$ adapted to the decomposition $V^{+}=V\oplus A$, 
$$\matr_{\Dc^{+}}(g_{1}^{+}g_{2}^{+})=\begin{pmatrix}G & 0 \\ 
0 & I_{k^{+}-k}\end{pmatrix}.$$
Fix a basis $\Ec^{+}=(e_{1},\ldots,e_{k},e_{k+1},\ldots,e_{k^{+}})$ of $V^{+}$, and another basis adapted to the decomposition $V^{+}=V \oplus A$: $\mathcal{C}^{+}=(e_{1},\ldots,e_{k},c_{k+1},\ldots,c_{k^{+}})$.  One has
$$\matr_{\mathcal{C}^{+}}(\Ec^{+})=\begin{pmatrix} I_{k} & R\\  0 & Q\end{pmatrix},$$
and since this is an invertible matrix, $Q$ has to be invertible. Consider now the basis $\Dc^{+}$ with
$$\matr_{\Dc^{+}}(\mathcal{C}^{+})=\begin{pmatrix} I_{k} & 0\\  0 & Q^{-1}\end{pmatrix};$$
it is again adapted to the decomposition $V^{+}=V \oplus A$. If $R'=Q^{-1}R$, then one has
$$\matr_{\Dc^{+}}(\Ec^{+})=\matr_{\Dc^{+}}(\mathcal{C}^{+})\,\matr_{\mathcal{C}^{+}}(\Ec^{+})=\begin{pmatrix}
I_{k} & R' \\
0 & I_{k^{+}-k}
\end{pmatrix},$$
and therefore,
\begin{align*}\matr_{\Ec^{+}}(g_{1}^{+}g_{2}^{+})&=\matr_{\Ec^{+}}(\Dc^{+})\,\matr_{\Dc^{+}}(g_{1}^{+}g_{2}^{+}) \,\matr_{\Dc^{+}}(\Ec^{+})\\
&=\begin{pmatrix}
I_{k} & -R' \\
0 & I_{k^{+}-k}
\end{pmatrix}\,\begin{pmatrix} G & 0 \\ 0 & I_{k^{+}-k} \end{pmatrix}\,\begin{pmatrix}
I_{k} & R' \\
0 & I_{k^{+}-k}
\end{pmatrix}\\
&=\begin{pmatrix}
G & (G-I_{k}) \,R'\\
0 & I_{k^{+}-k}
\end{pmatrix},\end{align*}
where $G=\matr_{\Ec}(g_{1}g_{2})$. With $f_{i}=g_{1}^{+}(e_{i})$, this matrix is also $\matr_{\Fc^{+},\Ec^{+}}(g_{2}^{+})$, so $(2)\Rightarrow (4)$ is proven.\vspace{2mm}
\item[$(4) \Rightarrow (2)$] From a writing $\matr_{\Fc^{+},\Ec^{+}}(g_{2}^{+})=\left(\begin{smallmatrix}G & (G-I_{k})\,R\\ 0 & I_{k^{+}-k}\end{smallmatrix}\right)$, one can go backwards and look at the basis $\Dc^{+}$ of $V^{+}$ given by 
$$\matr_{\Dc^{+}}(\Ec^{+})=\begin{pmatrix} I_{k} & R \\
0 & I_{k^{+}-k}
\end{pmatrix};$$
the matrix of $g_{1}^{+}g_{2}^{+}$ in this new basis is $\left(\begin{smallmatrix} G & 0 \\ 0 & I_{k^{+}-k}\end{smallmatrix}\right)$. Taking $A=\Vect(d_{k+1},\ldots,d_{k^{+}})$ and $B=g_{1}^{+}(A)$, one gets back the hypothesis $(2)$.\vspace{-6mm}
\end{itemize}
\end{proof}
\bigskip

In the following, we shall need to know the number of trivial extensions of a given partial isomorphism $\partiso{V}{W}{g_{1}}{g_{2}}$ of degree $k$ (the dimension of $V$ and $W$) to spaces $V^{+}$ and $W^{+}$ of dimension $k^{+}$, with $W^{+}$ fixed among the subspaces of $(\For_{q})^{n}$ containing $W$, but $V^{+}$ free. The fourth characterization of trivial extensions will ease a lot this enumeration.
\begin{lemma}\label{importantbijection}
There is a bijection between trivial extensions $\partext{V^{+}}{W^{+}}{g_{1}^{+}}{g_{2}^{+}}$ of a partial isomorphism $\partiso{V}{W}{g_{1}}{g_{2}}$ with $W^{+}$ fixed, and pairs $((e_{k+1},\ldots,e_{k^{+}}),P)$, where:\vspace{2mm}
\begin{enumerate}
\item $(e_{1},\ldots,e_{k^{+}})$ is a completion of a basis $(e_{1},\ldots,e_{k})$ of $V$ into a family of $k^{+}$ linearly independent vectors;\vspace{2mm}
\item $P=(G-I_{k})\,R$ is a rectangular matrix of size $k \times (k^{+}-k)$, where $R$ is arbitrary and $G=\matr_{(e_{1},\ldots,e_{k})}(g_{1}g_{2})$.\vspace{2mm}
\end{enumerate}
\end{lemma}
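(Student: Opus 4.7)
The plan is to leverage characterization (4) from Definition \ref{partisodef}, which identifies trivial extensions with a specific matrix form. As a first step I would fix, once and for all, an auxiliary basis $\Fc^{+}=(f_{1},\ldots,f_{k^{+}})$ of $W^{+}$ extending $\Fc=(f_{1},\ldots,f_{k})$ with $f_{i}=g_{1}(e_{i})$; such an extension exists since $W\subset W^{+}$ has codimension $k^{+}-k$.

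Going from a pair $((e_{k+1},\ldots,e_{k^{+}}),P)$ to a trivial extension: the free family $\Ec^{+}=(e_{1},\ldots,e_{k^{+}})$ is a basis of $V^{+}:=\mathrm{Span}(\Ec^{+})$, which contains $V$ and has dimension $k^{+}$. I would then define $g_{1}^{+}:V^{+}\to W^{+}$ as the unique isomorphism with $g_{1}^{+}(e_{i})=f_{i}$; it extends $g_{1}$ because $f_{i}=g_{1}(e_{i})$ for $i\le k$. Next define $g_{2}^{+}:W^{+}\to V^{+}$ by
$$\matr_{\Fc^{+},\Ec^{+}}(g_{2}^{+})=\begin{pmatrix} G & P \\ 0 & I_{k^{+}-k}\end{pmatrix}.$$
The $G$-block together with the zero block below forces $g_{2}^{+}$ to extend $g_{2}$, using the identity $\matr_{\Fc,\Ec}(g_{2})=G$ that follows from the composition rule $\matr_{\Ec}(g_{1}g_{2})=\matr_{\Fc,\Ec}(g_{2})\,\matr_{\Ec,\Fc}(g_{1})$ and $\matr_{\Ec,\Fc}(g_{1})=I_{k}$. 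Since $P=(G-I_{k})R$, characterization (4) certifies that the resulting extension is trivial.

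In the reverse direction, from a trivial extension $\partext{V^{+}}{W^{+}}{g_{1}^{+}}{g_{2}^{+}}$ I would recover a pair by setting $e_{i}:=(g_{1}^{+})^{-1}(f_{i})$ for $i>k$ and taking $P$ to be the upper-right $k\times(k^{+}-k)$ block of $\matr_{\Fc^{+},\Ec^{+}}(g_{2}^{+})$, where $\Ec^{+}=(e_{1},\ldots,e_{k^{+}})$. Because $g_{1}^{+}$ is an isomorphism and $\Fc^{+}$ a basis of $W^{+}$, the tuple $\Ec^{+}$ is a basis of $V^{+}$, so $(e_{k+1},\ldots,e_{k^{+}})$ indeed completes $(e_{1},\ldots,e_{k})$ to a free family of $k^{+}$ linearly independent vectors in $(\For_{q})^{n}$. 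Characterization (4), applied in the bases $\Ec^{+}$ and $\Fc^{+}$ (in which $g_{1}^{+}$ has matrix $I_{k^{+}}$), guarantees that $P$ is of the required form $(G-I_{k})R$.

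The two maps are mutually inverse, the key identities being $g_{1}^{+}(e_{i})=f_{i}\iff e_{i}=(g_{1}^{+})^{-1}(f_{i})$ and the fact that $g_{2}^{+}$ is determined by its matrix in the prescribed bases. I do not anticipate any serious obstacle: the substantive content has already been packaged into characterization (4), and the lemma is best viewed as a clean reformulation of it for enumeration purposes. The only mild subtlety worth mentioning is that the bijection depends on the auxiliary choice of $\Fc^{+}$; different choices yield different bijections but the same cardinality, which is the quantity actually needed later to compute $E_{q}(n,k^{+},k,k_{1})$.
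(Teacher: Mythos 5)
Your proposal is correct and follows essentially the same route as the paper: fix a basis $\Fc^{+}$ of $W^{+}$ extending a basis of $W$, build the extension from the pair via the matrix $\left(\begin{smallmatrix} G & P \\ 0 & I_{k^{+}-k}\end{smallmatrix}\right)$, recover the pair via $e_{i}=(g_{1}^{+})^{-1}(f_{i})$ and the upper-right block, and invoke characterization (4) of Definition \ref{partisodef} in both directions. Your closing remark that the bijection depends on the auxiliary choice of $\Fc^{+}$ but the cardinality does not is a fair and correct observation, also implicit in the paper.
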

\begin{proof}
In the following, we fix a basis $\Fc^{+}=(f_{1},\ldots,f_{k}^{+})$ of $W^{+}$ such that $\Fc=(f_{1},\ldots,f_{k})$ is a basis of $W$. We then denote $e_{i}=(g_{1}^{+})^{-1}(f_{i})$ for $i \in \lle 1,k \rre$; this basis $\Ec$ of $V$ is also fixed.\bigskip

\noindent Fix a pair $((e_{k+1},\ldots,e_{k^{+}}),P)$ such as in the statement of the lemma.  One defines a trivial extension of the partial isomorphism by setting
\begin{align*}
\Vect(e_{1},\ldots,e_{k^{+}})&=V^{+}\\
\matr_{\Ec^{+},\Fc^{+}}(g_{1}^{+})&=I_{k^{+}}\\
\matr_{\Fc^{+},\Ec^{+}}(g_{2}^{+})&=\begin{pmatrix} G & P \\
0 & I_{k^{+}-k}\end{pmatrix}.
\end{align*}
The fourth characterization of trivial extensions ensures that this extension is indeed trivial. Conversely, given a trivial extension $\partext{V^{+}}{W^{+}}{g_{1}^{+}}{g_{2}^{+}}$, one gets back $e_{i}$ for $i \in \lle k+1,k^{+} \rre$ by setting $e_{i}=(g_{1}^{+})^{-1}(f_{i})$; and then $P$ by looking at the upper-right block of the matrix $\matr_{\Fc^{+},\Ec^{+}}(g_{2}^{+})$.
\end{proof}
\bigskip

\begin{corollary}\label{cleverenumeration}
Let $\partiso{V}{W}{g_{1}}{g_{2}}$ be a partial isomorphism of $(\For_{q})^{n}$. Denote $k=\dim V=\dim W$, and $k_{1}$ the dimension of the set of fixed points of $g_{1}g_{2}$ --- this is also the length of $\mu(X-1)$ if $t(g_{1}g_{2})=\bbmu$. If $W^{+} \supset W$ is a fixed vector subspace of dimension $k^{+}\geq k$, then the number of trivial extensions of $\partiso{V}{W}{g_{1}}{g_{2}}$ with right vector subspace $W^{+}$ and free left vector subspace $V^{+}$ is
\begin{align*}E_{q}(n,k^{+},k,k_{1})&=q^{(k-k_{1})(k^{+}-k)}\,(q^{n}-q^{k})\cdots (q^{n}-q^{k^{+}-1})\\
&=q^{(n+k-k_{1})(k^{+}-k)}\,\frac{(q^{-1})_{n-k}}{(q^{-1})_{n-k^{+}}}.\end{align*}
By symmetry, one obtains of course the same number if $V^{+}$ is fixed but $W^{+}$ is left free.
\end{corollary}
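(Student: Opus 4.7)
The plan is to count directly using the bijection of Lemma \ref{importantbijection}. A trivial extension of $\partiso{V}{W}{g_1}{g_2}$ with fixed right subspace $W^+$ corresponds to a pair consisting of a completion $(e_{k+1},\ldots,e_{k^+})$ of the (fixed) basis $\Ec=(e_1,\ldots,e_k)$ of $V$ into a linearly independent family of $k^+$ vectors in $(\For_q)^n$, together with a matrix $P$ of size $k\times(k^+-k)$ of the form $P=(G-I_k)R$ with $R$ arbitrary in $\mathrm{M}(k\times(k^+-k),\For_q)$, where $G=\matr_\Ec(g_1g_2)$. I would treat these two enumerations separately and then multiply.

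For the first factor, counting completions of a linearly independent family of $k$ vectors in $(\For_q)^n$ to a linearly independent family of $k^+$ vectors is standard: at step $j\in\lle k, k^+-1\rre$, the new vector must lie outside the $q^j$-element subspace spanned by what has been chosen so far, hence there are $q^n-q^j$ choices. This yields $\prod_{j=k}^{k^+-1}(q^n-q^j)$ completions.

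For the second factor, the matrices $P$ of the required form are exactly the elements of the image of the $\For_q$-linear map $R\mapsto (G-I_k)R$ from $\mathrm{M}(k\times(k^+-k),\For_q)$ to itself, so the number of distinct $P$'s is $q^{\mathrm{rank}(G-I_k)\cdot(k^+-k)}$. The key observation is that $\ker(G-I_k)$ is precisely the space of fixed vectors of $g_1g_2$ written in the basis $\Ec$, hence has dimension $k_1=\ell(\mu(X-1))$ where $\bbmu=t(g_1g_2)$. Therefore $\mathrm{rank}(G-I_k)=k-k_1$ and the number of admissible $P$'s equals $q^{(k-k_1)(k^+-k)}$.

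Multiplying and converting into the Pochhammer form via $(q^n-q^j)=q^n(1-q^{j-n})$ for $j=k,\ldots,k^+-1$ — after reindexing by $i=n-j$, the product $\prod_{j=k}^{k^+-1}(1-q^{j-n})$ becomes $(q^{-1})_{n-k}/(q^{-1})_{n-k^+}$ and the factor $q^{n(k^+-k)}$ combines with $q^{(k-k_1)(k^+-k)}$ into $q^{(n+k-k_1)(k^+-k)}$, yielding both announced expressions for $E_q(n,k^+,k,k_1)$. The symmetric statement when $V^+$ is fixed and $W^+$ is free is obtained by applying the same argument to the partial isomorphism $\partiso{W}{V}{g_2}{g_1}$, whose composed automorphism $g_2g_1$ is conjugate to $g_1g_2$ (via $g_1$) and thus has fixed-point subspace of the same dimension $k_1$. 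The only nonroutine point is identifying the kernel of $G-I_k$ with the fixed-point space of $g_1g_2$, which is immediate once the basis convention is fixed.
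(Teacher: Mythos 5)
Your proof is correct and takes essentially the same approach as the paper: invoke Lemma~\ref{importantbijection}, then count the two factors separately (completions of $\Ec$ into a free family of size $k^+$, and the image of $R\mapsto(G-I_k)R$). You supply some additional justification that the paper leaves implicit, namely the identification of $\ker(G-I_k)$ with the fixed-point space of $g_1g_2$ and the resulting rank count, as well as the conjugacy of $g_1g_2$ and $g_2g_1$ for the symmetry claim.
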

\medskip

\begin{proof}
The first factor $q^{(k-k_{1})(k^{+}-k)}$ is the cardinality of the image of the map
\begin{align*}
\mathrm{M}(k \times (k^{+}-k),\For_{q}) & \to \mathrm{M}(k \times (k^{+}-k),\For_{q})\\
R &\mapsto (G-I_{k})\,R,
\end{align*}
and the second factor is the number of possible completions of the basis $\Ec=(e_{1},\ldots,e_{k})$ into a family of linearly independent vectors of size $k^{+}$.
\end{proof}
\bigskip

To be complete, let us state the equivalent of Corollary \ref{cleverenumeration} with fixed left and right subspaces.

\begin{corollary}
The number of trivial extensions of a partial isomorphism $\partiso{V}{W}{g_{1}}{g_{2}}$ with fixed left and right subspaces $V^{+}$ and $W^{+}$ is 
\begin{align*}F_{q}(k^{+},k,k_{1})&=q^{(k-k_{1})(k^{+}-k)}\,(q^{k^{+}}-q^{k})(q^{k^{+}}-q^{k+1})\cdots(q^{k^{+}}-q^{k^{+}-1})\\
&=q^{(k^{+}+k-k_{1})(k^{+}-k)}\,(q^{-1})_{k^{+}-k}.\end{align*}
\end{corollary}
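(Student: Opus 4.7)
The plan is to follow exactly the same pattern as the proof of Corollary \ref{cleverenumeration}, but to refine the final counting step to account for the additional constraint that $V^{+}$ is now also fixed.

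First, I would revisit the bijection of Lemma \ref{importantbijection}: a trivial extension $\partext{V^{+}}{W^{+}}{g_{1}^{+}}{g_{2}^{+}}$ with $W^{+}$ fixed is the data of a family $(e_{k+1},\ldots,e_{k^{+}})$ that completes a chosen basis $\Ec=(e_{1},\ldots,e_{k})$ of $V$ into a linearly independent family of size $k^{+}$, together with a rectangular matrix $P$ of the form $(G-I_{k})R$. Imposing that $V^{+}$ be a prescribed subspace containing $V$ amounts to requiring that the completion $(e_{k+1},\ldots,e_{k^{+}})$ span $V^{+}$ together with $\Ec$, i.e., it should extend $\Ec$ into a basis of $V^{+}$ rather than into an arbitrary free family in $(\For_{q})^{n}$. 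This is the only modification with respect to Corollary \ref{cleverenumeration}.

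Next I would count. The matrix part contributes the same factor $q^{(k-k_{1})(k^{+}-k)}$ as before, since it is the size of the image of $R \mapsto (G-I_{k})R$ on $\mathrm{M}(k\times(k^{+}-k),\For_{q})$, and $k-k_{1}$ is the rank of $G-I_{k}$. The completion part is now a count of ordered bases of $V^{+}$ extending a given basis of the $k$-dimensional subspace $V$: there are $q^{k^{+}}-q^{k}$ choices for $e_{k+1}$ (any vector in $V^{+}\setminus V$), then $q^{k^{+}}-q^{k+1}$ for $e_{k+2}$, and so on up to $q^{k^{+}}-q^{k^{+}-1}$ for $e_{k^{+}}$. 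Multiplying these together gives the first expression claimed for $F_{q}(k^{+},k,k_{1})$.

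Finally, I would reduce this to the Pochhammer form. Writing $q^{k^{+}}-q^{j}=q^{j}(q^{k^{+}-j}-1)$, the product of completion factors becomes
\[
\prod_{j=k}^{k^{+}-1}(q^{k^{+}}-q^{j}) \;=\; q^{k+(k+1)+\cdots+(k^{+}-1)}\,\prod_{i=1}^{k^{+}-k}(q^{i}-1) \;=\; q^{k^{+}(k^{+}-k)}\,(q^{-1})_{k^{+}-k},
\]
using $\prod_{i=1}^{m}(q^{i}-1)=q^{m(m+1)/2}(q^{-1})_{m}$ and combining the two arithmetic progressions of exponents. Multiplying by the matrix factor $q^{(k-k_{1})(k^{+}-k)}$ yields $q^{(k^{+}+k-k_{1})(k^{+}-k)}\,(q^{-1})_{k^{+}-k}$, as announced. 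There is no genuine obstacle here; the only point needing a little care is the exponent bookkeeping in the last simplification, but this is routine.
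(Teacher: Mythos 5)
Your proof is correct, and it takes a slightly different route than the paper. The paper derives $F_{q}$ by first observing that, in the set of trivial extensions with fixed $W^{+}$ only, each admissible $V^{+}$ occurs the same number of times (an isomorphism fixing $V$ and carrying $V^{+}_{1}$ to $V^{+}_{2}$ gives a bijection between the two fibers), and then simply divides the already-computed $E_{q}(n,k^{+},k,k_{1})$ by the number of subspaces of dimension $k^{+}$ containing $V$. You instead go back to the bijection of Lemma \ref{importantbijection} and restrict the completion $(e_{k+1},\ldots,e_{k^{+}})$ to be a basis completion inside the fixed $V^{+}$, which is the correct interpretation since $e_{i}=(g_{1}^{+})^{-1}(f_{i})\in V^{+}$ forces $V^{+}=\Vect(\Ec^{+})$. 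Your direct enumeration is self-contained and avoids the symmetry argument; the paper's quotient argument is shorter given that $E_{q}$ is already in hand and makes the dependence on Corollary \ref{cleverenumeration} explicit. Both give the same product, and your exponent bookkeeping (combining $\sum_{j=k}^{k^{+}-1}j$ with the $q^{m(m+1)/2}$ coming from the Pochhammer conversion to get $q^{k^{+}(k^{+}-k)}$) is accurate.
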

\begin{proof}
In the set of all trivial extensions of $\partiso{V}{W}{g_{1}}{g_{2}}$ with fixed right subspace $W^{+}$, each left subspace $V^{+}$ is obtained the same number of times, because if $V^{+}_{1}$ and $V^{+}_{2}$ are two such subspaces, then one can use an isomorphism between them that fixes $V$ to get a bijection between trivial extensions. Therefore, 
$$F_{q}(k^{+},k,k_{1})=\frac{E_{q}(n,k^{+},k,k_{1})}{\text{number of subspaces of $(\For_{q})^{n}$ containing $V$ and of dimension }k^{+}}.$$
The denominator of this fraction is equal to
$$\frac{(q^{n}-q^{k})(q^{n}-q^{k+1})\cdots(q^{n}-q^{k^{+}-1})}{(q^{k^{+}}-q^{k})(q^{k^{+}}-q^{k+1})\cdots (q^{k^{+}}-q^{k^{+}-1})},$$
as can by seen by first enumerating families $(e_{1},\ldots,e_{k^{+}})$ of linearly independent vectors completing a basis $(e_{1},\ldots,e_{k})$ of $V$, and then counting how many families give the same subspace $V^{+}=\Vect(e_{1},\ldots,e_{k^{+}})$.
\end{proof}\bigskip

\subsection{Combinatorics of the extension operators} The notion of extension of a partial isomorphism being clarified, one can define correctly the product of two partial isomorphisms. Let $\mathscr{A}(n,\For_{q})$ be the $\C$-vector space with basis the set $\mathscr{I}(n,\For_{q})$ of partial isomorphisms over $(\For_{q})^{n}$. Given a partial isomorphism $\partiso{V}{W}{g_{1}}{g_{2}}$, we denote $\mathscr{E}\partiso{V^{+}\uparrow V}{W}{g_{1}}{g_{2}}$ the set of trivial extensions with fixed left vector subspace $V^{+}$, and $\mathscr{E}\partiso{V}{W\uparrow W^{+}}{g_{1}}{g_{2}}$ the set of trivial extensions with fixed right vector subspace $W^{+}$. 
\begin{definition}
The product of two elements of $\mathscr{A}(n,\For_{q})$ is defined by linear extension of the rule
$$\partiso{U}{V}{g_{1}}{g_{2}} \mathrel{*} \partiso{W}{X}{h_{1}}{h_{2}}  = \frac{1}{E_{q}(n,m,k,k_{1})\,\,E_{q}(n,m,l,l_{1})}\,\, \sum \,\partext{U^{+}}{X^{+}}{ g_{1}^{+}h_{1}^{+}}{ h_{2}^{+}g_{2}^{+}},$$
where the sum runs over trivial extensions 
\begin{align*}
&\partext{U^{+}}{V+W}{g_{1}^{+}}{g_{2}^{+}} \,\in\, \mathscr{E}\partiso{U}{V\uparrow(V+W)}{g_{1}}{g_{2}}\\
\text{and }&\partext{V+W}{X^{+}}{h_{1}^{+}}{h_{2}^{+} } \,\in\, \mathscr{E}\partiso{(V+W)\uparrow W}{X}{h_{1}}{h_{2}}.
\end{align*}
The parameters $m$, $k$, $l$ are the dimensions of $V+W$, $V$ and $W$, and $k_{1}$ and $l_{1}$ are the dimensions of the spaces of fixed points of $g_{1}g_{2}$ and $h_{1}h_{2}$.
\end{definition}
\begin{theorem}\label{mainan}
Endowed with the product $*$, $\mathscr{A}(n,\For_{q})$ is an associative complex algebra.
\end{theorem}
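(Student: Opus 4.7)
The plan is to prove Theorem \ref{mainan} by first checking that the product $*$ is well-defined, then reducing associativity to a combinatorial identity about iterated trivial extensions and, equivalently, to the commutativity of certain extension operators on $\mathscr{A}(n,\For_q)$.

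\textbf{Well-definedness.} In each summand of the definition of $\partiso{U}{V}{g_{1}}{g_{2}}*\partiso{W}{X}{h_{1}}{h_{2}}$, the arrows $g_{1}^{+}:U^{+}\to V+W$ and $h_{1}^{+}:V+W\to X^{+}$ compose to an isomorphism between spaces of the common dimension $m=\dim(V+W)$, and likewise in the opposite direction, so each $\partext{U^{+}}{X^{+}}{g_{1}^{+}h_{1}^{+}}{h_{2}^{+}g_{2}^{+}}$ lies in $\mathscr{I}(n,\For_{q})$. By Corollary \ref{cleverenumeration}, $E_{q}(n,m,k,k_{1})$ and $E_{q}(n,m,l,l_{1})$ are exactly the cardinalities of the two sets of extensions appearing in the sum, so the definition amounts to a bilinear average and takes values in $\mathscr{A}(n,\For_q)$.

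\textbf{Reduction to a triple sum.} Given three partial isomorphisms $A,B,C$ with intermediate spaces $V,W,X,Y$, I would identify both $(A*B)*C$ and $A*(B*C)$ with a single weighted sum over compatible triples of trivial extensions $(A^{+},B^{+},C^{+})$ in which the right space of $A^{+}$ equals the left of $B^{+}$ (and contains $V+W$) and the right of $B^{+}$ equals the left of $C^{+}$ (and contains $X+Y$). Expanding $(A*B)*C$: the inner product uses the middle $V+W$ and produces summands with varying right spaces $X^{+}\supseteq X$; the outer product with $C$ then introduces a second middle of the form $X^{+}+Y$, and summing over $X^{+}$ and reorganizing yields the triple sum. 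The analogous expansion of $A*(B*C)$ uses $X+Y$ as its inner middle and a varying $W^{+}\supseteq W$ in the outer middle, and gives the same triple sum by the symmetric computation.

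\textbf{Chain rule for trivial extensions.} The reorganization rests on the fact that the composition of two trivial extensions is a trivial extension: if $P'$ is a trivial extension of $P$ via $V'=V\oplus A$, $W'=W\oplus B$, $\psi:A\to B$, and $P^{++}$ is a trivial extension of $P'$ via $V^{++}=V'\oplus A'$, $W^{++}=W'\oplus B'$, $\psi':A'\to B'$, then $P^{++}$ is a trivial extension of $P$ via the decompositions $V^{++}=V\oplus(A\oplus A')$ and $W^{++}=W\oplus(B\oplus B')$ with isomorphism $\psi\oplus\psi'$. This is immediate from characterization (2) of Definition \ref{partisodef}, and at the enumerative level it translates into the fact that the counts $E_{q}$ and $F_{q}$ factor multiplicatively along chains, so the weight attached to each compatible triple is the same regardless of the association used to produce it.

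\textbf{Main obstacle.} The delicate point is that in the second step of $(A*B)*C$, the further trivial extension is applied to the composition of $A^{+}$ and $B^{+}$ rather than to $A^{+}$ and $B^{+}$ separately, and its shape depends on the varying space $X^{+}$. One must therefore show that every trivial extension of the composition $A^{+}\circ B^{+}$ decomposes, after a choice of intermediate space $M^{+}\supseteq V+W$, as a pair of further trivial extensions of $A^{+}$ and $B^{+}$ sharing this new middle, in such a way that summing over $M^{+}$ reassembles the full normalization. This is the concrete content of the "commutativity of the algebra of extension operators" mentioned at the start of Section \ref{algpartiso}: the left- and right-extension operators on $\mathscr{A}(n,\For_{q})$ commute, and this commutativity, together with the chain rule above, is what identifies the two iterated products with the common triple sum and hence delivers associativity.
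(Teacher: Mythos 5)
Your plan follows the same strategy as the paper (reduce associativity to properties of the extension operators $\mathrm{L}$ and $\mathrm{R}$), but it stops exactly where the real work begins: every one of the three key assertions — that the weights ``factor multiplicatively along chains,'' that the two iterated products collapse to ``the same triple sum,'' and that the extension operators commute — is stated rather than proven, and the second one is not even correct as you phrase it. The terms of $\mathrm{L}^{W}\circ\mathrm{R}^{X}\partiso{U}{V}{g_{1}}{g_{2}}$ with prescribed outer spaces $(Y,Z)$ are \emph{not} all trivial extensions of $\partiso{U}{V}{g_{1}}{g_{2}}$ to $(Y,Z)$ with uniform weight: they are only those satisfying the extra constraints $g_{1}^{++}(U+W)+X=Z$ and $g_{2}^{++}(V+X)+W=Y$, the weight is uniform only within that constrained subset, and the pair $(Y,Z)$ itself carries a non-uniform law $\proba[m]\,\uni_{m,U+W}[Y]\,\uni_{m,V+X}[Z]$. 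Identifying this whole structure and checking that it is symmetric in the left and right entries is the content of Theorem \ref{lrdecomposition}, which requires the computation of the law of $\dim(U^{+}+W)$ (Lemma \ref{complicateproba}), the conditional uniformity statements (Corollary \ref{uniformproba}, Lemmas \ref{lemma3co1} and \ref{lemma3co2}), and a count of compatible trivial extensions. None of this is automatic, and the non-associativity of the naive one-arrow construction (Remark \ref{naive}) shows that an argument at your level of generality cannot succeed: it would ``prove'' associativity there too.

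Two further points. First, your ``chain rule'' ($E_{q}$ and $F_{q}$ multiplicative along chains of nested subspaces) is Proposition \ref{firststepan}; it is true but needs the explicit block-matrix verification that the composition of two extensions of the form $\left(\begin{smallmatrix} G & (G-I)R \\ 0 & I\end{smallmatrix}\right)$ is again of that form with an arbitrary rectangular parameter — matching cardinalities of the two index sets is not enough without exhibiting the bijection. Second, even granting the commutativity $\mathrm{L}^{W}\circ\mathrm{R}^{X}=\mathrm{R}^{X}\circ\mathrm{L}^{W}$, one still needs Lemma \ref{lastlemmaan} and Proposition \ref{RRL} to push the outer extension operator through the inner product $\mathrm{R}^{U}(G)\cdot\mathrm{L}^{T}(H)$ and arrive at the manifestly symmetric expression $\sum \proba[m]\,\uni_{m,T+U}[Y]\,\uni_{m,V+W}[Z]\,\bigl(\mathrm{R}^{Y}_{T}(G)\cdot\Psi_{(U,T;V,W)}^{(Y;Z)}(H)\cdot\mathrm{L}^{Z}_{W}(I)\bigr)$; your ``main obstacle'' paragraph correctly locates this difficulty but offers no mechanism for resolving it.
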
\bigskip

The unity of the algebra is trivially the ``empty'' partial isomorphism $\partiso{\{0\}}{\{0\}}{\id}{\id}$, so the only thing to check in Theorem \ref{mainan} is the associativity of the product, and this is surprisingly difficult. Let us introduce a few more notations. If $V \subset V^{+}$ and $W \subset W^{+}$, denote
\begin{align*}
\mathrm{L}_{V}^{V^{+}}\partiso{V}{W}{g_{1}}{g_{2}} &=\frac{1}{E_{q}(n,k^{+},k,k_{1})}\,\sum_{\mathscr{E}\partiso{V^{+}\uparrow V}{W}{g_{1}\,\,}{g_{2}\,} }\partiso{V^{+}}{W^{+}}{g_{1}^{+}}{g_{2}^{+}} \\
\mathrm{R}_{W}^{W^{+}}\partiso{V}{W}{g_{1}}{g_{2}} &=\frac{1}{E_{q}(n,k^{+},k,k_{1})}\,\sum_{\mathscr{E}\partiso{V}{W\uparrow W^{+}}{g_{1}\,\,}{g_{2}\,} }\partiso{V^{+}}{W^{+}}{g_{1}^{+}}{g_{2}^{+}} 
\end{align*}
the means of the trivial extensions of a given partial isomorphism, with fixed left or right vector subspace. The multiplication rule is then
$$\partiso{U}{V}{g_{1}}{g_{2}} \mathrel{*} \partiso{W}{X}{h_{1}}{h_{2}}  =  \mathrm{R}_{V}^{V+W}\partiso{U}{V}{g_{1}}{g_{2}} \cdot \mathrm{L}_{W}^{V+W}\partiso{W}{X}{h_{1}}{h_{2}},$$
where on the right-hand side the product is the usual composition of arrows $U^{+} \to V+W$ and $V+W \to X^{+}$, or of arrows in the reverse directions. Notice that the operators $\mathrm{L}$ and $\mathrm{R}$ are particular cases of multiplications: 
\begin{align*}
\mathrm{L}_{V}^{V^{+}}\partiso{V}{W}{g_{1}}{g_{2}} &= \partiso{V^{+}}{V^{+}}{\id}{\id}*\partiso{V}{W}{g_{1}}{g_{2}};\\
\mathrm{R}_{W}^{W^{+}}\partiso{V}{W}{g_{1}}{g_{2}} &= \partiso{V}{W}{g_{1}}{g_{2}}*\partiso{W^{+}}{W^{+}}{\id}{\id}.
\end{align*}
\begin{proposition}\label{firststepan}
Consider nested subspaces $W \subset W^{+}\subset W^{++}$. One has $$\mathrm{R}_{W^{+}}^{W^{++}}\circ \mathrm{R}_{W}^{W^{+}} = \mathrm{R}_{W}^{W^{++}},$$ and similarly for the operators $\mathrm{L}$.
\end{proposition}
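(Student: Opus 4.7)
The plan is to show that $\mathrm{R}_{W^+}^{W^{++}} \circ \mathrm{R}_W^{W^+}$ and $\mathrm{R}_W^{W^{++}}$ both sum (with appropriate coefficients) over the same set of trivial extensions of $\partiso{V}{W}{g_1}{g_2}$ with right space $W^{++}$ --- and then match the coefficients. First I would establish a ``composition'' lemma for trivial extensions: if $\partext{V^+}{W^+}{g_1^+}{g_2^+}$ trivially extends $\partiso{V}{W}{g_1}{g_2}$ and $\partext{V^{++}}{W^{++}}{g_1^{++}}{g_2^{++}}$ trivially extends the former, then the latter trivially extends the original. Characterization (2) of Definition \ref{partisodef} makes this immediate: write $V^+ = V \oplus A$, $W^+ = W \oplus B$ with $g_i^+$ diagonal, and $V^{++} = V^+ \oplus A'$, $W^{++} = W^+ \oplus B'$ similarly; then the decomposition $V^{++} = V \oplus (A \oplus A')$, $W^{++} = W \oplus (B \oplus B')$ witnesses triviality.

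Next, the combinatorial heart of the argument: given a trivial extension $\partext{V^{++}}{W^{++}}{g_1^{++}}{g_2^{++}}$ of the original and any fixed $W^+$ with $W \subset W^+ \subset W^{++}$, I claim there is exactly one intermediate trivial extension with right space $W^+$ sitting between them. The uniqueness is forced: any intermediate $\partext{V^+}{W^+}{g_1^+}{g_2^+}$ must satisfy $g_1^+ = g_1^{++}|_{V^+}$ with image in $W^+$ and with $\dim V^+ = \dim W^+$, so $V^+ = (g_1^{++})^{-1}(W^+)$, and then $g_1^+, g_2^+$ are the restrictions. To prove this choice actually produces a trivial extension on both levels, I would use the decomposition $V^{++} = V \oplus A$, $W^{++} = W \oplus B$, $g_1^{++} = g_1 \oplus \psi$, $g_2^{++} = g_2 \oplus \psi^{-1}$; since $W \subset W^+$ one has $W^+ = W \oplus (W^+ \cap B)$, hence $V^+ = V \oplus \psi^{-1}(W^+ \cap B)$, which gives the needed direct sum decompositions at both stages.

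Finally, I would match coefficients. On the left we get the coefficient
\[ \tfrac{1}{E_q(n,k^+,k,k_1)}\cdot \tfrac{1}{E_q(n,k^{++},k^+,k_1+k^+-k)}, \]
where the shift $k_1 \mapsto k_1 + (k^+ - k)$ reflects the fact that an intermediate trivial extension has $\dim \mathrm{Fix}(g_1^+ g_2^+) = k_1 + (k^+-k)$ by characterization (1). Using the closed formula in Corollary \ref{cleverenumeration}, the Pochhammer ratios telescope via the factor $(q^{-1})_{n-k^+}$, and the exponents of $q$ add as $(k-k_1)(k^+-k) + (k-k_1)(k^{++}-k^+) = (k-k_1)(k^{++}-k)$, yielding
\[ E_q(n,k^+,k,k_1)\cdot E_q(n,k^{++},k^+,k_1+k^+-k) = E_q(n,k^{++},k,k_1), \]
which is precisely the coefficient appearing in $\mathrm{R}_W^{W^{++}}$. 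The analogous statement for $\mathrm{L}$ follows by the symmetric argument swapping the roles of the two arrows. The main subtlety to watch out for is the uniqueness of the intermediate, since a priori one could fear several triples $(V^+,g_1^+,g_2^+)$ factorize a given double extension; it is the rigidity $V^+ = (g_1^{++})^{-1}(W^+)$ imposed by having $g_1^+$ be a well-defined arrow $V^+ \to W^+$ that makes the factorization unique.
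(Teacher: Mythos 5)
Your proof is correct, but it is organized differently from the paper's. The paper works entirely through the explicit parametrization of Lemma \ref{importantbijection}: it writes both $\mathrm{R}_{W}^{W^{++}}$ and $\mathrm{R}_{W^{+}}^{W^{++}}\circ \mathrm{R}_{W}^{W^{+}}$ as uniform averages over completions of bases together with matrices $P=(G-I_{k})R$, observes that the two sums have the same number of terms, and then checks by a block-matrix multiplication that every term of the composite is of the form $\left(\begin{smallmatrix} G & (G-I_k)S & (G-I_k)T_3\\ 0 & I & 0 \\ 0&0&I\end{smallmatrix}\right)$, i.e.\ appears in the single-step average. You instead prove a coordinate-free unique-factorization statement — transitivity of trivial extensions via characterization $(2)$, plus existence and uniqueness of the intermediate extension over a prescribed $W^{+}$ forced by $V^{+}=(g_{1}^{++})^{-1}(W^{+})$ — and then verify separately the multiplicativity $E_{q}(n,k^{+},k,k_{1})\,E_{q}(n,k^{++},k^{+},k_{1}+k^{+}-k)=E_{q}(n,k^{++},k,k_{1})$, correctly accounting for the shift in the fixed-space dimension. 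Your route avoids the matrix computation and makes the bijection explicit where the paper leaves it implicit in a cardinality count; the paper's route has the advantage of reusing formula \eqref{RR}, which is needed again later (e.g.\ in Lemma \ref{lastlemmaan}). The only point you pass over quickly is that $W^{+}=W\oplus(W^{+}\cap B)$ in your existence argument; this does hold, by the dimension count $\dim(W^{+}\cap B)\geq \dim W^{+}-\dim W$ combined with $W\cap B=\{0\}$, so there is no gap.
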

\begin{proof}
Even if one already knew the associativity of the product $*$, this would not be obvious, since the product $\partiso{W^{+}}{W^{+}}{\id}{\id}*\partiso{W^{++}}{W^{++}}{\id}{\id}$ is an average of partial isomorphisms between spaces $(W^{+})^{+}$ containing $W^{+}$, and $W^{++}$. Hence,  it is different from $\partiso{W^{++}}{W^{++}}{\id}{\id}$. However, these intermediary spaces $(W^{+})^{+}$ will disappear when one multiplies on the left by a partial isomorphism $\partiso{V}{W}{g_{1}}{g_{2}}$. A similar idea will be used in the proof of Lemma \ref{lastlemmaan}\bigskip

\noindent Fix a basis $(f_{1},\ldots,f_{k^{++}})$ of $W^{++}$, such that $(f_{1},\ldots,f_{k})$ is a basis of $W$ and $(f_{1},\ldots,f_{k^{+}})$ is a basis of $W^{+}$. We then denote $e_{1},\ldots,e_{k}$ the reciprocal images of the vectors $f_{1},\ldots,f_{k}$ by $g_{1}$; and $G=\matr_{\Fc,\Ec}(g_{1}g_{2})$. A convenient way to write $\mathrm{R}_{W}^{W^{++}}\partiso{V}{W}{g_{1}}{g_{2}}$ is as follows:
\begin{align}
\mathrm{R}_{W}^{W^{++}}\partiso{V}{W}{g_{1}}{g_{2}}&= \frac{1}{q^{(k-k_{1})(k^{++}-k)}\,(q^{n}-q^{k})(q^{n}-q^{k+1})\cdots(q^{n}-q^{k^{++}-1})} \nonumber \\
&\quad\times\!\!\!\!\!\!\sum_{\substack{\Ec^{++}\setminus\Ec=(e_{k+1},\ldots,e_{k^{++}})\\ P=(G-I_{k})\,R}} \left(\Vect(\Ec^{++})\,\,\big|\,\, I_{k^{++}}\rightleftarrows \left(\begin{smallmatrix} G & P\\ 0 & I_{k^{++}-k} \end{smallmatrix}\right) \,\,\big|\,\,W^{++} \right).\label{RR}
\end{align}
This equation corresponds to the content of Lemma \ref{importantbijection}. Using the same notations, we get:
\begin{align}
\mathrm{R}_{W}^{W^{+}}\partiso{V}{W}{g_{1}}{g_{2}}&= \frac{1}{q^{(k-k_{1})(k^{+}-k)}\,(q^{n}-q^{k})(q^{n}-q^{k+1})\cdots(q^{n}-q^{k^{+}-1})} \nonumber \\
&\quad\times\!\!\!\!\!\!\sum_{\substack{(e_{k+1},\ldots,e_{k^{+}})\\ M=(G-I_{k})\,S}} \left(\Vect(\Ec^{+})\,\,\big|\,\, I_{k^{+}}\rightleftarrows \left(\begin{smallmatrix} G & M\\ 0 & I_{k^{+}-k} \end{smallmatrix}\right) \,\,\big|\,\,W^{+} \right);\nonumber \\
\mathrm{R}_{W}^{W^{+}}\partiso{V}{W}{g_{1}}{g_{2}}&= \frac{1}{q^{(k-k_{1})(k^{+}-k)}\,(q^{n}-q^{k})(q^{n}-q^{k+1})\cdots(q^{n}-q^{k^{+}-1})}\nonumber  \\
&\quad\times \frac{1}{q^{(k-k_{1})(k^{++}-k^{+})}\,(q^{n}-q^{k^{+}})(q^{n}-q^{k^{+}+1})\cdots(q^{n}-q^{k^{++}-1})} \nonumber \\
&\quad\times\!\!\!\!\!\!\sum_{\substack{(e_{k+1},\ldots,e_{k^{++}})\\M=(G-I_{k})\,S \\ N=(G'-I_{k^{+}})\,T}} \left(\Vect(\Ec^{++})\,\,\big|\,\, I_{k^{+}}\rightleftarrows \begin{pmatrix}\left(\begin{smallmatrix} G & M\\ 0 & I_{k^{+}-k} \end{smallmatrix}\right)& N\\ 0 & I_{k^{++}-k^{+}}\end{pmatrix} \,\,\big|\,\,W^{++} \right);\label{RRR}
\end{align}
where in the last term $G'$ denotes the intermediary matrix. Notice that the number of terms in Equations \eqref{RR} and \eqref{RRR} is the same. Therefore, it suffices to show that each term of \eqref{RRR} appears in \eqref{RR}. This is immediate from the following computation:
\begin{align*}
\begin{pmatrix}\left(\begin{smallmatrix} G & M\\ 0 & I_{k^{+}-k} \end{smallmatrix}\right)& N\\ 0 & I_{k^{++}-k^{+}}\end{pmatrix} &= \begin{pmatrix}\left(\begin{smallmatrix} G & (G-I_{k})\,S\\ 0 & I_{k^{+}-k} \end{smallmatrix}\right)&\left(\left(\begin{smallmatrix} G & (G-I_{k})\,S\\ 0 & I_{k^{+}-k} \end{smallmatrix}\right)-I_{k^{+}}\right)\left(\begin{smallmatrix}T_{1}\\ T_{2}\end{smallmatrix}\right) \\ 0 & I_{k^{++}-k^{+}}\end{pmatrix}\\
&= \begin{pmatrix}\left(\begin{smallmatrix} G & (G-I_{k})\,S\\ 0 & I_{k^{+}-k} \end{smallmatrix}\right)&\left(\begin{smallmatrix} G-I_{k} & (G-I_{k})\,S\\ 0 & 0 \end{smallmatrix}\right)\left(\begin{smallmatrix}T_{1}\\ T_{2}\end{smallmatrix}\right) \\ 0 & I_{k^{++}-k^{+}}\end{pmatrix}\\
&= \begin{pmatrix}G & (G-I_{k})\,S & (G-I_{k})\,T_{3} \\ 0 & I_{k^{+}-k} & 0 \\ 0 & 0 & I_{k^{++}-k^{+}}\end{pmatrix}
\end{align*}
with $T_{3}=T_{1}+ST_{2}$, which is again an arbitrary rectangular matrix.
\end{proof}
\bigskip

In the following, we shall work with slightly more general operators $\mathrm{L}$ and $\mathrm{R}$.
\begin{definition}
For $W$ arbitrary subspace of $(\For_{q})^{n}$ and $\partiso{U}{V}{g_{1}}{g_{2}}$ arbitrary element of $\mathscr{I}(n,\For_{q})$, we set
$$\mathrm{R}^{W}\partiso{U}{V}{g_{1}}{g_{2}}=\mathrm{R}^{V+W}_{V}\partiso{U}{V}{g_{1}}{g_{2}},$$
and similarly for the operators $\mathrm{L}^{W}$. We call these generalized operators $\mathrm{L}^{W}$ and $\mathrm{R}^{W}$ the extension operators; they give rise to well-defined linear endomorphisms of $\mathscr{A}(n,\For_{q})$.
\end{definition} 
\begin{proposition}\label{secondstepan}
Consider two arbitrary subspaces $W$ and $X$ of $(\For_{q})^{n}$. One has 
$$\mathrm{R}^{X}\circ \mathrm{R}^{W}=\mathrm{R}^{W+X},$$
and similarly for the operators $\mathrm{L}$.
\end{proposition}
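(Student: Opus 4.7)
The proof plan is essentially to reduce this statement directly to Proposition \ref{firststepan}. The key point is that although $\mathrm{R}^W$ and $\mathrm{R}^X$ are a priori defined using arbitrary subspaces $W, X$ (not assumed nested with one another), when they are applied in succession to a partial isomorphism, the \emph{right spaces} that appear naturally form a nested chain to which Proposition \ref{firststepan} can be applied verbatim.

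First I would evaluate both sides on an arbitrary basis element $\partiso{U}{V}{g_{1}}{g_{2}}$ of $\mathscr{A}(n,\For_{q})$. By definition,
$$\mathrm{R}^{W}\partiso{U}{V}{g_{1}}{g_{2}} = \mathrm{R}^{V+W}_{V}\partiso{U}{V}{g_{1}}{g_{2}},$$
which is a linear combination of partial isomorphisms of the form $\partiso{U^{+}}{V+W}{g_{1}^{+}}{g_{2}^{+}}$ whose right subspace equals $V+W$. Applying $\mathrm{R}^{X}$ to any such summand, and using the definition again (the right subspace of the summand is now $V+W$, not $V$), one gets
$$\mathrm{R}^{X}\partiso{U^{+}}{V+W}{g_{1}^{+}}{g_{2}^{+}} = \mathrm{R}^{(V+W)+X}_{V+W}\partiso{U^{+}}{V+W}{g_{1}^{+}}{g_{2}^{+}}.$$

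Now comes the crucial observation: the subspaces $V \subset V+W \subset V+W+X$ form a nested chain, so Proposition \ref{firststepan} applies with $W, W^{+}, W^{++}$ replaced by $V,\, V+W,\, V+W+X$. This gives
$$\mathrm{R}^{V+W+X}_{V+W}\circ\mathrm{R}^{V+W}_{V} \;=\; \mathrm{R}^{V+W+X}_{V}.$$
Combined with the previous two displays, this shows $\mathrm{R}^{X}\circ\mathrm{R}^{W}\partiso{U}{V}{g_{1}}{g_{2}} = \mathrm{R}^{V+W+X}_{V}\partiso{U}{V}{g_{1}}{g_{2}}$. On the other hand, associativity of sum of subspaces gives $V+(W+X)=V+W+X$, so by definition
$$\mathrm{R}^{W+X}\partiso{U}{V}{g_{1}}{g_{2}} = \mathrm{R}^{V+(W+X)}_{V}\partiso{U}{V}{g_{1}}{g_{2}} = \mathrm{R}^{V+W+X}_{V}\partiso{U}{V}{g_{1}}{g_{2}},$$
and the two expressions agree. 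The statement for $\mathrm{L}$ is obtained by the symmetric argument, replacing right subspaces by left ones and using the chain $U \subset U+W \subset U+W+X$.

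There is essentially no genuine obstacle here once Proposition \ref{firststepan} is in hand; the only thing to verify is the compatibility between the two definitions of $\mathrm{R}$ (the one indexed by a pair of nested subspaces and the one indexed by a single arbitrary subspace), and this is immediate from $(V+W)+X = V+W+X = V+(W+X)$. In particular, the potentially confusing fact that $\mathrm{R}^{W}$ transports a partial isomorphism with right subspace $V$ into one with right subspace $V+W$ (so that $\mathrm{R}^{X}$ is then applied relative to $V+W$, not $V$) is precisely what makes the nested hypothesis of Proposition \ref{firststepan} available at no cost.
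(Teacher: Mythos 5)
Your proof is correct. Both your argument and the paper's ultimately rest on Proposition \ref{firststepan}, but you organize the deduction differently, and your route is shorter. The paper first establishes an auxiliary reduction: it observes that every partial isomorphism in the output of $\mathrm{R}^{X}\circ\mathrm{R}^{W}$ has right subspace containing $W+X$, hence $\mathrm{R}^{X}\circ \mathrm{R}^{W}=\mathrm{R}^{W+X}\circ \mathrm{R}^{X}\circ\mathrm{R}^{W}$, and then reassociates so that it only remains to prove $\mathrm{R}^{W'}\circ\mathrm{R}^{W}=\mathrm{R}^{W'}$ for \emph{nested} subspaces $W\subset W'$; that nested case is then handled by exactly the computation you perform (rewriting each generalized operator as a fixed-subspace operator along the induced chain of right subspaces and invoking Proposition \ref{firststepan}). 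Your observation is that this reduction is unnecessary: for an arbitrary pair $W,X$, the right subspaces produced along the way already form the nested chain $V\subset V+W\subset (V+W)+X$, so Proposition \ref{firststepan} applies at once and gives $\mathrm{R}^{X}\circ\mathrm{R}^{W}\partiso{U}{V}{g_{1}}{g_{2}}=\mathrm{R}_{V}^{V+W+X}\partiso{U}{V}{g_{1}}{g_{2}}=\mathrm{R}^{W+X}\partiso{U}{V}{g_{1}}{g_{2}}$. The one point your write-up correctly identifies as needing care --- that $\mathrm{R}^{X}$ acts on each summand relative to its new right subspace $V+W$, and that the composition of the fixed-subspace operators distributes over the varying left subspaces $U^{+}$ by linearity --- is indeed the only thing to check, and it is covered by the proof of Proposition \ref{firststepan}, whose sum in Equation \eqref{RRR} already runs over all intermediate extensions. (Incidentally, the paper's own final display writes the right-subspace chain with $U$ in place of $V$, which is a typo; your version has the spaces in the correct slots.)
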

\begin{proof}
For any partial isomorphism $\partiso{U}{V}{g_{1}}{g_{2}}$, if $W\subset V$, then 
$$\mathrm{R}^{W}\partiso{U}{V}{g_{1}}{g_{2}}=\mathrm{R}^{V+W}_{V}\partiso{U}{V}{g_{1}}{g_{2}}=\mathrm{R}^{V}_{V}\partiso{U}{V}{g_{1}}{g_{2}}=\partiso{U}{V}{g_{1}}{g_{2}}.$$
Consider a partial isomorphism appearing in $\mathrm{R}^{X}\circ \mathrm{R}^{W}\partiso{U}{V}{g_{1}}{g_{2}}$. Its right subspace contains both $W$ and $X$ (and $V$), so it must contain $W+X$. Therefore, one can apply again $\mathrm{R}^{W+X}$ without changing the result, and
$$\mathrm{R}^{X}\circ \mathrm{R}^{W} = \mathrm{R}^{W+X}\circ\mathrm{R}^{X}\circ \mathrm{R}^{W}.$$
As a consequence, the proposition will be shown if one proves that for nested subspaces $W \subset W'$, $\mathrm{R}^{W'}\circ \mathrm{R}^{W}=\mathrm{R}^{W'}.$ Indeed, assuming this is true, one has then
\begin{align*}\mathrm{R}^{X}\circ \mathrm{R}^{W} &= \mathrm{R}^{W+X}\circ (\mathrm{R}^{X}\circ \mathrm{R}^{W})\\
&=(\mathrm{R}^{W+X}\circ \mathrm{R}^{X})\circ \mathrm{R}^{W}\\
&=\mathrm{R}^{W+X}\circ \mathrm{R}^{W} \quad\text{since }X \subset W+X,\\
&=\mathrm{R}^{W+X} \qquad\quad\,\,\,\text{since }W \subset W+X.
\end{align*}
Fix two nested subspaces $W \subset W'$, and a partial isomorphism $\partiso{U}{V}{g_{1}}{g_{2}}$. One has 
\begin{align*}
\mathrm{R}^{W'}(\mathrm{R}^{W}\partiso{U}{V}{g_{1}}{g_{2}})&=\mathrm{R}^{W'}(\mathrm{R}^{U+W}_{U}\partiso{U}{V}{g_{1}}{g_{2}}) \\
&=\mathrm{R}^{U+W'}_{U+W}(\mathrm{R}^{U+W}_{U}\partiso{U}{V}{g_{1}}{g_{2}}) \\
&=\mathrm{R}^{U+W'}_{U}\partiso{U}{V}{g_{1}}{g_{2}}\\
&=\mathrm{R}^{W'}\partiso{U}{V}{g_{1}}{g_{2}}
\end{align*}
by using Proposition \ref{firststepan} on the third line.
\end{proof}
\bigskip

This leads us to the main result regarding extension operators:
\begin{theorem}\label{mainstepan}
The algebra of extension operators $\langle \mathrm{L}^{W},\mathrm{R}^{W} \rangle_{W\,\text{subspace of }(\For_{q})^{n}}$ is a commutative subalgebra of $\mathrm{End}(\mathscr{A}(n,\For_{q}))$.
\end{theorem}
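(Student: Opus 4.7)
The plan is to verify commutativity on the two generating families $\{\mathrm{L}^{W}\}$ and $\{\mathrm{R}^{X}\}$. Proposition \ref{secondstepan} immediately gives $\mathrm{L}^{W} \circ \mathrm{L}^{W'} = \mathrm{L}^{W+W'} = \mathrm{L}^{W'} \circ \mathrm{L}^{W}$ and analogously for the $\mathrm{R}$ operators, so the only substantive point to establish is the mixed identity
\[\mathrm{L}^{W} \circ \mathrm{R}^{X} = \mathrm{R}^{X} \circ \mathrm{L}^{W}\]
for arbitrary subspaces $W, X \subset (\For_{q})^{n}$.

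For this I would first reduce to the case $\dim W = \dim X = 1$. Writing $W = W_{1} + \cdots + W_{w}$ and $X = X_{1} + \cdots + X_{x}$ as sums of one-dimensional subspaces, Proposition \ref{secondstepan} yields $\mathrm{L}^{W} = \mathrm{L}^{W_{1}} \circ \cdots \circ \mathrm{L}^{W_{w}}$ and $\mathrm{R}^{X} = \mathrm{R}^{X_{1}} \circ \cdots \circ \mathrm{R}^{X_{x}}$; if every 1-dimensional $\mathrm{L}^{W_{i}}$ commutes with every 1-dimensional $\mathrm{R}^{X_{j}}$, then one slides them past each other one pair at a time to deduce the general case. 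One may further assume $W \not\subseteq U$ and $X \not\subseteq V$ when evaluating on $\partiso{U}{V}{g_{1}}{g_{2}}$, since otherwise the corresponding extension operator is already the identity.

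In this reduced setting, I would expand both $\mathrm{R}^{X} \circ \mathrm{L}^{W}\partiso{U}{V}{g_{1}}{g_{2}}$ and $\mathrm{L}^{W} \circ \mathrm{R}^{X}\partiso{U}{V}{g_{1}}{g_{2}}$ using Lemma \ref{importantbijection} (and its analog with the left space fixed instead of the right). The operator $\mathrm{R}^{X}$ is expanded as a sum indexed by pairs $(e_{k+1}, P)$ with $e_{k+1}$ linearly independent of $U$ and $P = (G - I_{k})\,r$; symmetrically, $\mathrm{L}^{W}$ produces an expansion indexed by data $(f_{k+1}, P')$. After the first operator is applied, whether the second one acts as the identity or genuinely extends the partial isomorphism depends on whether the intermediate right space happens to contain $X$ (respectively whether the intermediate left space contains $W$); this splits the output of each composition into a part of dimension $k+1$ and a part of dimension $k+2$. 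The crux is then a term-by-term comparison: each final partial isomorphism $\partiso{\tilde{U}}{\tilde{V}}{\tilde{g}_{1}}{\tilde{g}_{2}}$ should appear with the same coefficient in both expansions, which reduces to a block-matrix identity in the spirit of the calculation at the end of the proof of Proposition \ref{firststepan}.

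The main obstacle is precisely this non-constant output dimension: one has to track carefully the interplay between the case distinctions in the two orderings and the normalization factors $E_{q}(n,k+1,k,k_{1})$, $E_{q}(n,k+2,k+1,k_{1}+1)$ and $F_{q}$ from Corollary \ref{cleverenumeration} and its analog. Once these constants are verified to match in each case, the explicit matrix computation confirms that the contributions to each $\partiso{\tilde{U}}{\tilde{V}}{\tilde{g}_{1}}{\tilde{g}_{2}}$ agree, establishing the mixed commutation and hence Theorem \ref{mainstepan}.
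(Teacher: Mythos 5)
Your reduction to one--dimensional $W$ and $X$ via Proposition \ref{secondstepan} is legitimate, and it is genuinely not the route the paper takes: the paper proves the mixed commutation for arbitrary $W,X$ in one stroke, by showing that $\mathrm{L}^{W}\circ\mathrm{R}^{X}(I)$ is a mixture over pairs $(Y,Z)$ of a probability law and of operators $\Psi^{(Y;Z)}_{(U,W;V,X)}$ whose final description (Theorem \ref{lrdecomposition}) is manifestly symmetric in the left and right data. That costs three counting lemmas, but the resulting decomposition is reused verbatim in the associativity proof (Proposition \ref{RRL}), which your shortcut would not supply. In your reduced setting the term-by-term comparison does close, and the weights are easy: each final partial isomorphism determines its intermediate uniquely (in $\mathrm{L}^{W}\circ\mathrm{R}^{X}$ the intermediate left space is forced to equal $g_{2}^{++}(V+X)$), so every reachable term of a given dimension carries the same coefficient, namely $1/\bigl(E_{q}(n,k+1,k,k_{1})\,E_{q}(n,k+2,k+1,k_{1}+1)\bigr)$ in the $(k+2)$-dimensional part, and these normalizations agree for the two orders by the left/right symmetry noted in Corollary \ref{cleverenumeration}.

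Be aware, though, that the decisive point is not the normalization bookkeeping you identify as the main obstacle, but the identification of the two supports: a trivial extension $\partext{Y}{Z}{g_{1}^{++}}{g_{2}^{++}}$ of dimension $k+2$ occurs in $\mathrm{L}^{W}\circ\mathrm{R}^{X}(I)$ exactly when $g_{2}^{++}(V+X)+W=Y$, and in $\mathrm{R}^{X}\circ\mathrm{L}^{W}(I)$ exactly when $g_{1}^{++}(U+W)+X=Z$; the Remark following \eqref{strangelr} stresses that each of these sets is strictly smaller than the full set of trivial extensions to $(Y,Z)$, so equality of the two conditions is what you must prove. It does hold, but it comes from characterization (3) of Definition \ref{partisodef} rather than from a block-matrix identity in the spirit of Proposition \ref{firststepan}: the induced quotient maps $\widetilde{g_{1}^{++}}:Y/U\to Z/V$ and $\widetilde{g_{2}^{++}}:Z/V\to Y/U$ are mutually inverse, so the line $\widetilde{g_{2}^{++}}(\overline{X})$ differs from $\overline{W}$ in $Y/U$ precisely when $\overline{X}$ differs from $\widetilde{g_{1}^{++}}(\overline{W})$ in $Z/V$ (bars denoting images in the quotients). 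With that observation made explicit, your argument is complete.
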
\bigskip

Proposition \ref{secondstepan} obviously implies that $\mathrm{R}^{W}\circ \mathrm{R}^{X} =\mathrm{R}^{X}\circ \mathrm{R}^{W} $ and  $\mathrm{L}^{W}\circ \mathrm{L}^{X} =\mathrm{L}^{X}\circ \mathrm{L}^{W} $, so it remains to prove that
\begin{equation} \mathrm{L}^{W}\circ \mathrm{R}^{X}= \mathrm{R}^{X}\circ \mathrm{L}^{W}  \label{hardcommutation}.\end{equation}
The difficulty here is that when applied to an element $\partiso{U}{V}{g_{1}}{g_{2}}$, both sides of \eqref{hardcommutation} yield averages of partial isomorphisms whose left and right subspaces are not fixed. Indeed, these subspaces must contain $U+W$ and $V+X$, but they can be larger. Consequently, the first thing to check will be that both sides induce the same probability measure on pairs $(W',X')$ of subspaces of same dimension. Then, we shall prove the following decomposition:
$$\mathrm{L}^{W}\circ \mathrm{R}^{X} \partiso{U}{V}{g_{1}}{g_{2}}= \sum_{\substack{ U+W \subset Y\\ V+X \subset Z}} \proba[(Y,Z)]\,\,\Psi^{(Y;Z)}_{(U,W;V,X)}\partiso{U}{V}{g_{1}}{g_{2}} ,$$
where $\proba[(Y,Z)]$ is a probability measure, and the $\Psi^{(Y;Z)}_{(U,W;V,X)}$ are operators that yield averages of certain trivial extensions with fixed left and right subspaces. It will be clear from the final formula that the same decomposition holds for the right-hand side of Equation \ref{hardcommutation}, and this will end the proof of Theorem \ref{mainstepan}.
\bigskip

\subsection{Probabilities of  conditioned random trivial extensions}
Let us define  new operators
$$\mathrm{LR}_{(V,W)}^{(V^{+},W^{+})}\partiso{V}{W}{g_{1}}{g_{2}}=\frac{1}{F_{q}(k^{+},k,k_{1})}\sum_{\mathscr{E}\partiso{V^{+}\uparrow V}{W\uparrow W^{+}}{g_{1}\,\,}{g_{2}}} \partext{V^{+}}{W^{+}}{g_{1}^{+}}{g_{2}^{+}},$$
where the sum runs over trivial extensions of $\partiso{V}{W}{g_{1}}{g_{2}}$ with fixed left subspace $V^{+}$ and right subspace $W^{+}$. One has
$$\mathrm{R}_{W}^{W^{+}}\partiso{V}{W}{g_{1}}{g_{2}}=\frac{(q^{k^{+}}-q^{k})\cdots (q^{k^{+}}-q^{k^{+}-1})}{(q^{n}-q^{k})\cdots(q^{n}-q^{k^{+}-1})} \sum_{\substack{V \subset V^{+}\\ \dim V^{+}=\dim W^{+}}} \mathrm{LR}_{(V,W)}^{(V^{+},W^{+})}\partiso{V}{W}{g_{1}}{g_{2}},$$
and similarly for the operators $\mathrm{L}$.\bigskip

\begin{lemma}\label{complicateproba}
Fix two subspaces $U$ and $W$ of $(\For_{q})^{n}$, with $j=\dim U$ and $k=\dim (U+W)$. We choose a random subspace $U^{+}$ of fixed dimension $l \geq j$, uniformly among all such subspaces containing $U$. The law of $m=\dim (U^{+}+W)$ is
$$\proba[m]=q^{(k+l-j-m)(m-n)}\,\frac{(q^{-1})_{n-k}\,(q^{-1})_{n-l}\,(q^{-1})_{k-j}\,(q^{-1})_{l-j}}{(q^{-1})_{k+l-j-m}\,(q^{-1})_{n-m}\,(q^{-1})_{n-j}\,(q^{-1})_{m-k}\,\,(q^{-1})_{m-l}}.$$
\end{lemma}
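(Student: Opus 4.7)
The plan is to reduce to a cleaner counting problem by passing to the quotient modulo $U$, then enumerate directly in two layers before matching Pochhammer factors.

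First, I would replace $(\For_q)^n$ by the quotient $V=(\For_q)^n/U$ of dimension $N=n-j$. The uniform random $U^+$ of dimension $l$ containing $U$ is in bijection with a uniform random subspace $\overline{U^+}=U^+/U$ of dimension $L=l-j$ in $V$, and $\overline W=(W+U)/U$ is a fixed subspace of dimension $K=k-j$. Since $U^++W=U^++W+U$, one has $m-j=\dim(\overline{U^+}+\overline W)$, so setting $M=m-j$ the problem becomes: determine the law of $\dim(X+\overline W)=M$ when $X$ is a uniform $L$-dimensional subspace of $V$.

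Next, the probability is
\[\proba[M]=\frac{\#\{X\subset V\,:\,\dim X=L,\ \dim(X+\overline W)=M\}}{\binom{N}{L}_q},\]
where the denominator is the standard Gaussian binomial. I would compute the numerator by stratifying over the sum space $Y=X+\overline W$, of dimension $M$ and containing $\overline W$. The number of such $Y$ is $\binom{N-K}{M-K}_q$ by the same argument used elsewhere in the paper. For each fixed $Y$, I must then count $X\subset Y$ of dimension $L$ with $X+\overline W=Y$. Setting $d=L+K-M$ (the forced dimension of $X\cap\overline W$), one picks first $S=X\cap\overline W\subset\overline W$ of dimension $d$ in $\binom{K}{d}_q$ ways, and then $X/S$ inside $Y/S$, which must be a complement of $\overline W/S$; since $\dim(Y/S)=M-d=(L-d)+(K-d)$, this is a genuine complement count in a space of dimension equal to the sum of the two complementary dimensions, giving exactly $q^{(L-d)(K-d)}$ choices.

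Putting the pieces together, the numerator is
\[\binom{N-K}{M-K}_{\!q}\cdot \binom{K}{L+K-M}_{\!q}\cdot q^{(M-K)(M-L)}.\]
It then remains to divide by $\binom{N}{L}_q$, expand each Gaussian binomial as $\binom{a}{b}_q=q^{b(a-b)}\frac{(q^{-1})_a}{(q^{-1})_b(q^{-1})_{a-b}}$, and collect exponents. The exponent of $q$ consolidates to $(M-K)(N-M)+L(M-L)-L(N-L)=(K+L-M)(M-N)$, while the Pochhammer factors rearrange into the stated ratio. Translating back via $N=n-j,\ K=k-j,\ L=l-j,\ M=m-j$ yields precisely the formula of the lemma.

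The only real obstacle is the bookkeeping of exponents and Pochhammer symbols in the final simplification; conceptually, the reduction mod $U$ and the two-step enumeration (first the ambient $Y$, then the complement inside $Y/S$) make the calculation essentially mechanical, and the constraints $\max(K,L)\leq M\leq \min(N,K+L)$ automatically encode the support of the law.
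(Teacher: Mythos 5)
Your proof is correct, and it takes a genuinely different route from the paper's. The paper parametrizes the random $U^+$ by a random free family of vectors completing a basis of $U$, observes that $m-k$ is the rank of a sub-block of a uniformly random matrix of full rank, and then computes via the Markov chain $X_k=\mathrm{rank}(v_1,\ldots,v_k)$, obtaining $\proba_{d,q}[X_a=c]$ through a geometric-progression evaluation of the complete homogeneous symmetric polynomials and finally applying Bayes' rule to get the conditional law. Your argument instead passes to the quotient $V=(\For_q)^n/U$, so the quantity to count becomes $\#\{X\subset V : \dim X=L,\ \dim(X+\overline W)=M\}$, which you enumerate directly by stratifying over $Y=X+\overline W$ and then over $S=X\cap \overline W$, the last step being a clean complement count $q^{(L-d)(K-d)}$ in $Y/S$. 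Both are valid: yours is more elementary and closed-form, producing the three Gaussian binomials and the single power of $q$ at once; the paper's Markov-chain machinery is less direct here but is reused verbatim in Corollary~\ref{uniformproba}, Lemma~\ref{lemma3co1} and Lemma~\ref{lemma3co2}, which is presumably why the author set it up this way. Your exponent consolidation is also right, the key observation being $(L{+}K{-}M)(M{-}L)+(M{-}K)(M{-}L)=L(M{-}L)$; combined with $(M{-}K)(N{-}M)-L(N{-}L)$, the total reduces to $(K{+}L{-}M)(M{-}N)=(k{+}l{-}j{-}m)(m{-}n)$ under $K=k{-}j$, $L=l{-}j$, $M=m{-}j$, $N=n{-}j$, and the Pochhammer factors translate term-by-term to the stated ratio.
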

\begin{proof}
Fix a basis $\Ec=(e_{1},\ldots,e_{n})$ of $(\For_{q})^{n}$ such that $(e_{1},\ldots,e_{j})$ is a basis of $U$, and $(e_{1},\ldots,e_{k})$ is a basis of $(U+W)$. To choose a subspace $U^{+}$ of dimension $l$ uniformly among those that contains $U$, it suffices to choose random vectors $(f_{j+1},\ldots,f_{l})$ among the $(q^{n}-q^{j})\cdots(q^{n}-q^{l-1})$ families of vectors such that $(e_{1},\ldots,e_{j},f_{j+1},\ldots,f_{l})$ is of rank $l$, and to take $U^{+}=\Vect(e_{1},\ldots,e_{j},f_{j+1},\ldots,f_{l})$. Denote 
$$A=\matr_{\Ec}(f_{j+1},\ldots,f_{l})=\begin{pmatrix} a_{1,1} &\cdots & a_{1,l-j} \\ 
\vdots & & \vdots \\
a_{j,1} & \cdots & a_{j,l-j}\\
\vdots & & \vdots \\
a_{k,1} & \cdots & a_{k,l-j}\\
\vdots & & \vdots \\
a_{n,1} & \cdots & a_{n,l-j}\\
 \end{pmatrix}.$$
The condition $\dim U^{+}=l$ is equivalent to $\mathrm{rank}(A')=l-j$, where $A'$ is the submatrix of $A$ that consists in the $(n-j)$ last rows. Then, $m=k+p$, where $p=\mathrm{rank}(A'')$ and $A''$ is the submatrix of $A$ that consists in the $(n-k)$ last rows. So, the law of $(m-k)$ is the law of the rank of the $(n-k)$ last rows of a random matrix of size $(l-j)\times (n-j)$, uniformly chosen among those that are of rank $(l-j)$.\bigskip

Thus, our problem can be reformulated as follows. Fix a dimension $d$, and consider the random process $X_{k}=\mathrm{rank}(v_{1},\ldots,v_{k})$, where the $v_{i}$'s are uniform random vectors of $(\For_{q})^{d}$ chosen independently. The process $(X_{k})_{k \geq 0}$ is a Markov chain with transition matrix
$$p(i,i)=\frac{1}{q^{d-i}}\qquad;\qquad p(i,i+1)=1-\frac{1}{q^{d-i}}.$$
\begin{figure}[ht]
\begin{center}
\includegraphics{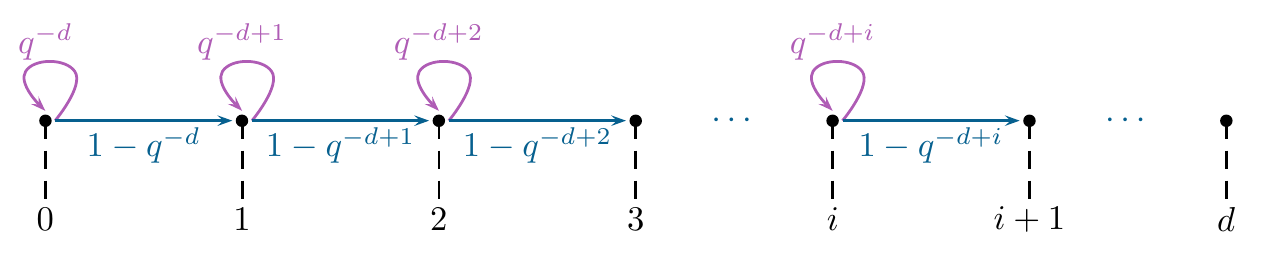}
{\normalsize\caption{The graph of the transitions of the Markov chain $(X_{k})_{k \geq 0}$ in a vector space of dimension $d$ over $\mathbb{F}_{q}$.}}
\end{center}
\end{figure}

We are looking at the probabilities $\proba_{d,q}[X_{a}=c | X_{b}=d]$, with $a \leq b$ and $c \leq d$. Let us first compute $\proba_{d,q}[X_{a}=c]$. To go from rank $0$ to rank $c$ in $a$ steps, one has to go through the $c$ transitions $0 \to 1$, $1 \to 2$, \emph{etc.}, $(c-1) \to c$, which gives a factor
$$\left(1-\frac{1}{q^{d}}\right)\left(1-\frac{1}{q^{d-1}}\right)\cdots\left(1-\frac{1}{q^{d-c+1}}\right)=\frac{(q^{-1})_{d}}{(q^{-1})_{d-c}};$$
and during the other $(a-c)$ transitions, one stays with the same rank. Therefore,
\begin{align*}\proba_{d,q}[X_{a}=c] &= \frac{(q^{-1})_{d}}{(q^{-1})_{d-c}}\,\sum_{\substack{A \,(a-c)\text{-multiset}\\ \text{with elements in } \lle 0,c\rre}}\prod_{x \in A} \frac{1}{q^{d-x}}\\
&=q^{d(c-a)} \frac{(q^{-1})_{d}}{(q^{-1})_{d-c}}\,h_{a-c}(1,q,\ldots,q^{c}),
\end{align*}
where the $h_{k}$'s are the homogeneous symmetric polynomials. We refer to \cite[\S1.2]{Mac95}, where the evaluations of these polynomials on geometric progressions are also computed. One obtains:
$$\proba_{d,q}[X_{a}=c]=q^{(d-c)(c-a)}\,\frac{(q^{-1})_{a}\,(q^{-1})_{d}}{(q^{-1})_{c}\,(q^{-1})_{a-c}\,(q^{-1})_{d-c}}.$$
In particular, 
$\proba_{d,q}[X_{a}=d]=\frac{(q^{-1})_{a}}{(q^{-1})_{a-d}}.$
Now, 
\begin{align*}
\proba_{d,q}[X_{a}=c|X_{b}=d]&=\frac{\proba_{d,q}[X_{b}=d|X_{a}=c] \, \proba_{d,q}[X_{a}=c]}{\proba_{d,q}[X_{b}=d]}\\
&=\frac{\proba_{d-c,q}[X_{b-a}=d-c] \, \proba_{d,q}[X_{a}=c]}{\proba_{d,q}[X_{b}=d]}\\
&= q^{(d-c)(c-a)}\,\frac{(q^{-1})_{a}\,(q^{-1})_{d}\,(q^{-1})_{b-a}\,(q^{-1})_{b-d}}{(q^{-1})_{b}\,(q^{-1})_{c}\,(q^{-1})_{a-c}\,(q^{-1})_{d-c}\,(q^{-1})_{b-a-d+c}} 
\end{align*}
by taking on the second line the quotient of $(\For_{q})^{d}$ by the vector subspace generated by $(v_{1},\ldots,v_{a})$ in order to transform the conditional probability. Finally, we set $a= n-k$, $b=n-j$, $c=p=m-k$, and $d=l-j$:
$$\proba[m]=q^{(k+l-j-m)(m-n)}\,\frac{(q^{-1})_{n-k}\,(q^{-1})_{n-l}\,(q^{-1})_{k-j}\,(q^{-1})_{l-j}}{(q^{-1})_{k+l-j-m}\,(q^{-1})_{n-m}\,(q^{-1})_{n-j}\,(q^{-1})_{m-k}\,\,(q^{-1})_{m-l}}. $$
This expression is non-zero if and only if 
$ \sup(k,l) \leq  m \leq \inf (n, k+l-j);$ moreover, it is symmetric in $k$ and $l$.
\end{proof}
\bigskip

\begin{corollary}\label{uniformproba}
Fix subspaces $U,V,W,X$ with $j=\dim U = \dim V$, $k=\dim (U+W)$ and $l=\dim (V+X)$. One chooses a subspace $U^{+}$ randomly among those containing $U$ and of dimension $l$, and then set $Y=U^{+}+W$ and $Z=(V+X)^{+}$, where $Z$ is again chosen randomly among the subspaces  containing $V+X$ and of dimension $m=\dim Y$. The law of $m$ is given by Lemma \ref{complicateproba}, and conditionally to $m$, the law of $(Y,Z)$ is the uniform law on pairs of spaces of same dimension $m$ and with $U+W \subset Y$ and $V+X \subset Z$. 
\end{corollary}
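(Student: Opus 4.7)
The plan is to split the corollary into three independent pieces.

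First, the marginal law of $m = \dim Y = \dim(U^{+}+W)$ is exactly what Lemma \ref{complicateproba} computes: $U^{+}$ is by definition the uniform random subspace of dimension $l$ containing $U$ appearing in that lemma, and the parameters $j$, $k$, $l$ match. So this first claim will just be a direct quotation. Second, $Z$ is drawn in a separate step given $m$, uniformly among subspaces of dimension $m$ containing $V+X$ and independently of $U^{+}$ (hence of $Y$); so the joint conditional law of $(Y,Z)$ given $m$ automatically factors as the product of the conditional law of $Y$ given $m$ and the uniform law on $\{Z \supset V+X : \dim Z = m\}$.

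The one non-trivial point will be the uniformity of the conditional law of $Y$ given $m$, which I would establish by a symmetry argument. Let $G \subset \GL(n,\For_{q})$ be the subgroup of elements restricting to the identity on $U+W$. Such a $g$ stabilises $U$ and $W$ pointwise, hence acts on $\{U^{+} \supset U : \dim U^{+} = l\}$ and preserves its uniform measure; moreover the map $U^{+} \mapsto U^{+}+W$ is $G$-equivariant since $g(U^{+}+W) = g(U^{+})+W$, and it preserves dimension, so the event $\{\dim Y = m\}$ is $G$-invariant. It follows that the conditional law of $Y$ given $m$ is a $G$-invariant probability measure on the set $\{Y \supset U+W : \dim Y = m\}$.

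To conclude, I would check that $G$ acts transitively on that set: given $Y_{1}, Y_{2}$ two such subspaces, I would pick a basis of $U+W$, complete it into bases of $Y_{1}$ and $Y_{2}$, then extend both into bases of $(\For_{q})^{n}$; the linear map sending the first basis to the second is the identity on $U+W$ (hence lies in $G$) and carries $Y_{1}$ to $Y_{2}$. Since the unique $G$-invariant probability measure on a transitive $G$-set is the uniform one, this forces the conditional law of $Y$ given $m$ to be uniform, and combined with the factorisation of the joint law this finishes the proof. The main potential obstacle is making sure that conditioning on $\{\dim Y = m\}$ does not break the symmetry, but this is automatic because every $g \in G$ preserves the dimension of $U^{+}+W$ pointwise.
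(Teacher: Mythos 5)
Your proposal is correct, and the overall decomposition is the same as the paper's: the law of $m$ is a direct citation of Lemma \ref{complicateproba}; the uniformity and conditional independence of $Z$ are immediate from the two-step construction; and the only real content is the uniformity of the conditional law of $Y=U^{+}+W$ given $m$. Where you diverge is in how that last point is established. The paper lifts everything to free families of vectors: it writes $U^{+}=\Vect(e_{1},\ldots,e_{j},f_{j+1},\ldots,f_{l})$, pushes the random vectors through the quotient maps $(\For_{q})^{n}\to(\For_{q})^{n}/U\to(\For_{q})^{n}/(U+W)$, and reduces the claim to the statement that every family of prescribed rank in a quotient has the same number of preimages (each counted with equal weight, namely $q^{nm}\,\proba_{n,q}[X_{n}=d\,|\,X_{n-m}=p]$ in the paper's notation). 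You instead invoke the pointwise stabilizer $G$ of $U+W$ in $\GL(n,\For_{q})$: it preserves the uniform law on $\{U^{+}\supset U,\ \dim U^{+}=l\}$, the map $U^{+}\mapsto U^{+}+W$ is $G$-equivariant, the event $\{\dim Y=m\}$ is $G$-invariant, and $G$ acts transitively on $\{Y\supset U+W,\ \dim Y=m\}$; uniqueness of the invariant probability on a transitive finite $G$-set then forces uniformity. All of these assertions check out (in particular the transitivity argument by completing bases is standard and correct). Your route is shorter, avoids all enumeration, and makes the mechanism of the uniformity transparent; the paper's route is more in keeping with its explicit counting style and reuses the Markov chain $(X_{k})$ machinery of Lemma \ref{complicateproba}, producing along the way the exact preimage counts — which, however, are not needed for the corollary itself.
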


\begin{proof}
One only needs to verify that conditionally to $m$, the law of $(Y,Z)$ is the uniform law on pairs of spaces containing $U+W$ and $V+X$ (in particular, $Y$ and $Z$ are independent conditionally to their dimension). By construction, $Z=(V+X)^{+}$ is independent of $Y$ conditionally to $m$, and uniformly distributed. Consider then $Y=U^{+}+W$, conditioned by its dimension $m$. We construct $Y$ in the same way as in Lemma \ref{complicateproba}:
\begin{align*}
U&=\Vect(e_{1},\ldots,e_{j})\qquad;\qquad U+W=\Vect(e_{1},\ldots,e_{k})\\
U^{+}&=\Vect(e_{1},\ldots,e_{j},f_{j+1},\ldots,f_{l})
\end{align*}
with $(f_{j+1},\ldots,f_{l})$ uniformly chosen among completions of $(e_{1},\ldots,e_{j})$ into a free family of length $l$. Denote $f_{j+1}^{U},\ldots,f_{l}^{U}$ the images of the random vectors $f_{j+1},\ldots,f_{l}$ by the quotient map
$\pi^{U} : (\For_{q})^{n}  \to (\For_{q})^{n}/U,$
and similarly with $f_{j+1}^{U+W},\ldots,f_{l}^{U+W}$.\bigskip

The family $f_{j+1}^{U},\ldots,f_{l}^{U}$ is a uniform family of $(l-j)$ linearly independent vectors in $(\For_{q})^{n}/U$: indeed, each such family has $q^{(l-j)j}$ preimages by $\pi^{U}$,  and all with the same weight. Denote then $\pi^{U+W}_{U}$ the quotient map $(\For_{q})^{n}/U \to (\For_{q})^{n}/(U+W)$: one has $f^{U+W}_{i}=\pi^{U+W}_{U}(f_{i})$, and on the other hand, to say that $Y|m$ is the uniform random subspace containing $U+W$ and with dimension $m$ is the same as saying that $(f^{U+W}_{j+1},\ldots,f^{U+W}_{l})|m$ is a uniform family of random vectors in $(\For_{q})^{n}/(U+W)$ among those of rank $p=m-k$. Therefore, it suffices to prove the following general statement: if $(e_{1},\ldots ,e_{d})$ is a uniform random family of $d$ linearly independent vectors in a $\For_{q}$-vector space $K$, then knowing the rank $p$ of $(\pi(e_{1}),\ldots,\pi(e_{d}))$ in a quotient $K/H=\pi(K)$, the family $(f_{1},\ldots,f_{d})=(\pi(e_{1}),\ldots,\pi(e_{d}))$ is uniformly distributed among those of rank $p$. Again, this is because each family $(f_{1},\ldots,f_{d})$ with given rank $p$ in $K/H$ has the same number of preimages and all with same weight, namely,
$$q^{nm}\,\proba_{n,q}[X_{n}=d|X_{n-m}=p],$$
where $n=\dim K$, $m=\dim H$, and $(X_{k})_{k \geq 0}$ is the same Markov chain as before.\end{proof}
\bigskip

An important consequence of the previous discussion is that the law of $(Y,Z)$ is the same law on pairs of subspaces of same dimension                                                           $m$ containing $U+W$ and $V+X$ \vspace{2mm}
\begin{enumerate}
\item if one first chooses $U^{+}$ random extension of $U$ of dimension $l$, and then set $Y=U^{+}+W$ and $Z=(V+X)^{+}$\vspace{2mm}
\item or if one first chooses $V^{+}$ random extension of $V$ of dimension $k$, and then set $Z=V^{+}+X$ and $Y=(U+W)^{+}$.\vspace{2mm}
\end{enumerate}
Indeed, this law is $\proba[(Y,Z)]=\proba[m]\,\uni_{m,U+W}[Y] \,\uni_{m,V+X}[Z],$ where $\proba[m]$ is given by Lemma \ref{complicateproba} (remember the symmetry of the roles played by $k$ and $l$), and the $\uni_{m}$'s are the uniform laws on subspaces described by Corollary \ref{uniformproba}. \bigskip

This symmetry plays an essential role in the proof of Theorem \ref{mainstepan}. Fix a partial isomorphism $I=\partiso{U}{V}{g_{1}}{g_{2}}$. One has
\begin{align*}
\mathrm{R}^{X} (I) &= \sum_{\substack{U \subset U^{+},\,\, \dim U^{+} = l}} \uni_{l,U}[U^{+}] \,\,\mathrm{LR}_{(U,V)}^{(U^{+},V+X)}(I)\\
\mathrm{L}^{W}\circ \mathrm{R}^{X}(I) &= \!\sum_{\substack{U \subset U^{+},\,\, \dim U^{+} = l \\ V+X \subset Z,\,\,\dim Z=m}}\!\! \uni_{l,U}[U^{+}] \,\uni_{m,V+X}[Z]\,\,\mathrm{LR}^{(Y,Z)}_{(U^{+},V+X)}\mathrm{LR}_{(U,V)}^{(U^{+},V+X)}(I)
\end{align*}
with $Y=U^{+}+W$ and $m=\dim Y$ on the second line. According to the previous discussion, this is also:
$$\mathrm{L}^{W}\circ\mathrm{R}^{X}(I)=\!\!\!\!\!\!\!\!\!\!\sum_{\substack{\sup(k,l)\leq m \leq \inf(n,k+l-j) \\ \dim Y = \dim Z = m,\,\,U+W \subset Y,\,\,V+X \subset Z\\\dim U^{+}=l,\,\,U \subset U^{+},\,\, Y = U^{+}+W}}\!\!\!\!\!\!\!\!\! \proba[(Y,Z)]\,\proba[U^{+}|(Y,Z)]\,\,\mathrm{LR}^{(Y,Z)}_{(U^{+},V+X)}\mathrm{LR}_{(U,V)}^{(U^{+},V+X)}(I),$$
where $\proba[(Y,Z)]$ is the law on pair on subspaces described by Corollary \ref{uniformproba}. It remains then to understand what is
\begin{equation} \Psi_{(U,W;V,X)}^{(Y;Z)}(I)=\sum_{\dim U^{+}=l,\,\,U \subset U^{+},\,\, Y = U^{+}+W} \proba[U^{+}|(Y,Z)]\,\,\mathrm{LR}^{(Y,Z)}_{(U^{+},V+X)}\mathrm{LR}_{(U,V)}^{(U^{+},V+X)}(I) \label{strangelr} \end{equation}
when $Y$ and $Z$ are fixed.\bigskip

\begin{remark} Notice that $\Psi_{(U,W;V,X)}^{(Y;Z)}(I)$ cannot be the same as $\mathrm{LR}^{(Y,Z)}_{(U,V)}(I)$. A partial isomorphism appearing in \eqref{strangelr} satisfies
$$g_{2}^{++}(V+X)+W = g_{2}^{+}(V+X)+W = U^{+} + W = Y,$$
and in general this is not the case of a trivial extension of $\partiso{U}{V}{g_{1}}{g_{2}}$ to $Y$ and $Z$. Similarly,
\begin{align*}g_{1}^{++}(U+W)+X&=g_{1}(U)+g_{1}^{++}(W)+X=(V+X)+g_{1}^{++}(W)\\
&=g_{1}^{+}(U^{+})+g_{1}^{++}(W)=g_{1}^{++}(Y)=Z,
\end{align*}
which is not in general the case of an extension in $\mathscr{E}\partiso{Y \uparrow U}{V \uparrow Z}{g_{1}}{g_{2}}$. We are actually going to prove that the expression \eqref{strangelr} is the mean of all trivial extensions of $I$ to spaces $Y$ and $Z$ such that $g_{1}^{++}(U+W)+X=Z$ and $g_{2}^{++}(V+X)+W=Y$; see Theorem \ref{lrdecomposition}.
\end{remark}\bigskip

First, we need to understand the distribution of $U^{+}$ knowing $Y$ and $Z$. Obviously, $U^{+}$ is in fact independent from $Z$, so we only need to condition by $Y$. 
\begin{lemma}\label{lemma3co1}
In the previous probabilistic scheme, the law of $U^{+}$ knowing $Y$ is the uniform law on vector subspaces $U^{+}$ satisfying the three conditions
\begin{equation}
\dim U^{+}=l\quad;\quad U \subset U^{+} \subset Y \quad;\quad U^{+}+W=Y.\label{threeconditions}
\end{equation}
The number of such spaces (hence, the inverse of the probability $\proba[U^{+}|Y]$) is
$$
q^{(m-l)(l-j)}\,\frac{(q^{-1})_{k-j}}{(q^{-1})_{m-l}\,(q^{-1})_{k+l-j-m}}.
$$
\end{lemma}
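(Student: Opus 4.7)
The plan is to establish the two parts of the lemma separately: the uniformity of the conditional law of $U^{+}$ given $Y$, and the explicit count of the fiber. For the first part, recall from the construction in Corollary~\ref{uniformproba} that $U^{+}$ is drawn uniformly among the subspaces of $(\For_{q})^{n}$ of dimension $l$ containing $U$, and that $Y = U^{+}+W$ is a deterministic function of $U^{+}$. Conditioning a uniform random variable on the value of a deterministic function of it leaves it uniform on the corresponding fiber, so the conditional law of $U^{+}$ given $Y$ is uniform on the set $\{U^{+}:\ U\subset U^{+},\ \dim U^{+}=l,\ U^{+}+W=Y\}$. The inclusion $U^{+}\subset Y$ is automatic from the last equality, so this fiber coincides exactly with the set of subspaces described by~\eqref{threeconditions}.

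To count this fiber, I would pass to the quotient $\overline{Y}=Y/U$, of dimension $c:=m-j$, and write $\overline{W'} = (U+W)/U$ for the image of $U+W$ in $\overline{Y}$, of dimension $b:=k-j$. Under $U^{+}\mapsto U^{+}/U$ the fiber is in bijection with the subspaces $\overline{U^{+}}\subset \overline{Y}$ of dimension $a:=l-j$ satisfying $\overline{U^{+}}+\overline{W'}=\overline{Y}$. The Grassmann formula forces $\dim(\overline{U^{+}}\cap \overline{W'}) = a+b-c = k+l-j-m$, and I would stratify by this intersection: for each subspace $H\subset \overline{W'}$ of dimension $a+b-c$, the subspaces $\overline{U^{+}}$ with $\overline{U^{+}}\cap \overline{W'}=H$ and $\overline{U^{+}}+\overline{W'}=\overline{Y}$ correspond, via $\overline{U^{+}} \mapsto \overline{U^{+}}/H$, to the complements of $\overline{W'}/H$ (of dimension $c-a$) inside $\overline{Y}/H$ (of dimension $2c-a-b$).

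The total count then factors as the product of the Gaussian binomial $\binom{b}{a+b-c}_{q}$ (for the choice of $H$) and $q^{(c-a)(c-b)}$ (the classical count of complements of a fixed subspace). Combining exponents via $(c-a)(a+b-c) + (c-a)(c-b) = a(c-a)$ yields $q^{a(c-a)}\,(q^{-1})_{b}/\bigl((q^{-1})_{a+b-c}\,(q^{-1})_{c-a}\bigr)$, and substituting $a=l-j$, $b=k-j$, $c=m-j$ recovers the stated formula $q^{(m-l)(l-j)}\,(q^{-1})_{k-j}/\bigl((q^{-1})_{m-l}\,(q^{-1})_{k+l-j-m}\bigr)$. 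The only conceptual step is the stratification by $H$, which is made clean by the fact that the dimension of $\overline{U^{+}}\cap \overline{W'}$ is forced, not merely bounded, by the Grassmann formula; the remaining manipulations are routine $q$-arithmetic with Pochhammer symbols.
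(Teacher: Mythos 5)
Your proposal is correct, and both halves of your argument differ in execution from the paper's proof while following the same overall structure (establish uniformity, then count the fiber after quotienting by $U$). For the uniformity, you invoke the general principle that a uniform random variable conditioned on a deterministic function of itself remains uniform on the fiber; the paper instead verifies uniformity \emph{a posteriori}, by computing $\proba[U^{+}|Y]=\proba[(U^{+},Y)]/\proba[Y]$ explicitly with Bayes' formula, which requires the expression for $\proba[m]$ from Lemma \ref{complicateproba} and the formula for $\uni_{a,b,c}$. Your route is shorter and makes the uniformity transparent, at the cost of not re-deriving the value of $\proba[Y]$, which the paper reuses immediately afterwards in the proof of Lemma \ref{lemma3co2}. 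For the count, the paper counts ordered bases of $G=\pi^{U}(U^{+})$ whose matrix has its last $(b-a)$ rows of full rank, using the Markov chain $(X_{k})$ already set up, and then divides by the number of bases of $G$; you instead stratify the Grassmannian fiber by the (dimension-forced) intersection $\overline{U^{+}}\cap\overline{W'}$ and multiply a Gaussian binomial by the classical count $q^{d(e-d)}$ of complements. Both are elementary and yield the same $q$-Pochhammer expression after the substitution $a=l-j$, $b=k-j$, $c=m-j$; your decomposition is perhaps more geometric, while the paper's keeps all enumerations within the single Markov-chain framework used throughout Section \ref{algpartiso}.
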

\begin{proof}
We start by counting subspaces $U^{+}$ satisfying the three conditions. It is useful to introduce the quotient map $\pi^{U} : Y \to Y/U$; then, the three conditions \eqref{threeconditions} rewrite as
\begin{equation}
\dim \pi^{U}(U^{+})=l-j \quad;\quad \pi^{U}(U^{+})+\pi^{U}(W)=Y/U.\label{quotientconditions}
\end{equation}
So, given two nested spaces $H \subset K$ of respective dimensions $a $ and $b$, the problem is now to count subspaces $G \subset K$ with fixed dimension $c$ and with $G+H=K$. One will then take 
$$a=k-j \quad;\quad b = m-j \quad;\quad c = l-j \quad;\quad G=\pi^{U}(U^{+}) \quad;\quad H=\pi^{U}(W)\quad;\quad K=Y/U.$$
This problem falls again in the setting of the Markov chain $(X_{k})_{k \geq 0}$. Consider vectors $e_{1},\ldots,e_{c}$, and write their matrix in a basis of $K$ whose first vectors form a basis of $H$. Then, setting $G=\Vect(e_{1},\ldots,e_{c})$, one has $G+H=K$ if and only if the $(b-a)$ last rows of the matrix are of rank $(b-a)$, so $G$ satisfies the previous assumptions in
\begin{align*}q^{bc}\,\proba_{c,q}[X_{b-a}=b-a\,\wedge\,X_{b}=c ]&=q^{bc}\,\,\proba_{c,q}[X_{b-a}=b-a]\,\,\proba_{a+c-b,q}[X_{a}=a+c-b]\\
&=q^{bc}\,\frac{(q^{-1})_{a}\,(q^{-1})_{c}}{(q^{-1})_{b-c}\,(q^{-1})_{a+c-b}}
\end{align*}
cases. Dividing this expression by the number $q^{c^{2}}\,(q^{-1})_{c}$ of bases of $G$, and replacing $a,b,c$, one obtains the number
$$
q^{(m-l)(l-j)}\,\frac{(q^{-1})_{k-j}}{(q^{-1})_{m-l}\,(q^{-1})_{k+l-j-m}}
$$
of subspaces satisfying the two ``quotient'' conditions \eqref{quotientconditions}. Finally, $\pi^{U}$ etablishes a bijection between subspaces of $Y$ containing $U$ and subspaces of $Y/U$, whence the formula in the statement of the lemma.\bigskip

Now, we check that $\proba[U^{+}|Y]$ is indeed the inverse of the previous quantity. Write $\mathrm{Comp}(U^{+},Y)$ for the characteristic function of the three compatibility conditions \eqref{threeconditions}. One has
\begin{align*}
\proba[U^{+}|Y]&=\frac{\proba[(U^{+},Y)]}{\proba[Y]} = \frac{\uni_{l,U}[U^{+}]\,\mathrm{Comp}(U^{+},Y)}{\proba[m]\,\uni_{m,U+W}[Y]}\\
&=q^{(l-m)(l-j)}\,\frac{(q^{-1})_{m-l}\,(q^{-1})_{k+l-j-m}}{(q^{-1})_{k-j}}\,\mathrm{Comp}(U^{+},Y)
\end{align*}
by using Lemma \ref{complicateproba} for $\proba[m]$, and twice the formula $\uni_{a,b,c}=\frac{(q^{-1})_{c-b}\,(q^{-1})_{b-a}}{q^{(c-b)(b-a)}\,(q^{-1})_{c-a}}$ for the uniform probability of a random space of dimension $b$ contained in one of dimension $c$, and containing a fixed subspace of dimension $a$.
\end{proof}
\bigskip

To reintroduce some symmetry in Equation \eqref{strangelr}, let us fix $U^{+}$, $Y$ and $Z$, and for $\partext{Y}{Z}{g_{1}^{++}}{g_{2}^{++}}$ appearing in $\mathrm{LR}_{(U^{+},V+X)}^{(Y,Z)}\,\mathrm{LR}_{(U,V)}^{(U^{+},V+X)}(I)$,  consider the space 
$$V^{+}=g_{1}^{++}(U+W).$$ Since $\mathrm{LR}_{(U^{+},V+X)}^{(Y,Z)}\,\mathrm{LR}_{(U,V)}^{(U^{+},V+X)}(I)$ is an average of partial isomorphisms, one can then consider $V^{+}$ as a random subspace of $Z$, with probability conditioned by $U^{+}$, $Y$ and $Z$.
\begin{lemma}\label{lemma3co2}
The law of $V^{+}$ knowing $U^{+}$, $Y$ and $Z$ is the uniform law on subspaces of $Z$ satisfying the three conditions
$$
\dim V^{+}=k\quad;\quad V \subset V^{+} \subset Z \quad;\quad V^{+}+X=Z.
$$
\end{lemma}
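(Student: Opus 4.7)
\emph{Plan of proof.} The composite $\mathrm{LR}^{(Y,Z)}_{(U^+,V+X)} \circ \mathrm{LR}^{(U^+,V+X)}_{(U,V)}$ applied to $I$ is the uniform average over compatible quadruples $(g_1^+, g_2^+, g_1^{++}, g_2^{++})$ of trivial extensions. Restriction from $(Y,Z)$ to $(U^+, V+X)$ identifies $(g_1^+, g_2^+)$ uniquely as $(g_1^{++}|_{U^+}, g_2^{++}|_{V+X})$, and this restriction is a genuine intermediate trivial extension precisely when $g_1^{++}(U^+) = V+X$ (which then implies $g_2^{++}(V+X) = U^+$ via the triviality of the big extension, using characterization (3)). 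Hence the measure on quadruples is the uniform law on the subset of $\mathscr{E}\partiso{Y \uparrow U}{V \uparrow Z}{g_1}{g_2}$ singled out by $g_1^{++}(U^+) = V+X$. Since $V^+ = g_1^{++}(U+W)$ depends only on $g_1^{++}$, and since for any iso $g_1^{++}: Y \to Z$ extending $g_1$ the number of admissible $g_2^{++}$'s is a universal constant depending only on dimensions (namely $q^{(j-j_1)(m-j)}$, where $j_1 = \dim \mathrm{Fix}(g_1g_2)$, by characterization (4) of trivial extensions), the task reduces to showing that the number $N(V^+)$ of isomorphisms $g_1^{++}: Y \to Z$ extending $g_1$ and satisfying $g_1^{++}(U^+) = V+X$, $g_1^{++}(U+W) = V^+$, is independent of the valid subspace $V^+$.

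The three conditions $V \subset V^+$, $\dim V^+ = k$, $V^+ + X = Z$ are all automatic: the first two because $g_1^{++}$ is an isomorphism with $g_1^{++}(U) = V$ and $\dim(U+W) = k$, and the third was established in the remark preceding the lemma. The heart of the count is the identity
\begin{equation}
g_1^{++}(U^+ \cap (U+W)) = g_1^{++}(U^+) \cap g_1^{++}(U+W) = (V+X) \cap V^+, \label{eq:keyint}
\end{equation}
which forces $\dim((V+X) \cap V^+) = k+l-m$ for every valid $V^+$. Combining this with $Y = U^+ + (U+W)$ (since $U \subset U^+$ and $U^+ + W = Y$) and $Z = (V+X) + V^+$ (since $V \subset V^+$ and $V^+ + X = Z$), every such $g_1^{++}$ arises by glueing three compatible pieces: an isomorphism $h_0 : U^+ \cap (U+W) \to (V+X) \cap V^+$ extending $g_1$, an extension $h_1 : U^+ \to V+X$ of $h_0$, and an extension $h_2 : U+W \to V^+$ of $h_0$. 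Any such triple glues to a well-defined linear map on $Y$ (consistency holds because $h_1$ and $h_2$ coincide with $h_0$ on the intersection), and the glued map is an isomorphism onto $Z$: the identity \eqref{eq:keyint} guarantees that no element outside $U^+ \cap (U+W)$ can lie in its kernel.

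Each of the three extension counts depends only on the integers $j, k, l, m$ (via $\dim U = j$, $\dim(U^+ \cap (U+W)) = k+l-m$, $\dim U^+ = l$, $\dim(U+W) = k$), and not on the specific valid $V^+$: each counts isomorphisms between spaces of prescribed dimensions extending a fixed one on a fixed-dimensional subspace. Their product is therefore a universal function of $(j,k,l,m,q)$, and $N(V^+)$ is constant over valid $V^+$, which proves that the conditional distribution of $V^+$ given $U^+, Y, Z$ is the uniform law on the set of valid subspaces. The main technical hurdle is the intersection identity \eqref{eq:keyint}, which simultaneously fixes $\dim((V+X)\cap V^+)$ across all valid $V^+$ and decouples the enumeration into a product of three independent extension problems.
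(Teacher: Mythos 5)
Your proposal is correct, and it takes a genuinely different route from the paper. The paper proves the lemma ``forwards'': it computes the joint probability $\proba[(U^{+},V^{+})]$ by following the two-stage randomization ($\mathrm{R}^{X}$ then $\mathrm{L}^{W}$), tracking the distributions of the auxiliary random subspaces $g_{1}^{+}(U^{+}\cap W')$ and $g_{1}^{++}(W'')$, counting how many such choices produce a given $V^{+}$, and finally checking that the resulting power of $q$ and quotient of Pochhammer symbols match the inverse of the number of admissible subspaces $V^{+}$. You instead work ``backwards'' from the conditional measure: you identify $\mathrm{LR}^{(Y,Z)}_{(U^{+},V+X)}\mathrm{LR}^{(U^{+},V+X)}_{(U,V)}(I)$ as the uniform measure on the set of trivial extensions of $I$ to $(Y,Z)$ satisfying $g_{1}^{++}(U^{+})=V+X$ (the equivalence with the existence of the intermediate extension, and the constraint $g_{2}^{++}(V+X)=U^{+}$, both follow from characterization (3) as you say, and the two normalization constants $F_{q}$ are the same for every intermediate), and then show that the fiber of $(g_{1}^{++},g_{2}^{++})\mapsto V^{+}=g_{1}^{++}(U+W)$ has constant cardinality by glueing $g_{1}^{++}$ along the decomposition $Y=U^{+}+(U+W)$, the count of $g_{2}^{++}$'s per $g_{1}^{++}$ being the constant $q^{(j-j_{1})(m-j)}$. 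Your approach avoids all explicit probability computations and exponent bookkeeping, and it essentially pre-proves the counting lemma that the paper needs immediately afterwards (the enumeration of trivial extensions compatible with $Y$, $Z$, $U^{+}$, $V^{+}$); what it does not deliver is the closed formula for $\proba[(U^{+},V^{+})]$, which the paper's route produces as a by-product but which is not required for the statement of the lemma.
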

\begin{proof}
As in the previous Lemma, we have to show that $\proba[V^{+}|(U^{+},Y,Z)]$ is equal to the inverse of 
$$
q^{(m-k)(k-j)}\,\frac{(q^{-1})_{l-j}}{(q^{-1})_{m-k}\,(q^{-1})_{k+l-j-m}},
$$
and on the other hand,
$$
\proba[V^{+}|(U^{+},Y,Z)]=\frac{\proba[(U^{+},V^{+},Y,Z)]}{\proba[(U^{+},Y,Z)]} = \frac{\proba[(U^{+},V^{+})]\,\mathrm{Comp}(U^{+},Y)\,\mathrm{Comp}(V^{+},Z)}{\proba[m]\,\uni_{m,U+W}[Y]\,\uni_{m,V+X}[Z]\,\proba[U^{+}|Y]}
$$
since $Y$ and $Z$ are entirely determined by $U^{+}$ and $V^{+}$ and by the compatibility conditions. The denominator can be calculated by using the previous results:
$$ \proba[(U^{+},Y,Z)]=q^{(m-n)(m-j)+(l-m)(l-j)}\,\frac{(q^{-1})_{n-m}\,(q^{-1})_{m-l}\,(q^{-1})_{l-j}}{(q^{-1})_{n-j}}.$$
Therefore, the claim of the lemma is equivalent to
\begin{equation}\proba[(U^{+},V^{+})]=q^{(m-n)(m-j)+(l-m)(l-j)+(k-m)(k-j)}\,\frac{(q^{-1})_{n-m}\,(q^{-1})_{m-l}\,(q^{-1})_{m-k}\,(q^{-1})_{k+l-j-m}}{(q^{-1})_{n-j}},\label{probuplusvplus}
\end{equation}
where the probability refers to the random choices of spaces and trivial extensions that are done during the computation of $\mathrm{L}^{W}\circ \mathrm{R}^{X}(I)$.\bigskip

We decompose this succession of random choices as follows, keeping track at each step of the probability to obtain a trivial extension $\partext{Y}{Z}{g_{1}^{++}}{g_{2}^{++}}$ with given subspaces $U^{+}$ and $V^{+}$.\vspace{2mm}
\begin{enumerate}
\item When applying $\mathrm{R}^{X}$, there is a probability $$\frac{(q^{-1})_{n-l}\,(q^{-1})_{l-j}}{q^{(n-l)(l-j)}\,(q^{-1})_{n-j}}$$ to obtain a given space $U^{+}$. Denote then $W'$ a subspace of $W$ such that $U+W=U \oplus W'$; it is of dimension $(k-j)$, and
$$\dim (U^{+}\cap W') = \dim U^{+}+\dim W' - \dim(U^{+}+W)=k+l-j-m.$$
Fix a basis $(e_{1},\ldots,e_{l})$ of $U^{+}$, such that 
\begin{align*}
&(e_{1},\ldots,e_{j}) \text{ is a basis of }U;\\
&(e_{j+1},\ldots,e_{k+l-m}) \text{ is a basis of }U^{+}\cap W'.
\end{align*} The (random) trivial extension $\partext{U^{+}}{V+X}{g_{1}^{+}}{g_{2}^{+}}$ of $\partiso{U}{V}{g_{1}}{g_{2}}$ is obtained by first choosing a completion $(f_{j+1},\ldots,f_{l})=(g_{1}^{+}(e_{j+1}),\ldots,g_{1}^{+}(e_{l}))$ of the basis $(f_{1},\ldots,f_{j})=(g_{1}(e_{1}),\ldots,g_{1}(e_{l}))$ of $V$ into a basis of $V+X$, and then by choosing the matrix of $g_{2}^{+}$. For the computation of $V^{+}$, one only needs to know what is $g_{1}^{+}(U^{+}\cap W')$. This is a subspace of $V+X$ of dimension $(k+l-j-m)$, linearly independent from $g_{1}(U)=V$, and arbitrary among those such subspaces, because its basis $(f_{j+1},\ldots,f_{k+l-m})$ is itself arbitrary and uniformly distributed. There are 
$$\frac{(q^{l}-q^{j})\cdots(q^{l}-q^{k+l-m-1})}{(q^{k+l-j-m}-1)\cdots(q^{k+l-j-m}-q^{k+l-j-m-1})}=\frac{q^{(m+j-k)(k+l-m-j)} \,(q^{-1})_{l-j}}{(q^{-1})_{m-k}\,(q^{-1})_{k+l-j-m}}$$
such subspaces, each with the same probability. So, the extension operator $\mathrm{R}^{X}$ yields the subspace $U^{+}$ and an arbitrary subspace $g_{1}^{+}(U^{+}\cap W')$ of $V+X$ of dimension $(k+l-j-m)$ and linearly independent from $V$, with probability
\begin{equation}
\frac{(q^{-1})_{n-l}\,(q^{-1})_{m-k}\,(q^{-1})_{k+l-j-m}}{q^{(n-l)(l-j)+(m+j-k)(k+l-j-m)}\,(q^{-1})_{n-j}}.\label{intermediarystep}
\end{equation}\vspace{2mm}
\item Let us now apply $\mathrm{L}^{W}$. Fix $W''\subset W'$ a complement of $U^{+} \cap W'$ inside $W'$, so that $\dim W''=m-l$ and
\begin{align*}V^{+}&=g_{1}^{++}(U+W)=g_{1}^{++}(U \oplus W')=V\oplus g_{1}^{++}(W')\\
&=V\oplus g_{1}^{+} (U^{+}\cap W')  \oplus g_{1}^{++}(W'').
\end{align*}
By similar arguments as before, $g_{1}^{++}(W'')$ is an arbitrary subspace of $(\For_{q})^{n}$ that is supplementary to $V+X$ and of dimension $(m-l)$, so it has probability
$$q^{(n+l-m)(l-m)} \frac{(q^{-1})_{n-m}\,(q^{-1})_{m-l}}{(q^{-1})_{n-l}}.$$
From this and Equation \eqref{intermediarystep}, we see that a given choice of $U^{+}$, $g_{1}^{+}(U^{+}\cap W')$ and $g_{1}^{++}(W'')$ has probability
\begin{equation}
q^{(n+l-m)(l-m)+(l-n)(l-j)+(m+j-k)(m+j-k-l)}\,\frac{(q^{-1})_{n-m}\,(q^{-1})_{m-l}\,(q^{-1})_{m-k}\,(q^{-1})_{k+l-j-m}}{(q^{-1})_{n-j}}.\label{intermediarystep2}
\end{equation}
It remains to see how many choices of $g_{1}^{+}(U^{+}\cap W')$ and $g_{1}^{++}(W'')$ yield the same $V^{+}$. The first part 
$$V\oplus g_{1}^{+}(U^{+}\cap W')=g_{1}^{+}(U^{+}\cap(U+W))$$
is a subspace of $V+X$ of dimension $(k+l-m)$, and it does not change if one takes for $g_{1}^{+}(U^{+}\cap W')$ another complement subspace of $V$ inside $g_{1}^{+}(U^{+}\cap(U+W))$. So, one will have to multiply Equation \eqref{intermediarystep2} by the factor $q^{j(k+l-m-j)}$. One will also have to multiply this probability by the factor $q^{(m-l)(k+l-m)}$, which is the number of complement subspaces $g_{1}^{++}(W')$ of $g_{1}^{+}(U^{+}\cap(U+W))$ inside $V^{+}$. So in the end,
$$\proba[(U^{+},V^{+})] = q^{(n+l-m)(l-m)+(l-n)(l-j)+(m+j-k)(m+j-k-l)+j(k+l-m-j)+(m-l)(k+l-m)}\,R_{q},$$
where $R_{q}$ is the quotient of symbols $(q^{-1})_{a}$ appearing in Equation \eqref{intermediarystep2} --- notice that it is also the expected quotient from Formula \eqref{probuplusvplus}. It is now easy to see that the power of $q$ simplifies as $(m-n)(m-j)+(l-m)(l-j)+(k-m)(k-j)$.\qedhere
\end{enumerate}
\end{proof}
\bigskip

Denote $\mathbb{C}_{l,U,W,Y}$ the uniform probability on subspaces $U^{+}$ satisfying the three conditions of compatibility \eqref{threeconditions}. Lemmas \ref{lemma3co1} and \ref{lemma3co2} show that $\Psi_{(U,W;V,X)}^{(Y;Z)}(I)$ is equal to
$$\sum_{\substack{U^{+},V^{+}\\g_{1}^{++},g_{2}^{++}}} \mathbb{C}_{l,U,W,Y}[U^{+}]\,\mathbb{C}_{k,V,X,Z}[V^{+}]\,\proba[(g_{1}^{++},g_{2}^{++})|(Y,Z,U^{+},V^{+})] \,\partext{Y}{Z}{g_{1}^{++}}{g_{2}^{++}}.$$
It remains to see that the last conditional expectation is in fact the uniform probability on trivial extensions with fixed left and right subspaces $Y$ and $Z$, and sending $U+W$ to $V^{+}$ and $V+X$ to $U^{+}$ --- we shall say that these extensions are compatible with $Y$, $Z$, $U^{+}$ and $V^{+}$.\bigskip

We shall use the following counting argument. In Equation \eqref{strangelr}, each partial isomorphism that appears has a subspace $U^{+}$ which is entirely determined, because it is equal to $g_{2}^{++}(V+X)$. Therefore, $\Psi_{(U,W;V,X)}^{(Y,Z)}(I)$ is a linear combination of
$$\frac{q^{(m-l)(l-j)}\,(q^{-1})_{k-j}}{(q^{-1})_{m-l}\,(q^{-1})_{k+l-j-m}}\,F_{q}(m,l,l-j+j_{1})\,F_{q}(l,j,j_{1})=
q^{(m+j-j_{1})(m-j)}\,\frac{(q^{-1})_{k-j}\,(q^{-1})_{l-j}}{(q^{-1})_{k+l-j-m}}
$$
distinct partial isomorphisms, each with the same weight (the inverse of this number of terms). So, $\proba[(g_{1}^{++},g_{2}^{++})|(Y,Z,U^{+},V^{+})]$, which is a probability on trivial extensions compatible with $Y$, $Z$, $U^{+}$ and $V^{+}$, is equal either to $0$ or to the constant
$$P  = \frac{ q^{(m-k)(k-j)+(m-l)(l-j)-(m+j-j_{1})(m-j)}}{(q^{-1})_{m-k}\,(q^{-1})_{m-l}\,(q^{-1})_{k+l-j-m}}.$$
As a consequence, it suffices to show:
\begin{lemma}
The number of trivial extensions compatible with $Y$, $Z$, $U^{+}$ and $V^{+}$ is $$\frac{1}{P}=q^{(m+j-j_{1})(m-j)-(m-k)(k-j)-(m-l)(l-j)}\,(q^{-1})_{m-k}\,(q^{-1})_{m-l}\,(q^{-1})_{k+l-j-m}.$$ 
So, $\proba[(g_{1}^{++},g_{2}^{++})|(Y,Z,U^{+},V^{+})]$ is never equal to $0$ for a compatible extension, and it is the uniform probability on these compatible trivial extensions.
\end{lemma}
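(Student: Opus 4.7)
The plan is to count the compatible trivial extensions directly via the matrix parametrization of Lemma \ref{importantbijection}, working in bases of $Y$ and $Z$ adapted to all the flag data. First, using $V^{+} + (V+X) = V^{+} + X = Z$, I observe that $V^{+} \cap (V+X)$ has dimension $k+l-m$, and symmetrically $U^{+} \cap (U+W) \subset Y$ has dimension $k+l-m$. I then fix a basis $\Fc^{++}=(f_{1},\ldots,f_{m})$ of $Z$ such that $V = \Vect(f_{1},\ldots,f_{j})$, $V^{+}\cap(V+X) = \Vect(f_{1},\ldots,f_{k+l-m})$, $V+X = \Vect(f_{1},\ldots,f_{l})$, and $V^{+} = \Vect(f_{1},\ldots,f_{k+l-m},f_{l+1},\ldots,f_{m})$; the vectors $e_{i} = g_{1}^{-1}(f_{i})$ for $i \leq j$ then form a fixed basis of $U$.

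By Lemma \ref{importantbijection}, trivial extensions of $\partiso{U}{V}{g_{1}}{g_{2}}$ to $(Y,Z)$ correspond to pairs $((e_{j+1},\ldots,e_{m}),R)$, with the $e_{i}$'s completing $(e_{1},\ldots,e_{j})$ to a basis of $Y$ and $R$ an arbitrary $j\times(m-j)$ matrix yielding $P=(G-I_{j})R$. Under the adapted choice of $\Fc^{++}$ and the relation $g_{1}^{++}(e_{i})=f_{i}$, the compatibility conditions $g_{1}^{++}(U^{+}) = V+X$ and $g_{1}^{++}(U+W) = V^{+}$ translate into $\Vect(e_{1},\ldots,e_{l}) = U^{+}$ and $\Vect(e_{1},\ldots,e_{k+l-m},e_{l+1},\ldots,e_{m}) = U+W$; since $\Vect(e_{1},\ldots,e_{k+l-m})$ is automatically $(k+l-m)$-dimensional and contained in $U^{+} \cap (U+W)$, a dimension count forces $\Vect(e_{1},\ldots,e_{k+l-m}) = U^{+} \cap (U+W)$. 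The choice of $(e_{j+1},\ldots,e_{m})$ thus splits into three independent basis-completion problems inside the fixed subspaces $U^{+}\cap(U+W)$, $U^{+}$, and $U+W$.

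Using the elementary formula that the number of completions of a basis of a fixed $a$-dimensional subspace to a basis of a fixed $b$-dimensional superspace equals $q^{b(b-a)}\,(q^{-1})_{b-a}$, applied with $(a,b) = (j,k+l-m)$, $(k+l-m,l)$, and $(k+l-m,k)$, and multiplying by the $q^{(j-j_{1})(m-j)}$ distinct matrices $P=(G-I_{j})R$ (as in the proof of Corollary \ref{cleverenumeration}), I obtain the total count
$$q^{(k+l-m)(k+l-m-j)+l(m-k)+k(m-l)+(j-j_{1})(m-j)}\,(q^{-1})_{k+l-m-j}\,(q^{-1})_{m-k}\,(q^{-1})_{m-l}.$$
The remaining step is a purely algebraic simplification of the exponent, which I expect to expand directly to $(m+j-j_{1})(m-j)-(m-k)(k-j)-(m-l)(l-j)$, yielding exactly $1/P$.

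The two assertions of the lemma then follow at once. The discussion preceding the statement already established that $\proba[(g_{1}^{++},g_{2}^{++})\,|\,(Y,Z,U^{+},V^{+})]$ is either $0$ or equal to the constant $P$ on compatible extensions; since the count of compatible extensions is exactly $1/P > 0$, this conditional distribution cannot vanish and must be the uniform law on compatible extensions. The main potential obstacle I foresee is the careful bookkeeping of the four nested flag compatibilities when constructing the adapted basis $\Fc^{++}$, together with the final exponent arithmetic; both are mechanical once the three basis-completion blocks are identified, and no ingredient beyond Lemma \ref{importantbijection} is needed.
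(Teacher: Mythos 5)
Your proof is correct and takes essentially the same route as the paper's: a direct enumeration of the compatible trivial extensions via an adapted basis and the matrix parametrization of Lemma \ref{importantbijection}, split into the same three basis-completion factors $q^{(k+l-m)(k+l-m-j)}(q^{-1})_{k+l-m-j}$, $q^{l(m-k)}(q^{-1})_{m-k}$, $q^{k(m-l)}(q^{-1})_{m-l}$ plus the factor $q^{(j-j_1)(m-j)}$ for $g_2^{++}$. The only (cosmetic) difference is that you fix an adapted basis of $Z$ and count preimages in $Y$, whereas the paper fixes an adapted basis of $Y$ and counts images in $Z$; the exponent identity you leave as a mechanical check does indeed hold.
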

\begin{proof}
With the same notations as in Lemma \ref{lemma3co1}, one chooses a basis $(e_{1},\ldots,e_{m})$ of $Y$ such that:
\begin{align*}
&(e_{1},\ldots,e_{j}) \text{ is a basis of }U;\\
&(e_{1},\ldots,e_{l}) \text{ is a basis of }U^{+};\\
&(e_{l+1},\ldots,e_{m})\text{ is a basis of }W'';\\
&(e_{m+j-k+1},\ldots,e_{l})\text{ is a basis of }U^{+}\cap W'; \\
&(e_{m+j-k+1},\ldots,e_{m})\text{ is a basis of }W'. 
\end{align*}
Fix a compatible trivial extension, and denote $f_{i}=g_{1}^{++}(e_{i})$.\vspace{2mm}
\begin{enumerate}
\item The first $j$ vectors $(f_{1},\ldots,f_{j})$ are fixed since $(g_{1}^{++})_{|U}=g_{1}$, so there is no choice there.\vspace{2mm}
\item The $(k+l-j-m)$ vectors $(f_{m+j-k+1},\ldots,f_{l})$ form a basis of a supplementary of $V$ in $V^{+}\cap(V+X)$; here one has 
$$
(q^{k+l-m}-q^{j})\cdots(q^{k+l-m}-q^{k+l-m-1})=q^{(k+l-m)(k+l-j-m)}\,(q^{-1})_{k+l-j-m}
$$
possibilities.\vspace{2mm}
\item The $(m-k)$ vectors $(f_{j+1},\ldots,f_{m+j-k})$ form a basis of a complement subspace of $V^{+}\cap(V+X)$ inside $V+X$; this gives $q^{l(m-k)}\,(q^{-1})_{m-k}$ possibilities.\vspace{2mm}
\item Finally, the $(m-l)$ vectors $(f_{l+1},\ldots,f_{m})$ form a basis of a complement subspace  of $V^{+}\cap(V+X)$ inside $V^{+}$; this gives $q^{k(m-l)}\,(q^{-1})_{m-l}$ possibilities.\vspace{2mm}
\end{enumerate}
So, there are $$q^{k(m-l)+l(m-k)+(k+l-m)(k+l-j-m)}\,(q^{-1})_{m-k}\,(q^{-1})_{m-l}\,(q^{-1})_{k+l-j-m}$$ possibilities for $g_{1}^{++}$. Then, with respect to the two bases $\Ec$ and $\Fc$ previously chosen, Condition $(4)$ in Definition \ref{partisodef} ensures that a trivial extension of $\partiso{U}{V}{g_{1}}{g_{2}}$ to spaces $Y$ and $Z$ is given by a matrix
$$\matr_{\Fc,\Ec}(g_{2}^{++})=\begin{pmatrix}
G & (G-I_{m-j})\,R \\
0 &I_{m-j}
\end{pmatrix},$$
with $R$ rectangular matrix. This rectangular matrix $R$ can really be chosen arbitrary, because the conditions of compatibility have all been ensured by the choice of $g_{1}^{++}$. So, there are $q^{(m-j)(j-j_{1})}$ possibilities for $(G-I_{m-j})\,R$. This last factor multiplied by the previous quantity gives indeed
$$q^{(m+j-j_{1})(m-j)-(m-k)(k-j)-(m-l)(l-j)}\,(q^{-1})_{m-k}\,(q^{-1})_{m-l}\,(q^{-1})_{k+l-j-m}$$
distinct trivial extensions that are compatible with $Y$, $Z$, $U^{+}$ and $V^{+}$.
\end{proof}
\bigskip

Thus, we have proved the following decomposition theorem for $\mathrm{L}^{W}\circ \mathrm{R}^{X}$:
\begin{theorem}\label{lrdecomposition}
If $I=\partiso{U}{V}{g_{1}}{g_{2}}$, then 
\begin{align*}
\mathrm{L}^{W}\circ \mathrm{R}^{X}(I)&= \sum \proba[m]\,\uni_{m,U+W}[Y]\,\uni_{m,V+X}[Z]\,\Psi_{(U,W;V,X)}^{(Y,Z)}(I)\\
\Psi_{(U,W;V,X)}^{(Y,Z)}(I) &= \sum \mathbb{C}_{l,U,W,Y}[U^{+}]\,\mathbb{C}_{k,V,X,Z}[V^{+}]\,\mathbb{D}_{\substack{U,U^{+},W\\V,V^{+},X}}(g_{1}^{++},g_{2}^{++})\,\partext{Y}{Z}{g_{1}^{++}}{g_{2}^{++}},
\end{align*}
where:\vspace{2mm}
\begin{itemize}
\item $m$ runs over $\lle \sup(k,l),\inf(n,k+l-j)\rre$, and it has probability $\proba[m]$ given by Lemma \ref{complicateproba};\vspace{2mm}
\item $(Y,Z)$ runs over pair of subspaces of dimension $m$ containing respectively $U+W$ and $V+X$, and it has uniform probability $\uni_{m,U+W}[Y]\,\uni_{m,V+X}[Z]$ among these pairs of subspaces;\vspace{2mm}
\item $U^{+}$ (respectively, $V^{+}$) runs over subspaces satisfying the three conditions of Lemma \ref{lemma3co1} (respectively, Lemma \ref{lemma3co2}), and it has uniform probability $\mathbb{C}_{l,U,W,Y}[U^{+}]$ (resp., $\mathbb{C}_{k,V,X,Z}[V^{+}]$) among them;\vspace{2mm}
\item and $(g_{1}^{++},g_{2}^{++})$ runs over trivial extensions of $(g_{1},g_{2})$ to $Y$ and $Z$ with the additional constraints $$g_{1}^{++}(U+W)=V^{+} \qquad;\qquad g_{2}^{++}(V+X)=U^{+}\qquad;$$ and it has uniform probability $\mathbb{D}_{\substack{U,U^{+},W\\V,V^{+},X}}(g_{1}^{++},g_{2}^{++})$ among these trivial extensions.\vspace{2mm}
\end{itemize}
\end{theorem}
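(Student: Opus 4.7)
The plan is to assemble the preceding probabilistic lemmas into the stated decomposition, working from the outermost averaging to the innermost. First I would unfold the composition by the definitions of the extension operators: applying $\mathrm{R}^X$ to $I$ gives
\[
\mathrm{R}^X(I) = \sum_{U \subset U^+,\, \dim U^+ = l} \uni_{l,U}[U^+]\, \mathrm{LR}^{(U^+,V+X)}_{(U,V)}(I),
\]
and then applying $\mathrm{L}^W$ yields a joint sum over $U^+$ and the trivial-extension data of $V+X$ to a space $Z$ of dimension $m := \dim(U^++W)$. Setting $Y := U^++W$, Corollary \ref{uniformproba} identifies the induced marginal law of the pair $(Y,Z)$ as $\proba[m]\,\uni_{m,U+W}[Y]\,\uni_{m,V+X}[Z]$, with $\proba[m]$ given explicitly by Lemma \ref{complicateproba}. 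The key leverage here is the symmetry noted after that corollary: the same outer marginal would arise from $\mathrm{R}^X \circ \mathrm{L}^W$, so fixing $(Y,Z)$ separates the "outer" part of the decomposition from the "inner" sum, which is by construction $\Psi^{(Y,Z)}_{(U,W;V,X)}(I)$ as in \eqref{strangelr}.

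Next I would expand $\Psi^{(Y,Z)}_{(U,W;V,X)}(I)$ by further conditioning on the auxiliary subspaces $U^+$ and $V^+ := g_1^{++}(U+W)$. Lemma \ref{lemma3co1} shows that conditionally on $Y$ (and independently of $Z$) the subspace $U^+$ is uniformly distributed on those subspaces satisfying the three compatibility conditions, giving the factor $\mathbb{C}_{l,U,W,Y}[U^+]$. Lemma \ref{lemma3co2} then identifies the conditional law of $V^+$ given $(U^+,Y,Z)$ as the uniform law $\mathbb{C}_{k,V,X,Z}[V^+]$; in particular $V^+$ turns out to be independent of $U^+$ given $(Y,Z)$, which restores the symmetry between left and right and will be what makes the final formula manifestly symmetric.

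The remaining step is to identify the conditional distribution of the pair $(g_1^{++},g_2^{++})$ given $(Y,Z,U^+,V^+)$. Since in \eqref{strangelr} the subspaces $U^+$ and $V^+$ are determined by the extension via $U^+ = g_2^{++}(V+X)$ and $V^+ = g_1^{++}(U+W)$, this conditional law is supported on the set of trivial extensions compatible with $(Y,Z,U^+,V^+)$. To conclude that it is the uniform law $\mathbb{D}$ on this set, I would invoke the counting lemma following \eqref{strangelr}: decomposing a choice of $g_1^{++}$ by successively picking bases of $V^+\cap(V+X)/V$, of a complement of $V^+\cap(V+X)$ in $V+X$, and of a complement of $V^+\cap(V+X)$ in $V^+$, and then parameterizing $g_2^{++}$ by the free rectangular matrix $R$ via Definition \ref{partisodef}(4), one counts exactly $1/P$ compatible extensions where $P$ is the common weight each extension receives in the expansion of $\Psi$.

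The main obstacle is the last step: one must check not only that $\Psi^{(Y,Z)}_{(U,W;V,X)}(I)$ is supported on compatible extensions, but that every compatible extension appears with the \emph{same} weight, so that the conditional law is genuinely uniform rather than merely supported on the compatible set. This is the delicate bookkeeping: one must track the powers of $q$ and Pochhammer symbols coming from the three nested uniform choices ($U^+$ inside $Y$, $V^+$ inside $Z$, and the trivial-extension parameters) and verify that they collapse to a constant $P$ on every atom. The counting lemma precisely performs this collapse, and once the uniform structure of $\mathbb{D}$ is established, assembling the four uniform factors into the statement of the theorem is automatic.
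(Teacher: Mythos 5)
Your proposal is correct and follows essentially the same route as the paper: unfold $\mathrm{L}^W\circ\mathrm{R}^X$ via the definition of the extension operators, identify the marginal law of $(Y,Z)$ through Corollary \ref{uniformproba}, condition successively on $U^+$ and $V^+$ via Lemmas \ref{lemma3co1} and \ref{lemma3co2}, and force uniformity of the law of $(g_1^{++},g_2^{++})$ by the counting lemma showing that the $1/P$ compatible extensions exhaust a conditional probability whose atoms are each $0$ or $P$. You also correctly single out the genuinely delicate point, namely that uniformity of $\mathbb{D}$ must be deduced from the equal-weight-or-zero structure of \eqref{strangelr} together with the exact count of compatible extensions, which is precisely how the paper closes the argument.
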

\noindent Since this description is symmetric in the entries of $I$ on the left and on the right, one obtains the same expansion for $\mathrm{R}^{X}\circ \mathrm{L}^{W} (I)$, so Theorem \ref{mainstepan} is true.
\bigskip

\subsection{Proof of the associativity}
Fix three partial isomorphisms $G=\partiso{S}{T}{g_{1}}{g_{2}}$, $H=\partiso{U}{V}{h_{1}}{h_{2}}$ and $I=\partiso{W}{X}{i_{1}}{i_{2}}$, of respective dimensions $j_{G}$, $j_{H}$ and $j_{L}$. The last step before the proof of Theorem \ref{mainan} is a more concrete description of
$\mathrm{R}^{W}(\mathrm{R}^{U}(G)\cdot \mathrm{L}^{T}(H))$, where $\cdot$ indicates that one takes the product of two partial isomorphisms such that the right subspace of the left-hand side corresponds to the left subspace of the right-hand side (this restriction $\cdot$ of the product $*$ is clearly associative). Recall the description 
$$\mathrm{R}^{W}\mathrm{L}^{T}(H)=\sum \proba[m]\,\uni_{m,T+U}[Y]\,\uni_{m,V+W}[Z]\,\Psi_{(U,T;V,W)}^{(Y;Z)}(H),$$
where $\proba[m]$ is an explicit probability on integers between $\max(\dim(T+U),\dim(V+W))$ and $\min(n,\dim(T+U)+\dim(V+W)-j_{H})$, see Lemma \ref{complicateproba}.
\begin{proposition}\label{RRL}
With the same notations,
$$\mathrm{R}^{W}(\mathrm{R}^{U}(G)\cdot \mathrm{L}^{T}(H))=\sum \proba[m]\,\uni_{m,T+U}[Y]\,\uni_{m,V+W}[Z]\,\left(\mathrm{R}^{Y}_{T}(G)\cdot\Psi_{(U,T;V,W)}^{(Y;Z)}(H)\right).$$
\end{proposition}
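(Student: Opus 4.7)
The plan is to expand the left-hand side directly and reorganize the resulting triple average by introducing an auxiliary intermediate subspace $Y$. First, unfolding the definitions, $\mathrm{R}^U(G)\cdot \mathrm{L}^T(H)$ is a weighted sum over pairs $(G^+, H^+)$ of dot products $G^+\cdot H^+$, where $G^+$ is a trivial extension of $G$ with right subspace $T+U$ and varying left subspace $S^+$, and $H^+$ is a trivial extension of $H$ with left subspace $T+U$ and varying right subspace $V^+$. Applying $\mathrm{R}^W$ adds an outer average over all trivial extensions of each $G^+\cdot H^+$ with right subspace $V^+ + W$ of dimension $m := \dim(V^+ + W)$; in total the left-hand side is a triple sum indexed by $(G^+, H^+, (G^+\cdot H^+)^{++})$.

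The key step is a reindexing: every trivial extension of $G^+\cdot H^+$ to $(S^{++}, V^+ + W)$ factors, after the choice of an intermediate subspace $Y$ of dimension $m$ with $T+U \subset Y$, as a dot product $G^{++}\cdot H^{++}$, where $G^{++}$ is a trivial extension of $G^+$ (hence, by Proposition \ref{firststepan}, of $G$) with right subspace $Y$, and $H^{++}$ is a trivial extension of $H^+$ (hence of $H$) with left subspace $Y$ and right subspace $V^+ + W$. Swapping the order of summation and using that composition of trivial extensions is again a trivial extension, the sum over $(G^+, G^{++})$ collapses to a sum over trivial extensions of $G$ directly with right subspace $Y$, and the sum over $(H^+, H^{++})$ collapses to a sum over trivial extensions of $H$ to $(Y, Z)$ where $Z := V^+ + W$, subject to the compatibility conditions inherited from the factorization through the intermediate space $T+U$.

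The last step is to match the reorganized left-hand side with the right-hand side. The outer sum over $(m, Y, Z)$ carries weights which, after simplification using Corollary \ref{cleverenumeration} for the sizes of the sets of trivial extensions, coincide with $\proba[m]\,\uni_{m,T+U}[Y]\,\uni_{m,V+W}[Z]$ via Lemma \ref{complicateproba} and Corollary \ref{uniformproba}. The inner average of trivial extensions of $G$ with right subspace $Y$ is by definition $\mathrm{R}^Y_T(G)$, while the inner average of trivial extensions of $H$ to $(Y, Z)$ carries precisely the constraints $h_1^{++}(T+U) = V^+$ and $h_2^{++}(V+W) = U^+$ for the auxiliary spaces $V^+$ and $U^+$ inherited from the decomposition; these are exactly the compatibility conditions characterizing $\Psi^{(Y;Z)}_{(U,T;V,W)}(H)$ in Theorem \ref{lrdecomposition}. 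The main obstacle is the combinatorial bookkeeping needed to verify that all weights and constraints line up; this is, however, formally parallel to the computations already carried out in the proof of Theorem \ref{lrdecomposition}, with the role of $I$ there played by $H$ here, so no essentially new combinatorial identity beyond careful applications of Lemmas \ref{lemma3co1} and \ref{lemma3co2} is required.
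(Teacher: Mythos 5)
Your overall skeleton matches the paper's proof: expand the left-hand side, introduce an intermediate subspace $Y$ of dimension $m$ containing $T+U$, collapse the two-step extensions of $G$ via Proposition \ref{firststepan}, and match the outer weights with $\proba[m]\,\uni_{m,T+U}[Y]\,\uni_{m,V+W}[Z]$ using Lemma \ref{complicateproba}, Corollary \ref{uniformproba} and Lemma \ref{lemma3co1}. However, there is a genuine gap at what you call the ``key step.'' You assert that every trivial extension of $G^{+}\cdot H^{+}$ to $(S^{++},V^{+}+W)$ ``factors, after the choice of an intermediate subspace $Y$, as a dot product $G^{++}\cdot H^{++}$,'' and you treat this as a mere reindexing. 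Only the easy direction is immediate: the composition of trivial extensions is a trivial extension (via the quotient-map characterization $\widetilde{g_{1}}\widetilde{g_{2}}=\id$). The converse --- that the \emph{uniform} average over trivial extensions of the composite coincides with the mixture, over a uniformly chosen intermediate space, of dot products of independent uniform trivial extensions of the two factors --- is precisely Equation \eqref{entersrandomness} and Lemma \ref{lastlemmaan} of the paper, and it is the hardest part of the whole associativity argument. A given trivial extension of the composite arises from many choices of $Y$ and many factorizations, so one must verify that the induced measure is uniform; the paper does this by exhibiting a free transitive action of the group $\mathrm{P}(m,k)\times\mathrm{N}(m,GH)$ on the set of trivial extensions with fixed endpoints and checking invariance of the mixture measure under that action (the invariance under $\mathrm{N}(m,GH)$ rests on the nontrivial observation that $(R_{2}+R,R_{3}+R)$ is distributed as $(R_{2},R_{3})$ for independent uniform rectangular matrices).

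Your claim that the remaining bookkeeping is ``formally parallel to the computations already carried out in the proof of Theorem \ref{lrdecomposition}'' does not close this gap: Theorem \ref{lrdecomposition} and Lemmas \ref{lemma3co1}--\ref{lemma3co2} concern the commutation of the operators $\mathrm{L}^{W}$ and $\mathrm{R}^{X}$ applied to a \emph{single} partial isomorphism, whereas the missing step concerns the compatibility of trivial extension with the \emph{composition} ($\cdot$) of two distinct partial isomorphisms. These are different statements, and the second is not a consequence of the first. To repair the proof you must state and prove the analogue of Lemma \ref{lastlemmaan} (or cite it, if you regard it as available), since without it the collapse of the triple sum into $\mathrm{R}^{Y}_{T}(G)\cdot\Psi^{(Y;Z)}_{(U,T;V,W)}(H)$ is unjustified.
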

\medskip

\begin{lemma}\label{lastlemmaan}
Consider two partial isomorphisms $G$ defined over spaces $A$ and $B$, and $H$ defined over spaces $D$ and $E$. If $A\subset A^{+} \subset A^{++}$, $E \subset E^{+} \subset E^{++}$, and $B+D \subset C^{+}$, then
\begin{align*}&\mathrm{LR}_{(A^{+},E^{+})}^{(A^{++},E^{++})}\left(\mathrm{LR}_{(A,B)}^{(A^{+},C^{+})}(G)\cdot \mathrm{LR}_{(D,E)}^{(C^{+},E^{+})}(H)\right) \\
&= \sum\, \uni_{m,C^{+}}[C^{++}]\,\left(\Phi^{(A^{++};C^{++})}_{(A,A^{+};B,C^{+})}(G)\cdot \Phi_{(D,C^{+};E,E^{+})}^{(C^{++};E^{++})}(H)\right),
\end{align*}
where $m=\dim A^{++}=\dim E^{++}$; and $$\Phi^{(A^{++};C^{++})}_{(A,A^{+};B,C^{+})}(G)=\mathrm{LR}_{(A^{+},C^{+})}^{(A^{++},C^{++})}\mathrm{LR}_{(A,B)}^{(A^{+},C^{+})}(G)$$ is the mean of all trivial extensions of $G$ to spaces $A^{++}$ and $C^{++}$ that send $A^{+}$ to $C^{+}$ (and similarly for $\Phi_{(D,C^{+};E,E^{+})}^{(C^{++};E^{++})}(H)$).
\end{lemma}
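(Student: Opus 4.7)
The plan is first to exploit bilinearity. Both sides depend linearly on $G$ and $H$, and both $\mathrm{LR}$ and the restricted composition $\cdot$ commute with taking averages. Since $\mathrm{LR}^{(A^{+},C^{+})}_{(A,B)}(G)$ is by definition the uniform mean over the trivial extensions $\tilde G$ of $G$ to $(A^{+},C^{+})$, and similarly for $H$, the claim reduces to the specialised identity
\[
\mathrm{LR}^{(A^{++},E^{++})}_{(A^{+},E^{+})}(\tilde G \cdot \tilde H) \;=\; \sum_{C^{++}} \uni_{m,C^{+}}[C^{++}]\; \mathrm{LR}^{(A^{++},C^{++})}_{(A^{+},C^{+})}(\tilde G)\cdot \mathrm{LR}^{(C^{++},E^{++})}_{(C^{+},E^{+})}(\tilde H)
\]
for each fixed pair $(\tilde G,\tilde H)$ of trivial extensions. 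After this reduction, both sides are uniform averages of partial isomorphisms in $\mathscr{I}(n,\For_q)$ with left space $A^{++}$ and right space $E^{++}$, and the task is to match them term by term.

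Second, I would parametrise both sides via characterization (4) of Definition \ref{partisodef}. A trivial extension on the left is encoded by a completion of a chosen basis of $A^{+}$ into a basis of $A^{++}$, the completion of the basis of $E^{+}$ induced by $\ell_{1}=\hat h_{1}\hat g_{1}$, and an arbitrary rectangular matrix $R$ of size $l\times(m-l)$ parametrising the upper-right block of $\ell_{2}$. For fixed $C^{++}$, a factored pair $(\hat G,\hat H)$ on the right is encoded by the same basis completion of $A^{+}$, the images $(c_{i})_{i>l}$ completing a basis of $C^{+}$ to one of $C^{++}$ through $\hat g_{1}$, the analogous completion in $E^{++}$ through $\hat h_{1}$, and two rectangular matrices $R_{G},R_{H}$. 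Performing the same block-matrix multiplication as at the end of the proof of Proposition \ref{firststepan}, I would verify that $\hat G\cdot \hat H$ is indeed a trivial extension of $\tilde G\cdot \tilde H$ and that its defining matrix $R$ is an affine function of $R_{G}$, $R_{H}$ and the $c_{i}$'s. This produces, for every $C^{++}\supset C^{+}$ of dimension $m$, a family of factorisations of each target extension whose only free parameter is essentially a basis of $C^{++}/C^{+}$.

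The main obstacle is the combinatorial bookkeeping. For a target trivial extension $\partial$ on the left, I need that the sum
\[
\sum_{C^{++}\supset C^{+},\,\dim C^{++}=m} \uni_{m,C^{+}}[C^{++}]\cdot\frac{\#\{(\hat G,\hat H)\text{ factoring }\partial\text{ through }C^{++}\}}{F_{q}(m,l,l_{1}^{G})\,F_{q}(m,l,l_{1}^{H})}
\]
equals the uniform weight $1/F_{q}(m,l,\cdot)$ that $\partial$ carries on the left, where $F_{q}$ is as in Corollary \ref{cleverenumeration}. Because the count of factorisations through each $C^{++}$ depends only on dimensions and not on $\partial$, the sum can be evaluated uniformly, and the remaining identity reduces to a Pochhammer manipulation combining Corollary \ref{cleverenumeration} with the normalisation $\sum_{C^{++}}\uni_{m,C^{+}}[C^{++}] = 1$. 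The conceptual heart of the argument --- and where I expect the real work to lie --- is that the composition of two trivial extensions really is a trivial extension of the composition with the correct extended type in the sense of characterization (1) of Definition \ref{partisodef}; this is visible from the block-triangular form of the composite matrices, with identity blocks along the newly added diagonal positions indexed by $(a_{l+1},\ldots,a_{m})$, and it is precisely what guarantees that no ``non-trivial'' terms creep in when the intermediate space $C^{++}$ is forgotten.
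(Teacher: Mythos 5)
Your reduction by bilinearity to the fixed-extension identity
$$\mathrm{LR}^{(A^{++},E^{++})}_{(A^{+},E^{+})}(\tilde G\cdot\tilde H)\;=\;\sum_{C^{++}}\uni_{m,C^{+}}[C^{++}]\,\left(\mathrm{LR}^{(A^{++},C^{++})}_{(A^{+},C^{+})}(\tilde G)\cdot\mathrm{LR}^{(C^{++},E^{++})}_{(C^{+},E^{+})}(\tilde H)\right)$$
is exactly the paper's first step, and the closure property (a product of trivial extensions is a trivial extension of the product, immediate from characterization $(3)$ of Definition \ref{partisodef} via the induced quotient maps) is indeed needed. But you have located ``the real work'' in the wrong place: that closure property is the easy half, and it only shows that the right-hand side is \emph{supported} on the set of trivial extensions of $\tilde G\cdot\tilde H$. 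The step you assert without proof --- ``the count of factorisations through each $C^{++}$ depends only on dimensions and not on $\partial$'' --- is the actual heart of the lemma. Concretely, after normalizing bases as in characterization $(4)$, the backward block of a composite $\hat G\cdot\hat H$ has the form $(GH-G)\,R_{H}+(G-I_{k})\,R_{G}$, while a target extension $\partial$ carries a block $(GH-I_{k})\,R$ with $R$ arbitrary. You must show both that \emph{every} such target block is attained and that the fibers have constant cardinality. Constancy of fibers is automatic for a linear map, but surjectivity is not: a priori the image of $(R_{G},R_{H})\mapsto(GH-G)R_{H}+(G-I_{k})R_{G}$ could be a proper subspace of $\{(GH-I_{k})R\}$, in which case your sum over $C^{++}$ would miss some trivial extensions of $\tilde G\cdot\tilde H$ and the two sides would not match. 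The saving fact is the identity $GH-I_{k}=G(H-I_{k})+(G-I_{k})$, i.e.\ the diagonal substitution $R_{G}=R_{H}=R$ hits every target block; this is the one line your sketch is missing.

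For comparison, the paper packages precisely this point as a group-invariance argument: the group $\mathrm{P}(m,k)\times\mathrm{N}(m,GH)$ acts freely and transitively on $\mathscr{E}\partiso{U^{+}\uparrow U}{W\uparrow W^{+}}{g_{1}h_{1}}{h_{2}g_{2}}$, and the right-hand side is invariant under it --- the $\mathrm{N}(m,GH)$-invariance being the statement that $(R_{G}+R,R_{H}+R)$ has the same joint law as $(R_{G},R_{H})$, which is the same diagonal identity in probabilistic clothing. Note also that once uniformity is established no Pochhammer manipulation is required: both sides are probability measures (total mass $1$) on the same finite set, one uniform and the other constant on that set, so they coincide. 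If you supply the identity $GH-I_{k}=G(H-I_{k})+(G-I_{k})$ and the resulting surjectivity with constant fibers, your counting route closes and is equivalent to the paper's proof.
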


\begin{proof}
Let $G=\partiso{U}{V}{g_{1}}{g_{2}}$ and $H=\partiso{V}{W}{h_{1}}{h_{2}}$ be two partial isomorphisms of same dimension $k$, and $m=\dim U^{+}=\dim W^{+}$ with $U\subset U^{+}$ and $W\subset W^{+}$. We claim that:
\begin{equation}
\mathrm{LR}_{(U,W)}^{(U^{+},W^{+})}(G \cdot H)=\sum\, \uni_{m,V}[V^{+}]\, \left(\mathrm{LR}_{(U,V)}^{(U^{+},V^{+})}(G)\cdot \mathrm{LR}_{(V,W)}^{(V^{+},W^{+})}(H)\right).\label{entersrandomness}
\end{equation}
 To begin with, notice that both sides of \eqref{entersrandomness} are averages of partial isomorphisms contained in the set $\mathscr{E}\partiso{U^{+}\uparrow U}{W \uparrow W^{+}}{g_{1}h_{1}}{h_{2}g_{2}}$. This is obvious for the left-hand side, and for the right-hand side, if $\partext{U^{+}}{V^{+}}{g_{1}^{+}}{g_{2}^{+}}$ and $\partext{V^{+}}{W^{+}}{h_{1}^{+}}{h_{2}^{+}}$ are two trivial extensions of $G$ and $H$, then with the notations of Definition \ref{partisodef}, $\widetilde{g_{1}}\widetilde{g_{2}}=\id_{U^{+}/U}$ and $\widetilde{h_{1}}\widetilde{h_{2}}=\id_{V^{+}/V}$, so
$$\widetilde{g_{1}h_{1}}\,\widetilde{h_{2}g_{2}} = \widetilde{g_{1}}\left(\widetilde{h_{1}}\widetilde{h_{2}}\right)\widetilde{g_{2}}=\widetilde{g_{1}}\widetilde{g_{2}}=\id_{U^{+}/U},$$
which means that the product of trivial extensions is a trivial extension. Therefore, it suffices to show that every element of $\mathscr{E}\partiso{U^{+}\uparrow U}{W \uparrow W^{+}}{g_{1}h_{1}}{h_{2}g_{2}}$ appears in the right-hand side of \eqref{entersrandomness}, and with the same weight. Fix two bases $\Ec^{+}$ and $\Gc^{+}$ of $U^{+}$ and $W^{+}$ such that 
$$\matr_{\Ec,\Gc}(g_{1}h_{1}) = I_{k} \qquad;\qquad \matr_{\Gc,\Ec}(h_{2}g_{2})=GH.$$
By Lemma \ref{importantbijection}, a trivial extension of $(G\cdot H)$ is given by two matrices
\begin{align*}\matr_{\Ec^{+},\Gc^{+}}((g_{1}h_{1})^{+})=\begin{pmatrix} I_{k} & R_{1} \\
0& K\end{pmatrix} \quad\text{with }K \text{ invertible and }R_{1}\text{ rectangular};\\
\matr_{\Gc^{+},\Ec^{+}}((h_{2}g_{2})^{+}) =  \begin{pmatrix} GH & (GH-I_{k}) \,R_{2} \\ 0 & I_{m-k}\end{pmatrix} \begin{pmatrix} I & -R_{1}K^{-1} \\
0& K^{-1}\end{pmatrix} \quad\text{with }R_{2}\text{ rectangular}.
\end{align*}
Thus, introducing the two subgroups of $\GL(m,\For_{q})$
\begin{align*}\mathrm{P}(m,k)&=\left\{\begin{pmatrix} I_{k} & R \\ 0 & K\end{pmatrix},\,\,\text{ with }K \text{ invertible and }R\text{ rectangular}\right\};\\
\mathrm{N}(m,GH)&=\left\{\begin{pmatrix} I_{k} & (GH-I_{k})\,R \\ 0 & I_{m-k}\end{pmatrix},\,\,\text{ with }R\text{ rectangular}\right\},
\end{align*}
the group $\mathrm{P}(m,k) \times \mathrm{N}(m,GH)$ acts on $\Ec\partiso{U^{+}\uparrow U}{W \uparrow W^{+}}{g_{1}h_{1}}{h_{2}g_{2}}$ in a free transitive way. Consequently, it suffices to verify that the right-hand side of Equation \eqref{entersrandomness} is invariant under this action. In the following we denote RHS this right-hand side.\bigskip

Set $f_{i}=g_{1}(e_{i})$ for $i \in \lle 1,k\rre$, and
\begin{align*}
\matr_{\Ec,\Fc}(g_{1})&=I_{k}\qquad;\qquad \matr_{\Fc,\Ec}(g_{2})=G\\
\matr_{\Fc,\Gc}(h_{1})&=I_{k}\qquad;\qquad \matr_{\Gc,\Fc}(h_{2})=H.
\end{align*}
To choose randomly $V^{+}$ in RHS, one can choose a random completion $(f_{k+1},\ldots,f_{m})$ of $(f_{1},\ldots,f_{k})$ into a family of $m$ independent vectors of $(\For_{q})^{n}$, and then set $V^{+}=\Vect(f_{1},\ldots,f_{m})$. This allows to choose simultaneously the extension $g_{1}^{+}$ by requiring that $\matr_{\Ec^{+},\Fc^{+}}(g_{1}^{+})=I_{m}$. Then, in RHS, conditionally to the previous choice of $\Fc^{+}$,
\begin{align*}
\matr_{\Fc^{+},\Gc^{+}}(h_{1}^{+})&=\begin{pmatrix} I_{k} & R_{1} \\ 0 & K \end{pmatrix} ;\\
\matr_{\Gc^{+},\Fc^{+}}(h_{2}^{+})&= \begin{pmatrix} H & (H-I_{k})\,R_{2} \\ 0 & I_{m-k} \end{pmatrix} \begin{pmatrix} I_{k} & -R_{1}K^{-1} \\ 0 & K^{-1} \end{pmatrix};\\
\matr_{\Fc^{+},\Ec^{+}}(g_{2}^{+}) &= \begin{pmatrix} G & (G-I_{k})\,R_{3} \\ 0 & I_{m-k}\end{pmatrix}
\end{align*}
where $K$ has uniform law on $\GL(m-k,\For_{q})$, and $R_{1}$, $R_{2}$ and $R_{3}$ are independent and uniformly distributed rectangular matrices in $\mathrm{M}(k \times (m-k), \For_{q})$. So,
\begin{align*}
\text{RHS}&=\sum \uni[\Fc^{+},K,R_{1},R_{2},R_{3}]\,\left( \left(\begin{smallmatrix} I_{k} & R_{1} \\ 0 & K \end{smallmatrix}\right) \rightleftarrows \left(\begin{smallmatrix} G & (G-I_{k})\,R_{3}\\ 0 & I_{m-k}  \end{smallmatrix}\right)\left(\begin{smallmatrix}  H & (H-I_{k})\,R_{2}\\ 0 & I_{m-k} \end{smallmatrix}\right)\left(\begin{smallmatrix}  I_{k} & -R_{1}K^{-1} \\ 0 & K^{-1}  \end{smallmatrix}\right) \right)\\
&=\sum\uni[\Fc^{+},K,R_{1},R_{2},R_{3}]\,\left( \left(\begin{smallmatrix} I_{k} & R_{1} \\ 0 & K \end{smallmatrix}\right) \rightleftarrows\left(\begin{smallmatrix}  GH\, & (GH-G)\,R_{2}+(G-I_{k})\,R_{3}\\ 0 & I_{m-k} \end{smallmatrix}\right)\left(\begin{smallmatrix}  I_{k} & -R_{1}K^{-1} \\ 0 & K^{-1}  \end{smallmatrix}\right) \right)
\end{align*}
where the $\uni$'s denote the previously described uniform laws. Now, note that the random free family $\Fc^{+}$ does not appear anymore in the description of the partial isomorphisms from $U^{+}$ to $W^{+}$. Thus, 
$$\text{RHS}=\sum\uni[K,R_{1},R_{2},R_{3}]\,\left( \left(\begin{smallmatrix} I_{k} & R_{1} \\ 0 & K \end{smallmatrix}\right) \rightleftarrows\left(\begin{smallmatrix}  GH\, & (GH-G)\,R_{2}+(G-I_{k})\,R_{3}\\ 0 & I_{m-k} \end{smallmatrix}\right)\left(\begin{smallmatrix}  I_{k} & -R_{1}K^{-1} \\ 0 & K^{-1}  \end{smallmatrix}\right) \right),$$
with the matrices written w.r.t. the two bases $\Ec^{+}$ and $\Gc^{+}$. In this representation of RHS, the invariance by action of $\mathrm{P}(m,k)$ is obvious. On the other hand, if one makes act an element $\left(\begin{smallmatrix} I_{k}& (GH-I_{k})\,R\\ 0 & I_{m-k} \end{smallmatrix}\right)$ of $\mathrm{N}(m,GH)$, then the random matrix
$$\left(\begin{smallmatrix}  GH\, & (GH-G)\,R_{2}+(G-I_{k})\,R_{3}\\ 0 & I_{m-k} \end{smallmatrix}\right)\quad\text{
becomes}\quad
\left(\begin{smallmatrix}  GH\, & (GH-G)\,(R_{2}+R)+(G-I_{k})\,(R_{3}+R)\\ 0 & I_{m-k} \end{smallmatrix}\right).$$
Since $R_{2}$ and $R_{3}$ are independent uniformly distributed rectangular matrices, the law of $(R_{2}+R,R_{3}+R)$ is the same as the law of $(R_{2},R_{3})$, so the invariance by action of $\mathrm{N}(m,GH)$ is also shown. Finally,  Lemma \ref{lastlemmaan} follows from the simpler and more general identity \eqref{entersrandomness} by expanding linearly $\mathrm{LR}_{(A,B)}^{(A^{+},C^{+})}(G)$ and $\mathrm{LR}_{(D,E)}^{(C^{+},E^{+})}(H)$.
\end{proof}
\bigskip

\begin{proof}[Proof of Proposition \ref{RRL}]
Set $k=\dim (T+U)$ and $l=\dim (V+W)$. One has by definition of the extension operators:
$$\mathrm{R}^{U}(G)\cdot \mathrm{L}^{T}(H) = \sum \uni_{k,S}[S^{+}] \,\uni_{k,V}[V^{+}]\,\left(\mathrm{LR}_{(S,T)}^{(S^{+},T+U)}(G)\cdot \mathrm{LR}_{(U,V)}^{(T+U,V^{+})}(H)\right).$$
By Lemma \ref{complicateproba}, the law of $m=\dim (V^{+}+W)$ is given by $\proba[m]$, and then by Corollary \ref{uniformproba} and Lemma \ref{lemma3co1}, one obtains for $\mathrm{R}^{W}(\mathrm{R}^{U}(G)\cdot \mathrm{L}^{T}(H))$
$$\sum \proba[m]\,\uni_{k,S}[S^{+}]\,\uni_{m,S^{+}}[S^{++}] \,\uni_{m,V+W}[Z]\,\mathbb{C}_{k,V,W,Z}[V^{+}]\,\Theta_{(m,S^{+},S^{++},Z,V^{+})}(G,H)$$
with
\begin{align*}
 \Theta_{(m,S^{+},S^{++},Z,V^{+})}(G,H) &= \mathrm{LR}_{(S^{+},V^{+})}^{(S^{++},Z)}\left(\mathrm{LR}_{(S,T)}^{(S^{+},T+U)}(G)\cdot \mathrm{LR}_{(U,V)}^{(T+U,V^{+})}(H)\right) \\ 
&=\sum \uni_{m,T+U}[Y]\,\left(\Phi^{(S^{++},Y)}_{(S,S^{+};T,T+U)}(G)\cdot \Phi_{(U,T+U;V,V^{+})}^{(Y;Z)}(H)\right)
\end{align*}
by Lemma \ref{lastlemmaan}. Therefore, by changing the order of summation, 
$$
\mathrm{R}^{W}(\mathrm{R}^{U}(G)\cdot \mathrm{L}^{T}(H))=\sum \proba[m]\,\uni_{m,T+U}[Y]\,\uni_{m,V+W}[Z]\,\mathbb{C}_{k,V,W,Z}[V^{+}]\, \Xi_{(m,Y,Z,V^{+})}(G,H),
$$
where
\begin{align*}
\Xi_{(m,Y,Z,V^{+})}(G,H)&=\sum \uni_{k,S}[S^{+}]\,\uni_{m,S^{+}}[S^{++}] \,\left(\Phi^{(S^{++};Y)}_{(S,S^{+};T,T+U)}(G)\cdot \Phi_{(U,T+U;V,V^{+})}^{(Y;Z)}(H)\right)\\
&=\sum\uni_{m,S}[S^{++}] \,\left(\mathrm{LR}^{(S^{++},Y)}_{(S,T)}(G)\cdot \Phi_{(U,T+U;V,V^{+})}^{(Y;Z)}(H)\right)\\
&=\mathrm{R}_{T}^{Y}(G)\cdot \Phi_{(U,T+U;V,V^{+})}^{(Y;Z)}(H).
\end{align*}
Finally, by using the remark at the beginning of the proof of Lemma \ref{lastlemmaan}, the definition of the operators $\Psi$, and Lemma \ref{lemma3co1},
$$\sum \mathbb{C}_{k,V,W,Z}[V^{+}] \,\Phi_{(U,T+U;V,V^{+})}^{(Y;Z)}(H)=\Psi_{(U,T;V,W)}^{(Y;Z)}(H),$$
which leads to the expansion stated in the Proposition.
\end{proof}\bigskip

\begin{proof}[Proof of Theorem \ref{mainan}]
Proposition \ref{RRL} shows that
\begin{align*}
(G*H)*I &= (\mathrm{R}^{W}(\mathrm{R}^{U}(G)\cdot \mathrm{L}^{T}(H)))*I\\
&=\sum \proba[m]\,\uni_{m,T+U}[Y]\,\uni_{m,V+W}[Z]\,\left(\left(\mathrm{R}^{Y}_{T}(G)\cdot\Psi_{(U,T;V,W)}^{(Y;Z)}(H)\right)*I\right)\\
&=\sum \proba[m]\,\uni_{m,T+U}[Y]\,\uni_{m,V+W}[Z]\,\left(\mathrm{R}^{Y}_{T}(G)\cdot\Psi_{(U,T;V,W)}^{(Y;Z)}(H)\cdot\mathrm{L}^{Z}_{W}(I)\right).
\end{align*}
The symmetry of this form ensures that one obtains the same with $G*(H*I)$. Thus, we have finally shown that $\mathscr{A}(n,\For_{q})$ is an algebra.
\end{proof}\medskip

\begin{remark}\label{naive}
The whole proof of the associativity of the product $*$ justifies Definition \ref{defpair} of partial isomorphisms as pairs of isomorphisms between two arbitrary subspaces of same dimension. One could have tried the following simpler construction of partial isomorphisms: a partial isomorphism is one linear automorphism $g$ of a subspace $V \subset (\For_{q})^{n}$, and the product of these simpler partial isomorphisms is defined by taking means of trivial extensions, a trivial extension of $(g,V)$ to a space $V^{+}$ containing $V$ being an automorphism $g^{+}$ of $V^{+}$ such $(g^{+})_{|V}=g$ and $t(g^{+})=t(g)\sqcup (X-1 : 1^{k^{+}-k})$. This seems easier and more natural, but it does not work: the underlying ``algebra'' is not associative as soon as $n\geq 2$. Indeed, given two ``naive'' partial isomorphisms $G,H$ defined on the same space $L$ of dimension $1$, and $I$ the partial isomorphism which is the identity on a $2$-dimensional space $P$ containing $L$, it is easy to see that for $G$ and $H$ non trivial, $G*(H*I)$ is not the same as $(G*H)*I$: the last product contains only diagonalizable partial isomorphisms of $P$, and it is not the case of the first product.
\end{remark}
\medskip\bigskip

\section{Constructions around $\mathscr{A}(n,\For_{q})$ and $\mathscr{Z}(n,\For_{q})$}\label{constructions}
In this section, we analyze the relations between $\C[\GL(n,\For_{q}) \times (\GL(n,\For_{q}))\opp]$ and $\mathscr{A}(n,\For_{q})$; between $\mathrm{Z}(n,\For_{q})$ and a subalgebra of invariants $\mathscr{Z}(n,\For_{q})\subset \mathscr{A}(n,\For_{q})$; and between the algebras $\mathscr{Z}(n,\For_{q})$ for various values of $n$. Our main goal is to prove Theorem \ref{generalFH}, which generalizes an old result of Farahat and Higman for products of conjugacy classes in the symmetric group algebras (\cite[Theorem 2.2]{FH59}), see also \cite[Proposition 7.3]{IK99}, \cite[Theorem 1]{Mel10} and \cite[Theorem 2.1]{Tout12}.
\medskip

\subsection{The map from $\mathscr{A}(n,\For_{q})$ to $\C[\GL(n,\For_{q}) \times (\GL(n,\For_{q}))\opp]$}
Consider the extension operator $$\pi_{n}=\mathrm{L}^{(\For_{q})^{n}}=\mathrm{R}^{(\For_{q})^{n}};$$ 
it is idempotent, and it sends $\mathscr{A}(n,\For_{q})$ to a subalgebra of it which is isomorphic to $\C[\GL(n,\For_{q}) \times \GL(n,\For_{q})\opp]$. Indeed, a partial isomorphism with underlying spaces $(\For_{q})^{n}$ and $(\For_{q})^{n}$ is just a pair of isomorphisms $(g_{1},g_{2})$, the product being 
$$(g_{1},g_{2})(h_{1},h_{2})=(g_{1}h_{1},h_{2}g_{2}),$$
that is to say the product of $\GL(n,\For_{q}) \times (\GL(n,\For_{q}))\opp$.
\begin{proposition}\label{projection}
The map $\pi_{n} :\mathscr{A}(n,\For_{q}) \to \C[\GL(n,\For_{q}) \times (\GL(n,\For_{q}))\opp]$ is a morphism of algebras. Moreover, it is compatible with the action of $\GL(n,\For_{q})$ on the left and on the right.
\end{proposition}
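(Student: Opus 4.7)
The plan is to reduce everything to the associativity of $*$ (Theorem~\ref{mainan}) via a ``unit trick''. Setting $E = \partiso{(\For_{q})^{n}}{(\For_{q})^{n}}{\id}{\id}$, I would first observe that $\pi_{n}(G) = E * G = G * E$ for every $G \in \mathscr{A}(n,\For_{q})$: this follows directly from the characterization of $\mathrm{L}_{V}^{V^{+}}$ and $\mathrm{R}_{W}^{W^{+}}$ as multiplications by identity partial isomorphisms (recorded just before Proposition~\ref{firststepan}), combined with the dimension constraint that forces a trivial extension with left space $(\For_{q})^{n}$ to also have right space $(\For_{q})^{n}$. Consequently $\pi_{n} = \mathrm{L}^{(\For_{q})^{n}} = \mathrm{R}^{(\For_{q})^{n}}$ takes values in the subspace $\mathscr{B}$ spanned by partial isomorphisms with both underlying spaces equal to $(\For_{q})^{n}$, which I identify with $\C[\GL(n,\For_{q}) \times (\GL(n,\For_{q}))\opp]$ via the obvious correspondence.

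The second step is to check that on $\mathscr{B}$ the product $*$ coincides with the group algebra product. This is immediate from the definition: when both factors have underlying spaces $(\For_{q})^{n}$, the intermediate space $V+W$ is already $(\For_{q})^{n}$, so the extension operators in the definition of $*$ act as identities, and the resulting composition is $(g_{1}h_{1},h_{2}g_{2})$, matching the multiplication of $\GL(n,\For_{q}) \times (\GL(n,\For_{q}))\opp$. In particular $E * E = E$, and more generally $X * E = X$ for every $X \in \mathscr{B}$.

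Combining these with the associativity of $*$, the morphism property then drops out in two lines. For arbitrary $G,H \in \mathscr{A}(n,\For_{q})$,
\[ \pi_{n}(G * H) = E * (G * H) = (E * G) * H = \pi_{n}(G) * H, \]
while on the other hand, since $\pi_{n}(G) \in \mathscr{B}$,
\[ \pi_{n}(G) * \pi_{n}(H) = \pi_{n}(G) * (E * H) = (\pi_{n}(G) * E) * H = \pi_{n}(G) * H. \]
The two expressions therefore coincide, and under the identification $\mathscr{B} \simeq \C[\GL(n,\For_{q}) \times (\GL(n,\For_{q}))\opp]$ this is the desired morphism of algebras.

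For the compatibility with the $\GL(n,\For_{q}) \times \GL(n,\For_{q})$-action, the plan is to argue directly that the action permutes partial isomorphisms while preserving their type, and hence that the set of trivial extensions of $G$ with both spaces equal to $(\For_{q})^{n}$ is mapped bijectively (and with equal uniform weights) onto the corresponding set for $k \cdot G \cdot l$. Averaging commutes with this bijection, giving $\pi_{n}(k \cdot G \cdot l) = k \cdot \pi_{n}(G) \cdot l$. The main obstacle one might fear is proving the morphism property by directly manipulating the sums defining $\pi_{n}(G * H)$ and $\pi_{n}(G) * \pi_{n}(H)$, but the unit trick sidesteps this entirely and reduces the whole statement to the associativity of $*$ already established.
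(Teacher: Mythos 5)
Your argument is correct, and it takes a genuinely different route from the paper's. The paper proves the morphism property by specializing Proposition~\ref{RRL} to $W=(\For_q)^n$ (or $X=(\For_q)^n$): the sum over $m$, $Y$, $Z$ then collapses to the single term $Y=Z=(\For_q)^n$, and the operators $\mathrm{R}^{(\For_q)^n}_T$ and $\Psi^{((\For_q)^n;(\For_q)^n)}_{(U,T;V,(\For_q)^n)}$ reduce to $\pi_n$ applied to each factor. You instead treat the associativity of $*$ (Theorem~\ref{mainan}) as a black box and run the corner-algebra/unit trick with the idempotent $E=\partiso{(\For_q)^n}{(\For_q)^n}{\id}{\id}$. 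All the ingredients you invoke are legitimate: the identities $\mathrm{L}_V^{V^+}(\cdot)=\partiso{V^+}{V^+}{\id}{\id}*(\cdot)$ and $\mathrm{R}_W^{W^+}(\cdot)=(\cdot)*\partiso{W^+}{W^+}{\id}{\id}$ are recorded in the paper just before Proposition~\ref{firststepan}; the equality $\mathrm{L}^{(\For_q)^n}=\mathrm{R}^{(\For_q)^n}$ holds because both are the uniform average over the same set of trivial extensions to $((\For_q)^n,(\For_q)^n)$ (with $k^+=n$ one has $E_q(n,n,k,k_1)=F_q(n,k,k_1)$); and $*$ restricted to the corner $\mathscr{B}=E*\mathscr{A}(n,\For_q)*E$ is visibly the group-algebra product, so $X*E=X$ there. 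The two chains $\pi_n(G*H)=(E*G)*H=\pi_n(G)*H$ and $\pi_n(G)*\pi_n(H)=(\pi_n(G)*E)*H=\pi_n(G)*H$ then close the argument. What each approach buys: yours is shorter and purely formal, requiring only the already-proved associativity; the paper's re-uses the explicit decomposition of $\mathrm{L}^W\circ\mathrm{R}^X$ and so stays closer to the machinery it will need again (e.g.\ for the projections $\phi_n^{n+p}$), but logically the two are equivalent since Theorem~\ref{mainan} itself rests on Proposition~\ref{RRL}. Your treatment of the equivariance (the double action preserves types, hence maps trivial extensions to trivial extensions bijectively, so it commutes with the averaging defining $\pi_n$) is also a correct unwinding of the paper's Equations~\eqref{invariance1}--\eqref{invariance2}.
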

\begin{proof}
Take two partial isomorphisms $G=\partiso{S}{T}{g_{1}}{g_{2}}$ and $H=\partiso{U}{V}{h_{1}}{h_{2}}$. By Proposition \ref{RRL}, one has:
\begin{align*}
\pi_{n}(G * H)&=\mathrm{R}^{(\For_{q})^{n}}(\mathrm{R}^{U}(G)\cdot\mathrm{L}^{T}(H))\\
&=\sum \proba[m]\,\uni_{m,T+U}[Y]\,\uni_{m,V+(\For_{q})^{n}}[Z]\,\left(\mathrm{R}^{Y}_{T}(G)\cdot \Psi_{(U,T;V,(\For_{q})^{n})}^{(Y;Z)}(H)\right)\\
&= \mathrm{R}_{T}^{(\For_{q})^{n}}(G) \cdot \Psi_{(U,T;V,(\For_{q})^{n})}^{((\For_{q})^{n};(\For_{q})^{n})}(H) = \pi_{n}(G)\cdot \pi_{n}(H).
\end{align*}
Consider the inclusion
\begin{align*}
\GL(n,\For_{q}) &\hookrightarrow \GL(n,\For_{q}) \times (\GL(n,\For_{q}))\opp\\
g &\mapsto (g,g^{-1}).
\end{align*}
The compatibility of $\pi_{n}=\mathrm{R}^{(\For_{q})^{n}}$ with the action of $\GL(n,\For_{q})$ on the left follows from the general relations
\begin{align} &g \cdot (\mathrm{R}_{V}^{V^{+}}\partiso{U}{V}{g_{1}}{g_{2}})=\mathrm{R}_{V}^{V^{+}}(g\cdot \partiso{U}{V}{g_{1}}{g_{2}});\label{invariance1}\\
 &(\mathrm{L}_{U}^{U^{+}}\partiso{U}{V}{g_{1}}{g_{2}})\cdot k=\mathrm{L}_{U}^{U^{+}}(\partiso{U}{V}{g_{1}}{g_{2}}\cdot k).\label{invariance2}
\end{align}
They are themselves easily seen for instance from characterization $(2)$ of trivial extensions in Definition \ref{partisodef}. Since $\pi_{n}=\mathrm{L}^{(\For_{q})^{n}}$, one obtains for the same reasons the compatibility of $\pi_{n}$ with the action of $(\GL(n,\For_{q}))\opp$ on the right.
\end{proof}\bigskip

At this point one might want to construct an algebra $\mathscr{A}(\infty,\For_{q})$ which would be a projective limit of the $\mathscr{A}(n,\For_{q})$'s, in order to do generic computations directly for all the algebras $\C[\GL(n,\For_{q})\times (\GL(n,\For_{q}))\opp]$. However, the product of two partial isomorphisms $G=\partiso{S}{T}{g_{1}}{g_{2}}$ and $H=\partiso{U}{V}{h_{1}}{h_{2}}$ in an algebra $\mathscr{A}(n',\For_{q})$ involves arbitrary subspaces $S^{+}\supset S$ and $V^{+}\supset V$ inside $(\For_{q})^{n'}$. So, even if $U,V,S,T$ are included into $(\For_{q})^{n}$ with $n< n'$, there is no natural way to view the terms of $G*H$ as elements of $\mathscr{A}(n,\For_{q})$, even after applying a morphism to ``bring back'' $S^{+}$ and $V^{+}$ into $(\For_{q})^{n}$. In other terms, 
\begin{align*}
p_{n' \to n} : \mathscr{A}(n',\For_{q}) & \to\mathscr{A}(n,\For_{q})\\
\partiso{U}{V}{g_{1}}{g_{2}} &\mapsto \begin{cases} \partiso{U}{V}{g_{1}}{g_{2}} & \text{if } U,V \subset (\For_{q})^{n},\\
0&\text{otherwise}
\end{cases}
\end{align*}
is not a morphism of algebras, in opposition to what happens for partial permutations (\emph{cf.} \cite{IK99}). This difficulty seems to be a common feature for products defined by taking means, see \cite[Proposition 3.11]{Tout12}. It disappears by looking at subalgebras of invariants: indeed, one  knows then the proportion of configurations that stay in $(\For_{q})^{n}$, and this will enable us to construct projections $\mathscr{Z}(n',\For_{q})  \to\mathscr{Z}(n,\For_{q})$ that are now morphisms of (commutative) algebras.
\medskip

\subsection{The graded algebras of invariants $\mathscr{Z}(n,\For_{q})$}
We define $\mathscr{Z}(n,\For_{q})$ as the subspace of $\mathscr{A}(n,\For_{q})$ that consists in elements invariants under the actions of $\GL(n,\For_{q})$ on the left and of $(\GL(n,\For_{q}))\opp$ on the right. Equations \eqref{invariance1} and \eqref{invariance2} show that for general elements of $\mathscr{I}(n,\For_{q})$, \begin{align*}g\cdot(x*y)&=(g \cdot x)*y,\\ (x*y)\cdot h &= x*(y\cdot h);\end{align*} therefore, $\mathscr{Z}(n,\For_{q})$ is a subalgebra of $\mathscr{A}(n,\For_{q})$. Then, Proposition \ref{projection} ensures that $\pi_{n}(\mathscr{Z}(n,\For_{q}))$ is included into the center of the group algebra $\mathrm{Z}(n,\For_{q})$, and it is in fact equal to this center. Indeed, the injective map
\begin{align*}i_{n}: \C[\GL(n,\For_{q})\times(\GL(n,\For_{q}))\opp]) &\to \mathscr{A}(n,\For_{q})\\
(g,h) &\mapsto \partiso{(\For_{q})^{n}}{(\For_{q})^{n}}{g}{h}\end{align*}
satisfies $\pi_{n} \circ i_{n}=\id_{\C[G \times G\opp]}$, and it sends $\mathrm{Z}(n,\For_{q})$ into $\mathscr{Z}(n,\For_{q})$. So, for every $n$, we obtain an algebra that sends onto $\mathrm{Z}(n,\For_{q})$ in a natural way. In this paragraph, we detail the properties of $\mathscr{Z}(n,\For_{q})$.\bigskip

To start with, we endow $\mathscr{A}(n,\For_{q})$ with the grading $\deg \partiso{V}{W}{g_{1}}{g_{2}}=\dim V=\dim W.$ This is a grading of algebras: indeed, for two partial isomorphisms $\partiso{U}{V}{g_{1}}{g_{2}}$ and $\partiso{W}{X}{h_{1}}{h_{2}}$,
\begin{align*}\deg\left(\partiso{U}{V}{g_{1}}{g_{2}}*\partiso{W}{X}{h_{1}}{h_{2}}\right)&= \deg\left(\left( \mathrm{R}_{V}^{V+W}\partiso{U}{V}{g_{1}}{g_{2}} \right)\cdot \left(\mathrm{L}_{W}^{V+W}\partiso{W}{X}{h_{1}}{h_{2}}\right)\right) \\
&= \dim (V+W)\\
& \leq \dim V+\dim W \\
&\leq  \deg \partiso{U}{V}{g_{1}}{g_{2}}+\deg \partiso{W}{X}{h_{1}}{h_{2}}.
\end{align*}
For a polypartition $\bbmu$ of size $k \leq n$, recall that $\bbmu\!\! \uparrow^{n}$ is the polypartition of size $n$ obtained by adding parts $1$ to the partition $\mu(X-1)$. We also set $k=|\bbmu|$, $k_1=\ell(\mu(X-1))$ and $k_{11}=m_1(\mu(X-1))$. These quantities, and Pochhammer symbols of them, appear in the quotients of cardinalities $$\frac{\card\,C_{\bbmu \uparrow^n}}{\card\,C_\bbmu},$$ as follows from the formula given on page \pageref{cardinalconjugacy}.
\begin{proposition}
The space of invariants $\mathscr{Z}(n,\For_{q})$ is a graded algebra, with homogeneous basis given by
$$A_{\bbmu,n}=\sum_{\substack{\Ec=(e_{1},\ldots,e_{k})\\ \Fc=(f_{1},\ldots,f_{k})}} \partiso{\Vect(\Ec)}{\Vect(\Fc)}{I_{k}}{J(\bbmu)},$$
where $\bbmu$ runs over the set $\bigsqcup_{k=0}^{n} \ym(k,\For_{q})$ of polypartitions of size $k \in \lle 0,n\rre$; the sum is over free families $\Ec$ and $\Fc$ of size $k=|\bbmu|$; and the matrices $I_{k}$ and $J(\bbmu)$ determine isomorphisms with respect to the chosen bases $\Ec$ and $\Fc$. One has then $\deg A_{\bbmu,n}=|\bbmu|=k$, and 
$$\pi_{n}(A_{\bbmu,n})=q^{2(n-k)k_{1}}\,\frac{q^{2k^{2}}\,(q^{-1})_{k}\,(q^{-1})_{n-k+k_{11}}\,(q^{-1})_{n}}{(q^{-1})_{k_{11}}\,((q^{-1})_{n-k})^{2}\,(\card \,C_{\bbmu})} \,\,C_{\bbmu \uparrow^{n}}',$$
where as in the introduction $C_{\bbnu}'=\frac{1}{\card \,\GL(n,\For_{q})}\sum_{t(g_{1}g_{2})=\bbnu}\,(g_{1},g_{2})$ for a polypartition $\bbnu$ of size $n$.
\end{proposition}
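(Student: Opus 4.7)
My plan is to identify $A_{\bbmu,n}$ with a scalar multiple of an orbit sum---which simultaneously settles the basis claim and the grading---and then to compute $\pi_n(A_{\bbmu,n})$ by combining the equivariance of $\pi_n$ with a total-coefficient comparison.

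\emph{Identifying $A_{\bbmu,n}$ as an orbit sum.} As observed earlier, the orbits of the double $\GL(n,\For_q)$-action on $\mathscr{I}(n,\For_q)$ are labelled by polypartitions $\bbmu$ of size $k \in \lle 0,n\rre$, via $t(g_1g_2) = \bbmu$; hence $\mathscr{Z}(n,\For_q)$ admits a basis consisting of the orbit sums $S_\bbmu := \sum_{t(g_1g_2)=\bbmu} \partiso{V}{W}{g_1}{g_2}$. I will prove
\[
A_{\bbmu,n} = z_{\bbmu,k}\,S_\bbmu,\qquad z_{\bbmu,k} := \card Z_{\GL(k,\For_q)}(J(\bbmu)) = \frac{q^{k^{2}}(q^{-1})_k}{\card C_\bbmu},
\]
by counting, for a fixed $\partiso{V}{W}{g_1}{g_2}$ of type $\bbmu$, how many pairs $(\Ec,\Fc)$ produce it. The conditions $\matr_{\Ec,\Fc}(g_1) = I_k$ and $\matr_{\Fc,\Ec}(g_2) = J(\bbmu)$ force $\Fc = g_1(\Ec)$ (so $\Fc$ is determined by $\Ec$) and $\matr_\Ec(g_1 g_2) = J(\bbmu)$; the bases $\Ec$ of $V$ conjugating $g_1 g_2$ to its Jordan form are a torsor under $Z_{\GL(V)}(J(\bbmu))$, so there are exactly $z_{\bbmu,k}$ of them. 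This establishes the basis statement, while $\deg A_{\bbmu,n} = k = |\bbmu|$ is immediate from the summands.

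\emph{Shape of $\pi_n(A_{\bbmu,n})$.} By Proposition~\ref{projection}, $\pi_n = \mathrm{L}^{(\For_q)^n}$ is $\GL(n,\For_q)$-bi-equivariant, so $\pi_n(A_{\bbmu,n})$ is $\GL(n,\For_q)$-invariant. Moreover, characterisation~(1) in Definition~\ref{partisodef} forces every pair $(g_1^+,g_2^+)$ appearing in $\pi_n(A_{\bbmu,n})$ to satisfy $t(g_1^+g_2^+) = \bbmu\uparrow^n$. Combining these two facts yields $\pi_n(A_{\bbmu,n}) = c_{\bbmu,n}\,C'_{\bbmu\uparrow^n}$ for some scalar $c_{\bbmu,n}$.

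\emph{Determining the scalar.} I extract $c_{\bbmu,n}$ by equating total coefficient sums in the basis $\mathscr{I}(n,\For_q)$. The total coefficient of $A_{\bbmu,n}$ is the number of pairs $(\Ec,\Fc)$ in its defining sum, i.e.\ the square of the number $q^{kn}(q^{-1})_n/(q^{-1})_{n-k}$ of free $k$-families in $(\For_q)^n$. The operator $\pi_n$ sends each partial isomorphism to a $1/E_q(n,n,k,k_1)$-weighted mean of its $E_q(n,n,k,k_1)$ trivial extensions, so it preserves the total coefficient; and $C'_{\bbmu\uparrow^n}$ has total coefficient $\card C_{\bbmu\uparrow^n}$. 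Hence
\[
c_{\bbmu,n} = \frac{q^{2kn}\,((q^{-1})_n)^2}{((q^{-1})_{n-k})^2\,\card C_{\bbmu\uparrow^n}}.
\]
To match the announced formula it remains to express $\card C_{\bbmu\uparrow^n}/\card C_\bbmu$ via the closed form on page~\pageref{cardinalconjugacy}. Only the data attached to $P = X-1$ differ: the statistic $b$ grows by $\Delta b = (n-k)k_1 + \binom{n-k}{2}$, and the Pochhammer factor $(q^{-1})_{k_{11}}$ becomes $(q^{-1})_{k_{11}+n-k}$. Substituting and using the identity $n^{2}-k^{2}+k-n-2\Delta b = 2(n-k)(k-k_1)$, the overall $q$-exponent collapses to $2(n-k)k_1 + 2k^{2}$ and the Pochhammer symbols rearrange into exactly the stated form. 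The only technical obstacle is this last exponent bookkeeping, but it is an entirely mechanical simplification.
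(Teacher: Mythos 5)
Your proof is correct and takes essentially the same approach as the paper: both reduce to identifying $A_{\bbmu,n}$ with an orbit sum and then computing the normalizing scalar as the quotient $\card A_{\bbmu,n}/\card C_{\bbmu\uparrow^n}$. The only cosmetic difference is that you establish the orbit-sum identification by counting pairs $(\Ec,\Fc)$ (a $Z_{\GL(k)}(J(\bbmu))$-torsor), whereas the paper verifies invariance directly by re-indexing free families under $g \in \GL(n,\For_q)$; both immediately give the same scalar computation.
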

\begin{proof}
The grading on $\mathscr{A}(n,\For_{q})$ restricts to $\mathscr{Z}(n,\For_{q})$, with obviously $\deg A_{\bbmu,n}=|\bbmu|=k$. Let us check that $(A_{\bbmu,n})_{|\bbmu| \leq n}$ is indeed a basis of $\mathscr{Z}(n,\For_{q})$. By a previous remark, the orbit of a partial isomorphism $\partiso{V}{W}{g_{1}}{g_{2}}$ is uniquely determined by the conjugacy type of $g_{1}g_{2} \in \GL(V)$, and an invariant element of $\mathscr{Z}(n,\For_{q})$ writes uniquely as a linear combination of such orbits. Since $A_{\bbmu,n}$ consists obviously in elements of type $\bbmu$, it suffices to verify that it is an invariant element; it will then be a multiple of the orbit of type $\bbmu$. Let $g \in \GL(n,\For_{q})$; one has
\begin{align*}
g \cdot A_{\bbmu,n} &= \sum_{\substack{\Ec=(e_{1},\ldots,e_{k})\\ \Fc=(f_{1},\ldots,f_{k})}} g\cdot \partiso{\Vect(\Ec)}{\Vect(\Fc)}{I_{k}}{J(\bbmu)} \\
&=\sum_{\substack{\Ec'=(g^{-1}(e_{1}),\ldots,g^{-1}(e_{k}))\\ \Fc=(f_{1},\ldots,f_{k})}}  \partiso{\Vect(\Ec')}{\Vect(\Fc)}{I_{k}}{J(\bbmu)}=A_{\bbmu,n}
\end{align*}
since $g^{-1}$ yields a permutation of all free families of size $k$ in $(\For_{q})^{n}$. This shows the invariance on the left, and the invariance on the right is proven similarly.\bigskip

Thus, $(A_{\bbmu,n})_{|\bbmu| \leq n}$ is a basis of $\mathscr{Z}(n,\For_{q})$, and $\pi_{n}(A_{\bbmu,n})$ is an element of $\mathrm{Z}(n,\For_{q})$ that consists in elements all of type $\bbmu\!\!\uparrow^{n}$ (they are trivial extensions of elements of type $\bbmu$). So, $\pi_{n}(A_{\bbmu,n})=\alpha_{\bbmu}\,C_{\bbmu\uparrow^{n}}'$, and the coefficient $\alpha_{\bbmu}$ is given by the quotient of the cardinalities of $A_{\bbmu,n}$ and of $C_{\bbmu\uparrow^{{n}}}$, the conjugacy class of elements of type $\bbmu\!\!\uparrow^{n}$ in $\GL(n,\For_{q})$. Denote $k_{1}=\ell(\mu(X-1))$, and $k_{11}=m_1(\mu(X-1))$.
\begin{align*}
\card\,A_{\bbmu,n}&=\left(q^{nk}\,\frac{(q^{-1})_{n}}{(q^{-1})_{n-k}}\right)^{2}\\
\card \,C_{\bbmu\uparrow^n}&=\left(\frac{q^{2(n-k)(k-k_{1})}\,(q^{-1})_{n}\,(q^{-1})_{k_{11}}}{(q^{-1})_{k}\,(q^{-1})_{n-k+k_{11}}}\right)\card \,C_{\bbmu}\\
\alpha_{\bbmu}&=q^{2(n-k)k_{1}}\,\frac{q^{2k^{2}}\,(q^{-1})_{k}\,(q^{-1})_{n-k+k_{11}}\,(q^{-1})_{n}}{(q^{-1})_{k_{11}}\,((q^{-1})_{n-k})^{2}\,(\card \,C_{\bbmu})}.
\end{align*}\end{proof}\bigskip

In the following, we shall deal with various scalar multiples of the (completed) conjugacy classes $C_{\bbmu\uparrow^n}$ and of the classes of partial isomorphisms $A_{\bbmu,n}$. Thus, we denote
\def\arraystretch{1.3}
$$\begin{tabular}{|l|l|}
\hline classes & number of elements\\ 
\hline & \vspace{-4mm}\\
 $\displaystyle C_{\bbmu\uparrow^n} = \sum_{\substack{g \in \GL(n,\For_q)\\t(g)=\bbmu \uparrow^n}}g$ & $\left(\frac{q^{2(n-k)(k-k_{1})}\,(q^{-1})_{n}\,(q^{-1})_{k_{11}}}{(q^{-1})_{k}\,(q^{-1})_{n-k+k_{11}}}\right) \card\, C_{\bbmu}$ \\
\hline & \vspace{-4mm}\\
$\displaystyle \widetilde{C}_{\bbmu\uparrow^n}=\frac{C_{\bbmu\uparrow^n}}{\card \,C_{\bbmu\uparrow^n}}$ & $1$ \\
\hline\hline & \vspace{-4mm}\\
 $\displaystyle A_{\bbmu,n} = \sum_{\substack{\Ec=(e_{1},\ldots,e_{k})\\ \Fc=(f_{1},\ldots,f_{k})}} \partiso{\Vect(\Ec)}{\Vect(\Fc)}{I_{k}}{J(\bbmu)} $ & $\left(q^{nk}\,\frac{(q^{-1})_{n}}{(q^{-1})_{n-k}}\right)^{2}$ \\
\hline & \vspace{-4mm}\\
$\displaystyle \widehat{A}_{\bbmu,n}=\frac{A_{\bbmu,n}}{\sqrt{\card\, A_{\bbmu,n}}}$ & $q^{nk}\,\frac{(q^{-1})_{n}}{(q^{-1})_{n-k}}$ \\
\hline & \vspace{-4mm }\\
$\displaystyle \widetilde{A}_{\bbmu\uparrow n}=\frac{A_{\bbmu,n}}{\card\, A_{\bbmu,n}}$ & $1$ \\
\hline
\end{tabular}$$
where by number of elements we mean the image by the linear map $g \in \C\GL(n,\For_q) \mapsto 1 \in \C$, or by the linear map
$\partiso{V}{W}{g_1}{g_2} \in \mathscr{A}(n,\For_q) \mapsto 1 \in \C$. Most manipulations are eased by dealing with the normalized versions of classes (those with a $\sim$).

\begin{proposition}\label{ziscommutative}
The algebra $\mathscr{Z}(n,\For_{q})$ is commutative.
\end{proposition}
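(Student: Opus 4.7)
The plan is to produce an involutive $\C$-linear anti-automorphism $\tau$ of $\mathscr{A}(n,\For_{q})$ that fixes every basis element $A_{\bbmu,n}$; commutativity of $\mathscr{Z}(n,\For_{q})$ then drops out in one line.

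Set $\tau\partiso{V}{W}{g_{1}}{g_{2}}=\partiso{W}{V}{g_{2}}{g_{1}}$ and extend by linearity. Obviously $\tau^{2}=\id$, and a direct calculation from the formula of the double action gives $\tau(g\cdot x\cdot h)=h\cdot\tau(x)\cdot g$, so $\tau$ intertwines the left and right actions of $\GL(n,\For_{q})$ and in particular preserves the subspace of bi-invariants $\mathscr{Z}(n,\For_{q})$.

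The crux is that $\tau$ is an anti-homomorphism. Characterization $(2)$ of Definition \ref{partisodef} is symmetric in $g_{1}$ and $g_{2}$, so $\partext{V^{+}}{W^{+}}{g_{1}^{+}}{g_{2}^{+}}$ is a trivial extension of $\partiso{V}{W}{g_{1}}{g_{2}}$ if and only if $\partext{W^{+}}{V^{+}}{g_{2}^{+}}{g_{1}^{+}}$ is a trivial extension of $\tau\partiso{V}{W}{g_{1}}{g_{2}}$. This yields the conjugation identities $\tau\circ\mathrm{R}_{W}^{W^{+}}=\mathrm{L}_{W}^{W^{+}}\circ\tau$ and $\tau\circ\mathrm{L}_{V}^{V^{+}}=\mathrm{R}_{V}^{V^{+}}\circ\tau$, and combined with the straightforward $\tau(x\cdot y)=\tau(y)\cdot\tau(x)$ for matching (composable) partial isomorphisms, the defining identity $I*J=\mathrm{R}_{V}^{V+W}(I)\cdot\mathrm{L}_{W}^{V+W}(J)$ gives $\tau(I*J)=\tau(J)*\tau(I)$ termwise.

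To finish, I would show $\tau(A_{\bbmu,n})=A_{\bbmu,n}$. After renaming the dummy bases,
$$\tau(A_{\bbmu,n})=\sum_{\Ec,\Fc}\partiso{\Vect(\Ec)}{\Vect(\Fc)}{J(\bbmu)}{I_{k}},$$
and each term has type $\bbmu$ (the composed automorphism has matrix $I_{k}\cdot J(\bbmu)=J(\bbmu)$ in the basis $\Ec$). Since the double action of $\GL(n,\For_{q})$ is transitive on partial isomorphisms of type $\bbmu$, both $A_{\bbmu,n}$ and $\tau(A_{\bbmu,n})$ are scalar multiples of the same orbit sum, and the proportionality constants coincide because both sums have the same total number of terms (the square of the number of free families of size $k$ in $(\For_{q})^{n}$). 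Commutativity is then immediate: for $x,y\in\mathscr{Z}(n,\For_{q})$ the product $x*y$ still lies in $\mathscr{Z}(n,\For_{q})$, hence is fixed by $\tau$, so $x*y=\tau(x*y)=\tau(y)*\tau(x)=y*x$. The only genuine verification is the anti-homomorphism property, and it reduces to the symmetry of the triviality condition in the two arrows.
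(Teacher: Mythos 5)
Your argument is correct, and it takes a genuinely different route from the paper's. You are running the classical Gelfand-type trick: the swap $\tau\partiso{V}{W}{g_{1}}{g_{2}}=\partiso{W}{V}{g_{2}}{g_{1}}$ is an involutive anti-automorphism of $\mathscr{A}(n,\For_{q})$ that exchanges the left and right $\GL(n,\For_{q})$-actions and fixes every $A_{\bbmu,n}$. All the individual steps check out: characterization $(2)$ of Definition \ref{partisodef} is symmetric in the two arrows, so trivial extensions correspond bijectively under $\tau$; the normalizing constants $E_{q}(n,k^{+},k,k_{1})$ also match on both sides because $g_{1}g_{2}$ and $g_{2}g_{1}$ are conjugate and hence have fixed spaces of the same dimension $k_{1}$ (this is the one point you leave implicit and should state); $\tau(x\cdot y)=\tau(y)\cdot\tau(x)$ holds termwise for composable partial isomorphisms; and $\tau(A_{\bbmu,n})=A_{\bbmu,n}$ because both sides are bi-invariant, supported on the single orbit of type $\bbmu$, and have the same total number of terms. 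The paper proceeds quite differently: it expands $\widetilde{A}_{\bbmu,n}*\widetilde{A}_{\bbnu,n}$ by a probabilistic analysis, proves that the transition matrix $P=\matr_{\Fc^{+}}(\Gc^{+})$ between the two intermediate bases is uniformly distributed on $\GL(m,\For_{q})$, and reduces commutativity to the centrality of conjugacy-class sums in $\C\GL(m,\For_{q})$. What your approach buys is brevity and conceptual clarity; what the paper's approach buys is the explicit product formula recorded in the Remark following the Proposition, which is then used essentially in the proof that the maps $\phi_{n}^{n+p}$ are algebra morphisms. Indeed the author says as much ("we really need to do them in order to obtain a formula for the product\dots; so we cannot skip this proof"). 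So your proof establishes the Proposition as stated, but it would not let the paper dispense with the computation it carries out under that Proposition's cover.
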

\begin{proof}
Ultimately the result comes from the commutativity of all the algebras $\mathrm{Z}(\C\GL(V))$ with $V$ subspace of $(\For_{q})^{n}$, but the verifications are not obvious at all. On the other hand, we really need to do them in order to obtain a formula for the product of two classes $A_{\bbmu,n}$ and $A_{\bbnu,n}$; so we cannot skip this proof of commutativity. Take two random independent subspaces $V$ and $W$ of dimension $k$ and $l$ inside $(\For_{q})^{n}$. The law of $m=\dim(V+W)$ is the case $j=0$ of Lemma \ref{complicateproba}:
$$\proba[\dim (V+W)=m] = q^{(k+l-m)(m-n)}\,\frac{(q^{-1})_{n-k}\,(q^{-1})_{n-l}\,(q^{-1})_{k}\,(q^{-1})_{l}}{(q^{-1})_{k+l-m}\,(q^{-1})_{n-m}\,(q^{-1})_{n}\,(q^{-1})_{m-k}\,(q^{-1})_{m-l}}.$$
Then, since the law of the pair $(V,W)$ is invariant by action by $\GL(n,\For_{q})$, $Z=V+W$ is a random subspace with a law invariant by $\GL(n,\For_{q})$, so its law is a mixture of the uniform laws on random subspaces of fixed dimension $m$. Hence,
$$\proba[Z]=\proba[m]\,\uni_{m}[Z]\quad\text{with }m=\dim Z.$$
Corollary \ref{uniformproba} then tells us that knowing $Z$, $V$ is uniformly distributed among spaces of dimension $k$ included into $Z$; and knowing $Z$, $W$ is uniformly distributed among spaces of dimension $l$ included into $Z$. Notice however that the joint law of $(V,W)$ is not the product of these uniform laws; in other words, $V$ and $W$ are not independent conditionally to $Z$.  One can rewrite $A_{\bbmu,n}$ as a multiple of the normalized sum
$$
\widetilde{A}_{\bbmu,n}=\sum_{\Ec,\Fc\text{ free families}} \buni_{k}[\Ec]\,\buni_{k}[\Fc] \,\partiso{\Vect(\Ec)}{\Vect(\Fc)}{I_{k}}{J(\bbmu)} ,$$
where $\buni_{k}[\Ec]$ and $\buni_{k}[\Fc]$ are the uniform probabilities on free families of size $k=|\bbmu|$. Notice then that
\begin{align*}
&\mathrm{R}_{V}^{V^{+}}\!\left(\sum_{\substack{\Ec,\Fc\text{ free families}\\ \Vect(\Fc)=V}} \buni_{k}[\Ec]\,\buni_{V}[\Fc] \,\partiso{\Vect(\Ec)}{\Vect(\Fc)}{I_{k}}{J(\bbmu)} \right)\\
&=\sum_{\substack{\Ec^{+},\Fc^{+}\text{ free families}\\ \Vect(\Fc^{+})=V^{+},\,\,\Vect(\Fc)=V}} \buni_{k^{+}}[\Ec^{+}]\,\buni_{V^{+},V}[\Fc^{+}] \,\partiso{\Vect(\Ec^{+})}{\Vect(\Fc^{+})}{I_{k^{+}}}{J(\bbmu\uparrow k^{+})}
\end{align*}
if $V \subset V^{+}$ have dimension $k$ and $k^{+}$; here $\buni_{V}[\Fc]$ (resp., $\buni_{V^{+},V}[\Fc^{+}]$) is the uniform law on free families of $(\For_{q})^{n}$ such that $\Vect(\Fc)=V$ (resp., such that $\Vect(\Fc^{+})=V^{+}$ and $\Vect(\Fc)=V$, with $\Fc$ the family of the $k$ first vectors of $\Fc^{+}$).\bigskip

A product of two normalized sums $\widetilde{A}_{\bbmu,n}$ and $\widetilde{A}_{\bbnu,n}$ writes now as
\begin{align*}
&\sum_{\Ec,\Fc,\Gc,\Hc}\, \buni[\Ec,\Fc,\Gc,\Hc]\, \partiso{\Vect(\Ec)}{\Vect(\Fc)}{I_{k}}{J(\bbmu)} * \partiso{\Vect(\Gc)}{\Vect(\Hc)}{I_{l}}{J(\bbnu)}\\
&=\sum_{\substack{\dim V=k \\ \dim W =l \\ \Ec,\Fc,\Gc,\Hc\\ \Vect(\Fc)=V\\\Vect(\Gc)=W}}\binom{ \uni_{k}[V]\,\uni_{l}[W]\,\buni[\Ec,\Hc]\,\buni_{V}[\Fc]\,\buni_{W}[\Gc]\,\, \times \qquad \qquad \qquad\qquad}{\partiso{\Vect(\Ec)}{\Vect(\Fc)}{I_{k}}{J(\bbmu)} * \partiso{\Vect(\Gc)}{\Vect(\Hc)}{I_{l}}{J(\bbnu)} }\\
&=\sum_{\substack{Z,V,W \\ \Ec,\Fc,\Gc,\Hc\\ \Vect(\Fc)=V\\\Vect(\Gc)=W}}\binom{ \proba[m]\, \uni_{m}[Z]\,\proba[(V,W)|Z]\,\buni[\Ec,\Hc]\,\buni_{V}[\Fc]\,\buni_{W}[\Gc]\,\, \times \qquad\qquad \qquad\qquad}{\mathrm{R}_{V}^{V+W}\partiso{\Vect(\Ec)}{\Vect(\Fc)}{I_{k}}{J(\bbmu)} \cdot \mathrm{L}_{W}^{V+W}\partiso{\Vect(\Gc)}{\Vect(\Hc)}{I_{l}}{J(\bbnu)} }\\
&=\!\!\!\!\!\!\!\sum_{\substack{Z,V,W \\ \Ec^{+},\Fc^{+},\Gc^{+},\Hc^{+}\\ \Vect(\Fc^{+})=Z,\,\,\Vect(\Fc)=V \\ \Vect(\Gc^{+})=Z,\,\,\Vect(\Gc)=W}}\!\!\!\!\!\!\!\!\binom{ \proba[m]\, \uni_{m}[Z]\,\buni_{m}[\Ec^{+},\Hc^{+}]\,\proba[V|Z]\,\buni_{Z,V}[\Fc^{+}]\,\proba[W|(Z,V)]\,\buni_{Z,W}[\Gc^{+}]\,\, \times}{\partiso{\Vect(\Ec^{+})}{\Vect(\Fc^{+})}{I_{m}}{J(\bbmu \uparrow^{m})} \cdot \partiso{\Vect(\Gc^{+})}{\Vect(\Hc^{+})}{I_{m}}{J(\bbnu \uparrow^{m})} }\end{align*}
\begin{align*}
&=\!\!\!\!\!\!\!\sum_{\substack{Z,V,W \\ \Ec^{+},\Fc^{+},\Gc^{+},\Hc^{+}\\ \Vect(\Fc^{+})=Z,\,\,\Vect(\Fc)=V \\ \Vect(\Gc^{+})=Z,\,\,\Vect(\Gc)=W}}\!\!\!\!\!\!\!\!\binom{ \proba[m]\, \uni_{m}[Z]\,\buni_{m}[\Ec^{+},\Hc^{+}]\,\buni_{Z}[\Fc^{+}]\,\proba[W|(Z,V)]\,\buni_{Z,W}[\Gc^{+}]\,\, \times}{\partiso{\Vect(\Ec^{+})}{\Vect(\Hc^{+})}{P^{-1}}{J(\bbnu\uparrow^{m})PJ(\bbmu \uparrow^{m})} }\\
&=\!\!\!\!\!\!\!\sum_{\substack{Z,V,W \\ \Ec^{+},\Fc^{+},\Gc^{+},\Hc^{+}\\ \Vect(\Fc^{+})=Z,\,\,\Vect(\Fc)=V \\ \Vect(\Gc^{+})=Z,\,\,\Vect(\Gc)=W}}\!\!\!\!\!\!\!\!\binom{ \proba[m]\, \buni_{m}[Z]\,\buni_{m}[\Ec^{+},\Hc^{+}]\,\buni_{Z}[\Fc^{+}]\,\proba[W|(Z,V)]\,\buni_{Z,W}[\Gc^{+}]\,\, \times}{\partiso{\Vect(\Ec^{+})}{\Vect(\Hc^{+})}{I_{m}}{P^{-1}J(\bbnu\uparrow^{m})PJ(\bbmu \uparrow^{m})} }
\end{align*}
where $P$ stands for the matrix $\matr_{\Fc^{+}}(\Gc^{+})$. Notice that this matrix $P$ is the only part depending on $V,W,\Fc^{+}$ and $\Gc^{+}$ in the last average of partial isomorphisms. Therefore, the sum will be simplified if we are able to identify the law of $P$ under the probability $\buni_{Z}[\Fc^{+}]\,\proba[W|(Z,V)]\,\buni_{Z,W}[\Gc^{+}]$, which is a conditional probability depending on the random variable $Z$. We claim that this law is just the uniform law on matrices in $\GL(m,\For_{q})$. Remember that  $\Fc^{+}$ is chosen uniformly, that the law of $W$ knowing $Z$ and $V$ is the uniform law on spaces with dimension $l$ and with $V+W=Z$, and that $\Gc^{+}$ is then the uniform law on bases of $Z$ with $\Vect(\Gc)=W$. This implies that:\vspace{2mm}
\begin{enumerate} 
\item The probability of a given matrix $P$ is invariant by conjugation by elements of $\GL(m,\For_{q})$:
$$\proba[\matr_{\Fc^{+}}(\Gc^{+})=P]=\proba[\matr_{g(\Fc^{+})}(g(\Gc^{+}))=P]=\proba[\matr_{\Fc^{+}}(\Gc^{+})=gPg^{-1}].$$
\item For $V$ fixed, the probability conditionally to $Z$ and $V$ of $\Gc^{+}$ is invariant by action of the parabolic subgroup $\mathrm{T}(k,m-k,\For_{q})$ of block-triangular matrices 
$$\begin{pmatrix}
K_{1}&R \\ 0 & K_{2}
\end{pmatrix}\quad \text{with }K_{1} \in \GL(k,\For_{q}),\,\,K_{2} \in \GL(m-k,\For_{q}),\,\,R \in \mathrm{M}(k\times(m-k),\For_{q}),$$
since this is the stabilizer of $V$ inside $\GL(m,\For_{q})$.\vspace{2mm}
\end{enumerate}
If $k=0$ or $m$ the second point proves the claim. Otherwise, the group that leaves invariant the probability $\proba[P]$ is a subgroup of $\GL(m,\For_{q})$ that contains the maximal parabolic subgroup $\mathrm{T}(k,m-k,\For_{q})$, but also all its conjugates. Therefore it contains strictly $\mathrm{T}(k,m-k,\For_{q})$, so it can only be $\GL(m,\For_{q})$ itself, and the claim is shown in full generality.\bigskip

We can finally write:
\begin{align*}\widetilde{A}_{\bbmu,n}*\widetilde{A}_{\bbnu,n}&= \sum_{\substack{\Ec^{+},\Hc^{+}\\ P \in \GL(m,\For_{q})}} \frac{\proba[m]\,\buni_{m}[\Ec^{+},\Hc^{+}]}{\card \,\GL(m,\For_{q})}\,\partiso{\Vect(\Ec^{+})}{\Vect(\Hc^{+})}{I_{m}}{P^{-1}J(\bbnu\uparrow^{m})PJ(\bbmu \uparrow^{m})} \\
&=\sum_{\substack{\Ec^{+},\Hc^{+}\\ P \in \GL(m,\For_{q})}} \frac{\proba[m]\,\buni_{m}[\Ec^{+},\Hc^{+}]}{\card \,\GL(m,\For_{q})}\,\partiso{\Vect(\Ec^{+})}{\Vect(\Hc^{+})}{I_{m}}{J(\bbmu \uparrow^{m})P^{-1}J(\bbnu\uparrow^{m})P}\\
&=\sum_{\substack{\Fc^{+},\Hc^{+}\\ P \in \GL(m,\For_{q})}} \frac{\proba[m]\,\buni_{m}[\Fc^{+},\Hc^{+}]}{\card \,\GL(m,\For_{q})}\,\partiso{\Vect(\Fc^{+})}{\Vect(\Hc^{+})}{P}{J(\bbmu \uparrow^{m})P^{-1}J(\bbnu\uparrow^{m})}\\
&=\sum_{\substack{\Fc^{+},\Gc^{+}\\ P \in \GL(m,\For_{q})}} \frac{\proba[m]\,\buni_{m}[\Fc^{+},\Gc^{+}]}{\card \,\GL(m,\For_{q})}\,\partiso{\Vect(\Fc^{+})}{\Vect(\Gc^{+})}{I_{m}}{PJ(\bbmu \uparrow^{m})P^{-1}J(\bbnu\uparrow^{m})}\\
&=\sum_{\substack{\Fc^{+},\Gc^{+}\\ P \in \GL(m,\For_{q})}} \frac{\proba[m]\,\buni_{m}[\Fc^{+},\Gc^{+}]}{\card \,\GL(m,\For_{q})}\,\partiso{\Vect(\Fc^{+})}{\Vect(\Gc^{+})}{I_{m}}{P^{-1}J(\bbmu \uparrow^{m})PJ(\bbnu\uparrow^{m})} \\
&= \widetilde{A}_{\bbnu,n}*\widetilde{A}_{\bbmu,n}
\end{align*}
by using on the second line the fact that $\sum_{P} P^{-1}J(\bbnu\!\!\uparrow ^{m})P$ commutes with everyone in $\C\GL(m,\For_{q})$, because it is a multiple of a conjugacy class. 
\end{proof}\bigskip

\begin{remark}
Starting from the last computation in the proof of commutativity, one can give a useful expression for the product of two normalized classes $\widetilde{A}_{\bbmu,n}$ and $\widetilde{A}_{\bbnu,n}$: this is
$$\widetilde{A}_{\bbmu,n}*\widetilde{A}_{\bbnu,n}=\sum_{\substack{\Ec^{+},\Hc^{+}\\t(G)=\bbmu\uparrow^{m},\,\,t(H)=\bbnu\uparrow^{m}}} \frac{\proba[m]\,\buni_{m}[\Ec^{+},\Hc^{+}]}{(\card \,C_{\bbmu\uparrow^{m}})\,(\card \,C_{\bbnu\uparrow^{m}})}\,\partiso{\Vect(\Ec^{+})}{\Vect(\Hc^{+})}{I_{m}}{HG}.$$
With this expansion, it becomes obvious that $\widetilde{A}_{\bbmu,n}*\widetilde{A}_{\bbnu,n}=\widetilde{A}_{\bbnu,n}*\widetilde{A}_{\bbmu,n}$, because in any group algebra $\C\GL(m,\For_{q})$, $\widetilde{C}_{\bbmu \uparrow^{m}}*\widetilde{C}_{\bbnu \uparrow^{m}}=\widetilde{C}_{\bbnu \uparrow^{m}}*\widetilde{C}_{\bbmu \uparrow^{m}}$.
\end{remark}
\bigskip

\subsection{The projective limit $\mathscr{Z}(\infty,\For_{q})$} We focus now on the construction of a projective limit of the subalgebras of invariants (in the category of graded algebras). The main tool is the following:
\begin{proposition}
Consider the linear maps
\begin{align*}
\phi^{n+p}_{n} : \mathscr{A}(n+p,\For_{q}) & \to \mathscr{A}(n,\For_{q})\\
\partiso{V}{W}{g_{1}}{g_{2}} &\mapsto \begin{cases}
q^{pk}\, \frac{(q^{-1})_{n+p}\,(q^{-1})_{n-k}}{(q^{-1})_{n+p-k}\,(q^{-1})_{n}}\,\partiso{V}{W}{g_{1}}{g_{2}}&\text{if }V+W \subset (\For_{q})^{n}, \\
0 & \text{otherwise},
\end{cases}
\end{align*}
$k$ denoting the degree of the partial isomorphism. Their restrictions $\phi^{n+p}_{n} : \mathscr{Z}(n+p,\For_{q})  \to \mathscr{Z}(n,\For_{q})$ are morphisms of commutative algebras and they satisfy the relations 
\begin{equation}
\phi_{n}^{n+p}\left(\widehat{A}_{\bbmu,n+p}\right)=\widehat{A}_{\bbmu,n},\label{magicbasis}
\end{equation}
where 
$$\widehat{A}_{\bbmu,n}=\frac{A_{\bbmu,n}}{\sqrt{\card\, A_{\bbmu,n}}}=\frac{(q^{-1})_{n-k}}{q^{nk}\,(q^{-1})_{n}}\,A_{\bbmu,n}.$$
\end{proposition}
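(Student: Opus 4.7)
The plan is to prove the two assertions in sequence: first the basis identity $\phi^{n+p}_n(\widehat{A}_{\bbmu, n+p}) = \widehat{A}_{\bbmu, n}$, which simultaneously shows that $\phi^{n+p}_n$ maps $\mathscr{Z}(n+p, \For_q)$ into $\mathscr{Z}(n, \For_q)$; then the morphism property. Commutativity of the codomain is already granted by Proposition \ref{ziscommutative}.

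The first identity is a direct computation. By definition, $\phi^{n+p}_n$ annihilates every partial isomorphism whose underlying spaces fail to lie inside $(\For_q)^n$ and scales the survivors by the prescribed factor. In the defining sum of $A_{\bbmu, n+p}$, the surviving free families are exactly those contained in $(\For_q)^n$, so $\phi^{n+p}_n(A_{\bbmu, n+p}) = q^{pk}\,\frac{(q^{-1})_{n+p}(q^{-1})_{n-k}}{(q^{-1})_{n+p-k}(q^{-1})_n}\,A_{\bbmu, n}$. Substituting $\widehat{A}_{\bbmu, N} = \frac{(q^{-1})_{N-k}}{q^{Nk}(q^{-1})_N}\,A_{\bbmu, N}$ on both sides yields the equality after cancellation. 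Since $(A_{\bbmu, n})_{|\bbmu| \leq n}$ spans $\mathscr{Z}(n, \For_q)$, this also shows that $\phi^{n+p}_n$ restricts to a linear map $\mathscr{Z}(n+p, \For_q) \to \mathscr{Z}(n, \For_q)$.

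For the morphism property, my plan is to show that $\widehat{A}_{\bblambda, N} * \widehat{A}_{\bbmu, N}$, when expanded in the hatted basis of $\mathscr{Z}(N, \For_q)$, has coefficients independent of $N$ (once $N$ is large enough for the relevant polypartitions to fit). Starting from the expression in the Remark following Proposition \ref{ziscommutative}, the product $\widetilde{A}_{\bblambda, N} * \widetilde{A}_{\bbmu, N}$ is a weighted sum indexed by $m$, by pairs of free families $\Ec^+, \Hc^+$ of size $m$ in $(\For_q)^N$, and by $G \in C_{\bblambda \uparrow^m}$, $H \in C_{\bbmu \uparrow^m}$, of partial isomorphisms $\partiso{\Vect \Ec^+}{\Vect \Hc^+}{I_m}{HG}$. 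Grouping by the type $\bbnu$ of $HG$ brings in the structure constants $a^\bbnu_{\bbmu \uparrow^m, \bblambda \uparrow^m}(m)$ of $\mathrm{Z}(m, \For_q)$, and a simple orbit-stabilizer count identifies $\sum_{\Ec^+, \Hc^+, g \in C_\bbnu} \partiso{\Vect \Ec^+}{\Vect \Hc^+}{I_m}{g}$ with $(\card C_\bbnu)\,A_{\bbnu, N}$. Converting to the hatted normalization via $A_{\bbnu, N} = N_{N, m}\,\widehat{A}_{\bbnu, N}$, with $N_{N, j} = q^{Nj}(q^{-1})_N/(q^{-1})_{N-j}$ the number of free families of size $j$ in $(\For_q)^N$, the only remaining $N$-dependent prefactor is $\frac{N_{N, k} N_{N, l}}{N_{N, m}}\,\proba_N[m]$, and a direct Pochhammer manipulation (the exponents of $q$ combine as $N(k+l-m) + (k+l-m)(m-N) = m(k+l-m)$, and the symbols $(q^{-1})_N$, $(q^{-1})_{N-k}$, $(q^{-1})_{N-l}$, $(q^{-1})_{N-m}$ all cancel pairwise) shows that this prefactor equals $q^{m(k+l-m)}(q^{-1})_k (q^{-1})_l / [(q^{-1})_{k+l-m}(q^{-1})_{m-k}(q^{-1})_{m-l}]$, a rational function of $q$ alone. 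Applying $\phi^{n+p}_n$ to the expansion at $N = n+p$ and invoking the first part then yields exactly $\widehat{A}_{\bblambda, n} * \widehat{A}_{\bbmu, n}$: the contributions with $|\bbnu| > n$ are annihilated, consistent with their absence from the expansion at $N = n$. The main technical obstacle is precisely this Pochhammer cancellation; once in hand, the morphism property is formal. The specific normalizations chosen in $\widehat{A}$ and in the prefactor of $\phi^{n+p}_n$ are designed exactly to make this cancellation occur.
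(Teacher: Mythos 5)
Your proposal is correct, and its computational heart --- the Pochhammer cancellation showing that $\frac{N_{N,k}N_{N,l}}{N_{N,m}}\proba_N[m]$ collapses to $q^{m(k+l-m)}\,\frac{(q^{-1})_k(q^{-1})_l}{(q^{-1})_{k+l-m}(q^{-1})_{m-k}(q^{-1})_{m-l}}$ --- is exactly the cancellation the paper performs. The organization differs in a useful way, though. The paper computes $\phi_n^{n+p}(\widehat{A}_{\bblambda,n+p}*\widehat{A}_{\bbmu,n+p})$ and separately $\widehat{A}_{\bblambda,n}*\widehat{A}_{\bbmu,n}$ as weighted sums of individual partial isomorphisms $\partiso{\Vect(\Ec^+)}{\Vect(\Hc^+)}{I_m}{HG}$, and matches coefficients term by term; only in the following paragraph does it derive the $n$-independence of the structure constants $S_{\bblambda\bbmu}^{\bbnu}$ as a corollary of the morphism property. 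You invert this: you expand the product directly in the $\widehat{A}_{\bbnu}$ basis via an orbit-stabilizer identification $\sum_{\Ec^+,\Hc^+,g\in C_\bbnu}\partiso{\Vect\Ec^+}{\Vect\Hc^+}{I_m}{g}=(\card C_\bbnu)\,A_{\bbnu,N}$ (a small extra lemma the paper doesn't state, but it checks out: the left side hits each partial isomorphism of type $\bbnu$ with multiplicity $\card\GL(m,\For_q)$, and $A_{\bbnu,N}$ hits it with multiplicity the centralizer order), observe that the resulting structure constants are visibly $N$-independent, and read the morphism property off from that together with the basis relation. This makes transparent \emph{why} the normalizations in $\widehat{A}$ and $\phi_n^{n+p}$ are chosen as they are, and it absorbs the projective-limit construction of the next paragraph of the paper essentially for free. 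One small point worth being careful about when fleshing this out: your orbit-stabilizer count produces $(\card C_\bbnu)\,A_{\bbnu,N}$ only after summing over all $m\times m$ matrices $g$ of type $\bbnu$, so you should confirm (as you implicitly do) that for each fixed $\Ec^+,\Hc^+$ the sum $\sum_{G,H}HG$ indeed decomposes through the center $Z(\C\GL(m,\For_q))$, which is where commutativity in $m$ variables enters.
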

\begin{proof}
To avoid any ambiguity, we shall precise by indices the notations of the proof of Proposition \ref{ziscommutative}. Thus, we shall denote $\proba_{k,l,n}[m]$ the probability of $m$ in the previous probabilistic scheme and in $(\For_{q})^{n}$, and $\buni_{k,n}[\Ec]$ the uniform probability over free family of size $k$ in $(\For_{q})^{n}$. Fix two polypartitions $\bbmu$ and $\bbnu$ of sizes $k$ and $l$. If $k$ or $l$ is strictly bigger than $n$, then $\phi_{n}^{n+p}(\widehat{A}_{\bbmu,n+p})$ or $\phi_{n}^{n+p}(\widehat{A}_{\bbnu,n+p})$ is equal to zero, and on the other hand, every partial isomorphism appearing in $\widehat{A}_{\bbmu,n+p}*\widehat{A}_{\bbnu,n+p}$ will have degree bigger than $\max(k,l)>n$, so one has
$$\phi_{n}^{n+p}\left(\widehat{A}_{\bbmu,n+p}*\widehat{A}_{\bbnu,n+p}\right)=0 =\phi_{n}^{n+p}\left(\widehat{A}_{\bbmu,n+p}\right)*\phi_{n}^{n+p}\left(\widehat{A}_{\bbnu,n+p}\right).$$
Suppose now $k\leq n$ and $l \leq n$. We compute $\phi_{n}^{n+p}(\widehat{A}_{\bbmu,n+p}*\widehat{A}_{\bbnu,n+p})$ as follows:
$$\phi_{n}^{n+p}\left(\widehat{A}_{\bbmu,n+p}*\widehat{A}_{\bbnu,n+p}\right)=\frac{q^{(n+p)(k+l)}\,((q^{-1})_{n+p})^{2}}{(q^{-1})_{n+p-k}\,(q^{-1})_{n+p-l}}\,\,\phi_{n}^{n+p}\left(\widetilde{A}_{\bbmu,n+p}*\widetilde{A}_{\bbnu,n+p}\right)$$
and
\begin{align*}
&\phi_{n}^{n+p}\left(\widetilde{A}_{\bbmu,n+p}*\widetilde{A}_{\bbnu,n+p}\right)\\
&=\sum_{\substack{\Ec^{+},\Hc^{+}\\t(G)=\bbmu\uparrow^{m}\\t(H)=\bbnu\uparrow^{m}}} \!\!\frac{\proba_{k,l,n+p}[m]\,\buni_{m,n+p}[\Ec^{+},\Hc^{+}]}{\card (C_{\bbmu\uparrow^{m}}\times C_{\bbnu\uparrow^{m}})}\,\phi_{n}^{n+p}\partiso{\Vect(\Ec^{+})}{\Vect(\Hc^{+})}{I_{m}}{HG},
\end{align*}
where the families $\Ec^{+}$ and $\Hc^{+}$ run \emph{a priori} over free families of size $m$ in $(\For_{q})^{n+p}$, but in fact over free families of size $m$ in $(\For_{q})^{n}$, since otherwise $\phi_{n}^{n+p}\partiso{\Vect(\Ec^{+})}{\Vect(\Hc^{+})}{I_{m}}{HG}$ vanishes. The quantity $\proba_{k,l,n+p}[m]\,\buni_{m,n+p}[\Ec^{+},\Hc^{+}]$ is equal to 
\begin{align*}&\frac{ q^{(k+l-m)(m-n-p)}\,(q^{-1})_{n+p-k}\,(q^{-1})_{n+p-l}\,(q^{-1})_{k}\,(q^{-1})_{l}}{(q^{-1})_{k+l-m}\,(q^{-1})_{n+p-m}\,(q^{-1})_{n+p}\,(q^{-1})_{m-k}\,(q^{-1})_{m-l}}\,\left(\frac{(q^{-1})_{n+p-m}}{q^{m(n+p)}\,(q^{-1})_{n+p}}\right)^{2}\\
&=\frac{ q^{(k+l-m)(m-n-p)}\,(q^{-1})_{n+p-k}\,(q^{-1})_{n+p-l}\,(q^{-1})_{n+p-m}\,(q^{-1})_{k}\,(q^{-1})_{l}}{((q^{-1})_{n+p})^{3}\,(q^{-1})_{k+l-m}\,(q^{-1})_{m-k}\,(q^{-1})_{m-l}}\,\left(\frac{1}{q^{m(n+p)}}\right)^{2},\end{align*}
which multiplied by $\sqrt{(\card\,A_{\bbmu,n+p})\times (\card \,A_{\bbnu,n+p})}$ gives
$$\frac{ q^{(k+l-m-n-p)m}\,(q^{-1})_{n+p-m}\,(q^{-1})_{k}\,(q^{-1})_{l}}{(q^{-1})_{n+p}\,(q^{-1})_{k+l-m}\,(q^{-1})_{m-k}\,(q^{-1})_{m-l}}.$$
Multiplying again by the coefficient in the statement of the proposition, we get:
\begin{align}&\phi_{n}^{n+p}\left(\widehat{A}_{\bbmu,n+p}*\widehat{A}_{\bbnu,n+p}\right)\nonumber\\
&=\sum_{\substack{\Ec^{+},\Hc^{+}\\t(G)=\bbmu\uparrow^{m}\\t(H)=\bbnu\uparrow^{m}}} \,\frac{ q^{(k+l-m-n)m}\,(q^{-1})_{k}\,(q^{-1})_{l}\,(q^{-1})_{n-m}}{(q^{-1})_{k+l-m}\,(q^{-1})_{m-k}\,(q^{-1})_{m-l}\,(q^{-1})_{n}}\, \frac{\partiso{\Vect(\Ec^{+})}{\Vect(\Hc^{+})}{I_{m}}{HG}}{\card (C_{\bbmu\uparrow^{m}}\times C_{\bbnu\uparrow^{m}})}.\label{systemaps}\end{align}
On the other hand,
\begin{align*}
\phi_{n}^{n+p}\left(\widehat{A}_{\bbmu,n+p}\right)&=\frac{q^{k(n+p)}\,(q^{-1})_{n+p}}{(q^{-1})_{n+p-k}}\,\,\phi_{n}^{n+p}\left(\widetilde{A}_{\bbmu,n+p}\right)\\
&=\frac{q^{k(n+p)}\,(q^{-1})_{n+p}}{(q^{-1})_{n+p-k}}\,\sum_{\substack{\Ec,\Fc\\t(G)=\bbmu}}\frac{\buni_{k,n+p}[\Ec,\Fc]}{\card\, C_{\bbmu}} \,\,\phi_{n}^{n+p}\partiso{\Vect(\Ec)}{\Vect(\Fc)}{I_{k}}{G}\\
&=\sum_{\substack{\Ec,\Fc\\t(G)=\bbmu}}\frac{q^{-nk}\,(q^{-1})_{n-k}}{(q^{-1})_{n}} \,\frac{\partiso{\Vect(\Ec)}{\Vect(\Fc)}{I_{k}}{G}}{\card\, C_{\bbmu}},
\end{align*}
where again the families $\Ec$ and $\Fc$ run \emph{a priori} over free families of size $k$ in $(\For_{q})^{n+p}$, but in fact over free families of size $k$ in $(\For_{q})^{n}$, since otherwise $\phi_{n}^{n+p}\partiso{\Vect(\Ec)}{\Vect(\Fc)}{I_{k}}{G}$ vanishes. Therefore, one can rewrite this as:
\begin{align*}\phi_{n}^{n+p}\left(\widehat{A}_{\bbmu,n+p}\right)&=\frac{q^{nk}\,(q^{-1})_{n}}{(q^{-1})_{n-k}}\,\sum_{\substack{\Ec,\Fc\\t(G)=\bbmu}} \frac{\buni_{k,n}[\Ec,\Fc]}{\card\, C_{\bbmu}}\,\,\partiso{\Vect(\Ec)}{\Vect(\Fc)}{I_{k}}{G}\\
&=\frac{q^{nk}\,(q^{-1})_{n}}{(q^{-1})_{n-k}}\,\widetilde{A}_{\bbmu,n}=\widehat{A}_{\bbmu,n}.\end{align*}
This proves in particular that $\phi_{n}^{n+p}(\mathscr{Z}(n+p,\For_{q}))=\mathscr{Z}(n,\For_{q})$. Finally,
$$\widetilde{A}_{\bbmu,n}*\widetilde{A}_{\bbnu,n}=\sum_{\substack{\Ec^{+},\Hc^{+}\\t(G)=\bbmu\uparrow^{m}\\
t(H)=\bbnu\uparrow^{m}}} \frac{\proba_{k,l,n}[m]\,\buni_{m,n}[\Ec^{+},\Hc^{+}]}{(\card \,C_{\bbmu\uparrow^{m}})\,(\card \,C_{\bbnu\uparrow^{m}})}\,\partiso{\Vect(\Ec^{+})}{\Vect(\Hc^{+})}{I_{m}}{HG};$$
the quantity $\proba_{k,l,n}[m]\,\buni_{m,n}[\Ec^{+},\Hc^{+}]$ is equal to 
$$\frac{ q^{(k+l-m)(m-n)}\,(q^{-1})_{n-k}\,(q^{-1})_{n-l}\,(q^{-1})_{n-m}\,(q^{-1})_{k}\,(q^{-1})_{l}}{((q^{-1})_{n})^{3}\,(q^{-1})_{k+l-m}\,(q^{-1})_{m-k}\,(q^{-1})_{m-l}}\,\left(\frac{1}{q^{mn}}\right)^{2},$$
which multiplied by $\sqrt{(\card\,A_{\bbmu,n})\times (\card \,A_{\bbnu,n})}$ gives
$$\frac{ q^{(k+l-m-n)m}\,(q^{-1})_{n-m}\,(q^{-1})_{k}\,(q^{-1})_{l}}{(q^{-1})_{n}\,(q^{-1})_{k+l-m}\,(q^{-1})_{m-k}\,(q^{-1})_{m-l}}.$$
This is exactly the coefficient of Equation \eqref{systemaps}, so the morphism property is shown --- notice that if $$\mathbb{1}_{n}=\partiso{\{0\}}{\{0\}}{\id}{\id}=A_{\emptyset}=\widehat{A}_{\emptyset}$$
is the unity of $\mathscr{A}(n,\For_{q})$, then one has indeed also $\phi_{n}^{n+p}(\mathbb{1}_{n+p})=\mathbb{1}_{n}$. The compatibility between the maps $\phi_{n}^{n+p}$ is obvious from the relations \eqref{magicbasis}, so one gets as announced an inverse system of graded commutative algebras
$$\begin{CD} \cdots @>>>\mathscr{Z}(n+2,\For_{q})  @>>>\mathscr{Z}(n+1,\For_{q})  @>>>\mathscr{Z}(n,\For_{q}) @>>> \cdots \end{CD}\,\,.$$
\end{proof}
\bigskip

From there everything gets easy. With $|\bblambda|=k$ and $|\bbmu|=l$, denote $S_{\bblambda\bbmu}^{\bbnu}$ the structure coefficients given by the product
$$\widehat{A}_{\bblambda,k+l} * \widehat{A}_{\bbmu,k+l}=\sum_{|\bbnu|\leq k+l}S_{\bblambda\bbmu}^{\bbnu}\,\widehat{A}_{\bbnu,k+l}$$
in $\mathscr{Z}(k+l,\For_{q})$. If $|\bbnu|>n$, we convene that $\widehat{A}_{\bbnu,n}=0$. Since the maps $\phi_{n}^{k+l}$ are morphisms of algebras, for every $n \leq k+l$, one also has
$$\widehat{A}_{\bblambda,n} * \widehat{A}_{\bbmu,n}=\sum_{|\bbnu|\leq k+l}S_{\bblambda\bbmu}^{\bbnu}\,\widehat{A}_{\bbnu,n}.$$
 We claim that this identity still holds for $n >k+l$. Indeed, denote $S_{\bbmu\bbnu}^{\bblambda}(n)$ the structure coefficients in $\mathscr{Z}(n,\For_{q})$, such that
$$\widehat{A}_{\bblambda,n} * \widehat{A}_{\bbmu,n}=\sum_{|\bbnu|\leq n}S_{\bblambda\bbmu}^{\bbnu}(n)\,\widehat{A}_{\bbnu,n}.$$
The grading ensures that the sum is indeed over polypartitions of size less than $k+l$. By applying the map $\phi_{k+l}^{n}$ to the previous identity, we get
$$\widehat{A}_{\bblambda,k+l} * \widehat{A}_{\bbmu,k+l}=\sum_{|\bbnu|\leq k+l}S_{\bblambda\bbmu}^{\bbnu}(n)\,\widehat{A}_{\bbnu,k+l}.$$
Since $(A_{\bbnu})_{|\bbnu|\leq k+l}$ is a basis of $\mathscr{Z}(k+l,\For_{q})$, this shows that $S_{\bblambda\bbmu}^{\bbnu}(n)=S_{\bblambda\bbmu}^{\bbnu}(k+l)=S_{\bblambda\bbmu}^{\bbnu}$. As a consequence:
\begin{theorem} 
The graded commutative algebra $\mathscr{Z}(\infty,\For_{q})$ with linear basis $\widehat{A}_{\bbmu}$, $\bbmu \in \bigsqcup_{n=0}^{\infty}\ym(n,\For_{q})$ and with product
$$\widehat{A}_{\bblambda} * \widehat{A}_{\bbmu}=\sum_{|\bbnu|\leq |\bblambda| + |\bbmu|}S_{\bblambda\bbmu}^{\bbnu}\,\widehat{A}_{\bbnu}$$
is well-defined and forms a projective limit of the $\mathscr{Z}(n,\For_{q})$'s in the category of graded algebras. 
\end{theorem}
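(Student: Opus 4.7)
The plan is to observe that the statement is a formal consequence of what has just been established, and to organize the verification in three short stages. First, I would set $\mathscr{Z}(\infty,\For_q) = \bigoplus_{\bbmu}\C\,\widehat{A}_\bbmu$ with grading $\deg \widehat{A}_\bbmu = |\bbmu|$, and extend the stated rule bilinearly; well-posedness is automatic since, for fixed $\bblambda$ and $\bbmu$, the index set $\{\bbnu : |\bbnu|\leq|\bblambda|+|\bbmu|\}$ is finite. The only nontrivial input is that the scalars $S^\bbnu_{\bblambda\bbmu}$ do not depend on the level $n$ at which one computes the product $\widehat{A}_{\bblambda,n}*\widehat{A}_{\bbmu,n}$, and this stability was already proved in the paragraph immediately preceding the statement, from three ingredients: the fact that $\widehat{A}_{\bbnu,n}$ is homogeneous of degree $|\bbnu|$; the identity $\phi_{k+l}^n(\widehat{A}_{\bbnu,n}) = \widehat{A}_{\bbnu,k+l}$ for $|\bbnu|\leq k+l$; and the linear independence of the family $(\widehat{A}_{\bbnu,k+l})_{|\bbnu|\leq k+l}$ in $\mathscr{Z}(k+l,\For_q)$.

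Second, I would deduce associativity, commutativity, and the existence of the unit $\widehat{A}_\emptyset$ in $\mathscr{Z}(\infty,\For_q)$ by a transport argument. Given three basis elements $\widehat{A}_\bbmu,\widehat{A}_\bbnu,\widehat{A}_{\bbmu'}$ of total size at most $N$, both sides of each identity to be proved (associativity, commutativity, neutrality of $\widehat{A}_\emptyset$) expand as linear combinations of $\widehat{A}_\bbsigma$'s with $|\bbsigma|\leq N$; by stability, their coefficients coincide with those of the corresponding products in $\mathscr{Z}(N,\For_q)$ expressed in the linearly independent family $(\widehat{A}_{\bbsigma,N})_{|\bbsigma|\leq N}$. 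Since $\mathscr{Z}(N,\For_q)$ is a commutative associative algebra with unit $\widehat{A}_{\emptyset,N}=\mathbb{1}_N$ by Theorem \ref{mainan} and Proposition \ref{ziscommutative}, the identities transfer back to $\mathscr{Z}(\infty,\For_q)$.

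Third, I would define $\Pi_n : \mathscr{Z}(\infty,\For_q) \to \mathscr{Z}(n,\For_q)$ by $\Pi_n(\widehat{A}_\bbmu) = \widehat{A}_{\bbmu,n}$ when $|\bbmu|\leq n$ and $0$ otherwise. Each $\Pi_n$ is a morphism of graded algebras by the same stability of structure coefficients, and the relation $\phi_n^{n+p}\circ \Pi_{n+p} = \Pi_n$ is just \eqref{magicbasis}. The universal property is then immediate: since for every $n \geq k$ the map $\phi_k^n$ restricts to a linear isomorphism from $\Vect(\widehat{A}_{\bbmu,n} : |\bbmu|\leq k)$ onto the analogous subspace of $\mathscr{Z}(k,\For_q)$, any compatible family $(\psi_n : B \to \mathscr{Z}(n,\For_q))_n$ of graded algebra morphisms factors uniquely through $\mathscr{Z}(\infty,\For_q)$ degree by degree. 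The only substantive step in the whole argument is the stability of the $S^\bbnu_{\bblambda\bbmu}$'s, which has already been carried out using the grading of $\mathscr{Z}(n,\For_q)$ and the morphism property of the $\phi_n^{n+p}$'s; everything else is a formal packaging.
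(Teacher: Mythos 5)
Your proposal is correct and follows essentially the same route as the paper: the only substantive input is the $n$-independence of the structure coefficients $S^{\bbnu}_{\bblambda\bbmu}$, established exactly as you describe from the grading, the morphism property of the $\phi_n^{n+p}$'s, and the linear independence of the $\widehat{A}_{\bbnu,k+l}$'s, after which the theorem is formal. Your explicit transport of associativity, commutativity and the unit, and the verification of the universal property degree by degree, are the same formal packaging the paper leaves implicit (only note that the paper reserves the symbol $\Pi_n$ for the composite $\pi_n\circ\phi_n^\infty$ landing in $Z(\C\GL(n,\For_q))$, not for the canonical projection to $\mathscr{Z}(n,\For_q)$).
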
\bigskip

The structure coefficients $S_{\bblambda\bbmu}^{\bbnu}$ are rational numbers: they are obviously in $\Q(q)$ since every product in $\mathscr{A}(k+l,\For_{q})$ involves averages with coefficients in $\Q(q)$, and assuming $q$ fixed, $\Q(q)$ is identified with $\Q$. Denote $\Pi_{n}=\pi_{n} \circ \phi_{n}^{\infty}$, where $\phi_{n}^{\infty} : \mathscr{Z}(\infty,\For_{q}) \to \mathscr{Z}(n,\For_{q})$ is the canonical projection, and $\pi_{n}$ is the map from Proposition \ref{projection}. This is a morphism of algebras from $\mathscr{Z}(\infty,\For_{q})$ to $Z(\C\GL(n,\For_{q}))$, with 
$$\Pi_{n}\left(\widehat{A}_{\bbmu}\right) = \begin{cases} q^{n(2k_{1}-k)} \,\frac{q^{2k(k-k_{1})}\,(q^{-1})_{k}\,(q^{-1})_{n-k+k_{11}}}{(q^{-1})_{k_{11}}\,(q^{-1})_{n-k}}\frac{C_{\bbmu\uparrow^{n}}}{\card\,C_{\bbmu}}  &\text{if }|\bbmu|=k \leq n\\ 0 &  \text{otherwise}.\end{cases}$$
Fix two polypartitions $\bblambda$ and $\bbmu$ of sizes $k$ and $l$, and with $k_{11}=m_1(\lambda(X-1))=0$ and $l_{11}=m_1(\mu(X-1))=0$; hence, they correspond to classes of partial isomorphisms that are not trivial extensions of smaller partial isomorphisms. This is the analogue of the restriction ``without fixed points'' for permutations in our statement of Farahat-Higman's theorem on page \pageref{fhig}. Notice however that one does not require $k_1=\ell(\lambda(X-1))$ or $l_1=\ell(\mu(X-1))$ to vanish; so, the isomorphisms considered may have non-zero fixed vectors, but no non-zero fixed vector with a stable complement subspace. Under the previous assumption, most of the previous formula simplifies and we get
\begin{align*}&q^{n(2k_{1}+2l_{1}-k-l)}\,q^{2k(k-k_{1})+2l(l-l_{1})}\,(q^{-1})_{k}\,(q^{-1})_{l}\,\frac{C_{\bblambda\uparrow^{n}}\times C_{\bbmu\uparrow^{n}}}{\card(C_{\bblambda}\times C_{\bbmu})} \\
&= \sum_{|\bbnu|=m\leq k+l} S_{\bblambda\bbmu}^{\bbnu}\,\, q^{n(2m_{1}-m)} \,\frac{q^{2m(m-m_{1})}\,(q^{-1})_{m}\,(q^{-1})_{n-m+m_{11}}}{(q^{-1})_{m_{11}}\,(q^{-1})_{n-m}}\frac{C_{\bbnu\uparrow^{n}}}{\card\,C_{\bbnu}}.\end{align*}
By putting everything independent from $n$ in modified structure coefficients 
$$s_{\bblambda\bbmu}^{\bbnu}= S_{\bblambda\bbmu}^{\bbnu}\,\frac{\card (C_{\bblambda}\times C_{\bbmu})}{\card\,C_{\bbnu}}\, \frac{q^{2m(m-m_{1})-2k(k-k_{1})-2l(l-l_{1})}\,(q^{-1})_{m}}{(q^{-1})_{k}\,(q^{-1})_{l}\,(q^{-1})_{m_{11}}},$$
we obtain finally:
$$C_{\bblambda\uparrow^{n}}\times C_{\bbmu\uparrow^{n}} = \sum_{|\bbnu|=m \leq k+l} s_{\bblambda\bbmu}^{\bbnu}\,q^{n((k-2k_{1})+(l-2l_{1})-(m-2m_{1}))} \, \frac{(q^{-1})_{n-m+m_{11}}}{(q^{-1})_{n-m}}\,C_{\bbnu\uparrow^{n}}.$$
This leads to a $\GL(n,\For_{q})$ version of a theorem of Farahat and Higman, which to our knowledge was not known before.
\begin{theorem}\label{generalFH}
Fix two polypartitions $\bblambda$ and $\bbmu$ of sizes $k$ and $l$, with $k_{11}=m_1(\lambda(X-1))=0$ and $l_{11}=m_1(\mu(X-1))=0$. There exists polynomials $p_{\bblambda\bbmu}^{\bbnu}(X)$ with rational coefficients such that for every $n \geq k+l$, in the center of the group algebra $\C\GL(n,\For_{q})$,
$$C_{\bblambda\uparrow^{n}}\times C_{\bbmu\uparrow^{n}} = \sum p_{\bblambda\bbmu}^{\bbnu}(q^{n})\,C_{\bbnu\uparrow^{n}},$$
where the sum runs over polypartitions $\bbnu$ of size $m \leq k+l$, and again with $m_{11}=m_1(\nu(X-1))=0$.
\end{theorem}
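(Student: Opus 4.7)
The plan is to combine the explicit identity derived in the discussion just before the theorem with a regrouping of the terms on its right-hand side. Concretely, I would begin from the identity in the projective limit
$$\widehat{A}_\bblambda \ast \widehat{A}_\bbmu = \sum_{|\bbnu|\leq k+l} S^{\bbnu}_{\bblambda\bbmu}\,\widehat{A}_\bbnu,$$
whose coefficients $S^\bbnu_{\bblambda\bbmu}$ lie a priori in $\Q(q)$ by inspection of the averages defining the product of partial isomorphisms, but are rational numbers independent of $n$ since $q$ is a fixed prime power. Applying the morphism $\Pi_n$ to both sides and substituting the explicit formula for $\Pi_n(\widehat{A}_\bbeta)$, the hypothesis $k_{11}=l_{11}=0$ collapses the corresponding Pochhammer ratios on the left, and after rearrangement one recovers exactly the explicit identity
$$C_{\bblambda\uparrow^{n}}\cdot C_{\bbmu\uparrow^{n}} = \sum_{|\bbnu|=m\leq k+l} s_{\bblambda\bbmu}^{\bbnu}\,q^{n((k-2k_{1})+(l-2l_{1})-(m-2m_{1}))}\,\frac{(q^{-1})_{n-m+m_{11}}}{(q^{-1})_{n-m}}\,C_{\bbnu\uparrow^{n}}$$
obtained just above the theorem, where the sum now runs over all polypartitions $\bbnu$ of size at most $k+l$.

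The next step addresses the redundancy in this labelling. If $\bbnu$ has $m_{11}>0$, removing its $m_{11}$ parts of size $1$ in $\nu(X-1)$ produces a canonical polypartition $\bbnu^{\ast}$ with $m_{11}(\bbnu^{\ast})=0$ satisfying $\bbnu^{\ast}\!\!\uparrow^{n}=\bbnu\!\!\uparrow^{n}$. I would regroup the sum according to this equivalence relation and collect every contribution attached to a given class $C_{\bbnu^{\ast}\uparrow^n}$. With $m^{\ast}=|\bbnu^{\ast}|$, $m_1^{\ast}=\ell(\nu^{\ast}(X-1))$ and $j=m_{11}$, so that $m=m^{\ast}+j$ and $m_1=m_1^{\ast}+j$, a direct substitution rewrites the contribution of $\bbnu$ as
$$s^{\bbnu}_{\bblambda\bbmu}\,q^{nE}\,\prod_{r=m^{\ast}}^{m^{\ast}+j-1}(q^n-q^r), \qquad E=(k-2k_{1})+(l-2l_{1})-(m^{\ast}-2m_1^{\ast}),$$
using the elementary identity $q^{nj}\,(q^{-1})_{n-m^{\ast}}/(q^{-1})_{n-m^{\ast}-j}=\prod_{r=m^{\ast}}^{m^{\ast}+j-1}(q^n-q^r)$, which is itself checked by writing out Pochhammer symbols. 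Summing over the finitely many $j\in\lle 0,k+l-m^{\ast}\rre$ attached to the same $\bbnu^{\ast}$ yields a polynomial expression $p^{\bbnu^{\ast}}_{\bblambda\bbmu}(q^n)$ in the variable $q^n$, with rational coefficients.

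The main obstacle I anticipate is verifying that the exponent $E$ is non-negative for every $\bbnu^{\ast}$ that actually appears with a nonzero coefficient, so that the statement truly produces polynomials in $q^n$ and not merely Laurent polynomials. A natural strategy is a combinatorial argument on dimensions, starting from the definition of the product in $\mathscr{A}(n,\For_q)$: every partial isomorphism appearing in $A_\bblambda\ast A_\bbmu$ arises by composing trivial extensions of representatives of $A_\bblambda$ and $A_\bbmu$ in a common space of dimension $m$, and the quantity $m-2m_1$, which measures the codimension of the largest fixed subspace admitting a stable complement, should be shown to be subadditive under such compositions. This mirrors the classical inequality $|\rho|-\ell(\rho)\leq(|\lambda|-\ell(\lambda))+(|\mu|-\ell(\mu))$ satisfied by partitions appearing in products of symmetric group conjugacy classes. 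Once this subadditivity is in hand, $E\geq 0$ for all relevant $\bbnu^{\ast}$, and the theorem follows directly from the regrouping above.
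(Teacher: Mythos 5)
Your reduction to the display
$$C_{\bblambda\uparrow^{n}}\cdot C_{\bbmu\uparrow^{n}} = \sum_{|\bbnu|=m\leq k+l} s_{\bblambda\bbmu}^{\bbnu}\,q^{n((k-2k_{1})+(l-2l_{1})-(m-2m_{1}))}\,\frac{(q^{-1})_{n-m+m_{11}}}{(q^{-1})_{n-m}}\,C_{\bbnu\uparrow^{n}}$$
and the regrouping by $\bbnu^{\ast}$ are exactly what the paper does, and your rewriting of each contribution as $s^{\bbnu}_{\bblambda\bbmu}\,q^{nE}\prod_{r=m^{\ast}}^{m^{\ast}+j-1}(q^n-q^r)$ is correct. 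The problem is the step you flag as the ``main obstacle'': the inequality $E\geq 0$ is \emph{false} in general, so the plan cannot be completed. Take $\bblambda=\bbmu=\{X-1:(2)\}$, so $k=l=2$, $k_1=l_1=1$, $k_{11}=l_{11}=0$, and $(k-2k_1)+(l-2l_1)=0$. The product of two regular unipotent elements of $\GL(2,\For_q)$ can be a regular semisimple element with eigenvalues $a,a^{-1}$ ($a\neq\pm1$): for instance $\bigl(\begin{smallmatrix}1&1\\0&1\end{smallmatrix}\bigr)\bigl(\begin{smallmatrix}1&0\\c&1\end{smallmatrix}\bigr)$ has trace $2+c$ and determinant $1$. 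Thus $\bbnu^{\ast}=\{X-a:(1),\,X-a^{-1}:(1)\}$ appears with nonzero coefficient; it has $m^{\ast}=2$, $m_1^{\ast}=0$, hence $m^{\ast}-2m_1^{\ast}=2$ and $E=-2<0$. So the quantity $m-2m_1$ is \emph{not} subadditive in the sense you need, and no dimension-count argument can establish $E\geq 0$.

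The paper proceeds differently, and the difference matters. It first observes that each $p_{\bblambda\bbmu}^{\bbnu^{\ast}}(X)$ is \emph{a priori} only a Laurent polynomial in $\Q[X,X^{-1}]$, exactly because $E$ may be negative. It then uses that the structure constants of the $\Z$-algebra $Z(\Z\GL(n,\For_q))$ are integers: hence $p_{\bblambda\bbmu}^{\bbnu^{\ast}}(q^n)\in\Z$ for all $n\geq k+l$, and with $q=p^e$ a $p$-adic valuation argument forces the negative-degree coefficients to vanish. Concretely, in your notation, the integrality constraint forces the polynomial $\sum_j s^{\bbnu_j}\prod_{r=m^{\ast}}^{m^{\ast}+j-1}(X-q^r)$ to be divisible by $X^{-E}$ whenever $E<0$ --- a nontrivial cancellation among the $s^{\bbnu_j}$ that you would otherwise have to verify by hand. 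This arithmetic step is the missing idea; once you replace the (false) $E\geq 0$ claim by the integrality-plus-valuation argument, the rest of your write-up goes through.
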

\begin{proof}
The $n$-dependent part of the coefficient of $C_{\bbnu\uparrow^{n}}$ in the previous expansion writes as
\begin{align*}&q^{n((k-2k_{1})+(l-2l_{1})-(m-2m_{1}))} \, \frac{(q^{-1})_{n-m+m_{11}}}{(q^{-1})_{n-m}} \\
&= q^{n((k-2k_{1})+(l-2l_{1})-(m-2m_{1}))}\,(1-q^{-(n-m+1)})(1-q^{-(n-m+2)})\cdots(1-q^{-(n-m+m_{11})}).
\end{align*}
This is \emph{a priori} a Laurent polynomial in $q^{n}$ with rational coefficients. Gathering together the polypartitions $\bbnu$ which give the same completed polypartition $\bbnu\!\!\uparrow^{n}$ (they differ by the number of parts $1$ in $\nu(X-1)$), we conclude from the previous discussion that
$$C_{\bblambda\uparrow^{n}}\times C_{\bbmu\uparrow^{n}} = \sum p_{\bblambda\bbmu}^{\bbnu}(q^{n})\,C_{\bbnu\uparrow^{n}},$$
where the sum is over the finite set of polypartitions $\bbnu$ with $|\bbnu|\leq |\bblambda|+|\bbmu|$ and $\nu(X-1)$ without parts of size $1$; and the coefficients $p_{\bblambda\bbmu}^{\bbnu}(q^{n})$ are rational Laurent polynomials in $\Q[X,X^{-1}]$. However, the group algebra of a finite group is defined over $\Z$, so for every integer $n$, $p_{\bblambda\bbmu}^{\bbnu}(q^{n})$ is an integer. With $q=p^{e}$, by looking at the $p$-valuation in $\Q$, one sees that $p_{\bblambda\bbmu}^{\bbnu}$ cannot have negative powers, so it is in fact a polynomial in $\Q[X]$.
\end{proof}
\bigskip

\section{Explicit computations with degree $1$ terms}\label{secdegree1}
In this section we make the previous discussion concrete by computing the polynomials $p_{\bblambda\bbmu}^{\bbnu}(q^{n})$ when $\bblambda$ and $\bbmu$ have degree $1$, hence correspond to irreducible polynomials $X-a$ and $X-b$ with $a,b \in (\For_{q})^{\times}$. The generic invariant $\widehat{A}_{\{X-a:1\}}$, that we shall abbreviate as $\widehat{A}_{X-a}$, has for projections in the algebras $\mathscr{Z}(n,\For_{q})$:
$$\widehat{A}_{X-a,n}=\frac{1}{q^{n}-1}\sum_{u,v \in (\For_{q})^{n}\setminus \{0\}} \partiso{u}{v}{1}{a}.$$
Here $\partiso{u}{v}{1}{a}$ means that one sends the vector $u$ to $v$ by the first arrow, and $v$ to $au$ by the second arrow. To compute the generic product $\widehat{A}_{X-a}*\widehat{A}_{X-b}$ in $\mathscr{Z}(\infty,\For_{q})$, it suffices to do so in $\mathscr{Z}(2,\For_{q})$ by the discussion of the previous section. Take two partial isomorphisms $\partiso{u}{v}{1}{a}$ and $\partiso{w}{x}{1}{b}$ in $\mathscr{I}(2,\For_{q})$. Among the $(q^{2}-1)^{4}$ possibilities for $u,v,w,x$, the vectors $v$ and $w$ are colinear in $(q^{2}-1)^{3}(q-1)$ cases, the factor $(q-1)$ corresponding to the possibilities for the factor of proportionnality $\alpha$ such that $w=\alpha v$. In all these cases, 
$$\partiso{u}{v}{1}{a} * \partiso{w}{x}{1}{b}=\partiso{u}{\alpha^{-1} x}{1}{ab}$$ 
has type $\{X-ab:1\}$. So, this situation contributes to a term 
\begin{equation}
(q^{2}-1)(q-1)\,\widetilde{A}_{X-ab}=(q-1)\,\widehat{A}_{X-ab}\label{colinear}
\end{equation} in $\widehat{A}_{X-a}*\widehat{A}_{X-b}$. In every other situation, $(\For_{q})^{2}=\Vect(v,w)$ and we have to compute trivial extensions. It should be noticed here that the form of $\pi_{2}(\partiso{u}{v}{1}{a})$ is different when $a=1$ and when $a \neq 1$. Therefore, we have several cases to treat separately, the most interesting being when $a \neq 1$ and $b \neq 1$. Suppose then $v$ and $w$ not colinear. One has
\begin{align*}\mathrm{R}_{\Vect(v)}^{\Vect(v,w)}\partiso{u}{v}{1}{a}&=\frac{1}{q^{2}(q-1)}\sum_{\substack{t \neq \alpha u \\ r \in \For_{q} }} \left( \Vect(u,t) \,\,\bigg|\,\, I_{2} \rightleftarrows \left(\begin{smallmatrix} a & (a-1)r \\ 0 & 1 \end{smallmatrix}\right) \,\,\bigg|\,\, \Vect(v,w)\right) \\
&=\frac{1}{q^{2}(q-1)}\sum_{\substack{t \neq \alpha u \\ r \in \For_{q} }} \left( \Vect(u,t) \,\,\bigg|\,\, I_{2} \rightleftarrows \left(\begin{smallmatrix} a & r \\ 0 & 1 \end{smallmatrix}\right) \,\,\bigg|\,\, \Vect(v,w)\right); \\ 
\mathrm{L}_{\Vect(w)}^{\Vect(v,w)}\partiso{w}{x}{1}{b}&= \frac{1}{q^{2}(q-1)}\sum_{\substack{y \neq \alpha x \\ s \in \For_{q} }} \left( \Vect(v,w) \,\,\bigg|\,\, I_{2} \rightleftarrows \left(\begin{smallmatrix} 1 & 0 \\ s & b \end{smallmatrix}\right) \,\,\bigg|\,\, \Vect(y,x)\right).
\end{align*}
So, outside the terms of \eqref{colinear}, the remaining part of the product $\widehat{A}_{X-a}*\widehat{A}_{X-b}$ is equal to
\begin{equation} \frac{1}{q^{3}(q-1)(q^{2}-1)}\sum_{\substack{u,t,x,y\\r,s}} \left( \Vect(u,t) \,\,\bigg| I_{2} \rightleftarrows \left(\begin{smallmatrix} a+rs & rb \\ s & b \end{smallmatrix}\right) \bigg|\,\, \Vect(y,x)\right) .\label{noncolinear} \end{equation}
\bigskip

\subsection{Irreducible polynomials of degree $2$ over $\For_{q}$}\label{irrpoly}
We then need to recognize irreducible polynomials of degree $2$ over $\For_{q}$. The theory is a bit different when $q$ is even and when $q$ is odd. Suppose to begin with that $q$ is odd. Then,
$$X^{2}+aX+b = \left(X+\frac{a}{2}\right)^{2}+\frac{4b-a^{2}}{4}$$
is irreducible if and only if $a^{2}-4b$ is not a square in $\For_{q}$. In $\For_{q}^{\times}$, there are $\frac{q-1}{2}$ squares identified by the Jacobi symbol $\jacobi{a}{q}=1$, and $\frac{q-1}{2}$ non-squares identified by the Jacobi symbol $\jacobi{a}{q}=-1$, $\jacobi{\cdot}{q}$ being a morphism from $(\For_{q})^{\times}$ to $\{\pm 1\}$. So: \vspace{2mm}
\begin{itemize}
\item $X^{2}+aX+b$ is irreducible if $\jacobi{a^{2}-4b}{q}=-1$;\vspace{2mm}
\item otherwise, it is equal to $(X-\frac{-a+\delta}{2})(X-\frac{-a-\delta}{2})$, where $\delta^{2}=a^{2}-4b$.\vspace{2mm}
\end{itemize}
This criterion has the following consequence. Suppose $b\neq 0$ fixed and consider the family of polynomials $P_{b}=\{X^{2}+aX+b,\,\,a\in\For_{q}\}$. Notice that 
\begin{align*}
\card \{a \in \For_{q}\,\,&|\,\, X^{2}+aX+b \text{ is irreducible}\}  = \card \left\{ a \in \For_{q} \,\,| \left(\frac{a^{2}-4b}{q}\right)=-1\right\}\\
&=q - \left\{ a \in \For_{q} \,\,|\,\, a^{2}-4b \text{ is a square}\right\} \\
&= q- \mathbb{1}_{\jacobi{b}{q}=1}  - \left\{ a \in \For_{q} \,\,|\,\, a^{2}-4b \text{ is a non-zero square}\right\} \\
&=q- \mathbb{1}_{\jacobi{b}{q}=1}  - \frac{q-1}{2}. 
\end{align*}
Therefore, $I_{b}=\{a \in \For_{q}\,\,|\,\,X^{2}+aX+b \text{ is irreducible}\}$ is of cardinality $\frac{q+1}{2}-\mathbb{1}_{\jacobi{b}{q}=1}$ for $b \neq 0$. \bigskip

Suppose now $q$ even; the solution to the quadratic equation is then a particular case of the Artin-Schreier theory. Recall that on a field of characteristic $2$, $x \mapsto x^{2}$ is a linear map. The trace of an element of $\For_{2^{n}}$ is the linear map
$$\mathrm{Tr}(x) = x+x^{2}+\cdots + x^{2^{n-1}} \in \For_{2}=\{0,1\}.$$
The image of the Artin-Schreier map $x \mapsto x^{2}+x$ is the $\For_{2}$-subspace of $\For_{2^{n}}$ of elements with trace zero. Then, given an arbitrary monic polynomial $X^{2}+aX+b$ of degree $2$:\vspace{2mm}
\begin{itemize}
\item if $a=0$, then $X^{2}+b=(X+b)^{2}$ is not irreducible.\vspace{2mm}
\item if $a\neq 0$ and $\mathrm{Tr}(ba^{-2})=0$, then $X^{2}+aX+b=(X+a(u+1))(X+au)$, where $u$ and $u+1$ are the two elements of $\For_{2^{n}}$ such that $u^{2}+u=ba^{-2}$.  \vspace{2mm}
\item otherwise, if $a \neq 0$ and $\mathrm{Tr}(ba^{-2})=1$, then $X^{2}+aX+b$ is irreducible.\vspace{2mm}
\end{itemize}
This implies that for every $b \neq 0$, $I_{b}=\{a \in \For_{q}\,\,|\,\,X^{2}+aX+b \text{ is irreducible}\}$ is of cardinality $\frac{q}{2}$.
\bigskip

\subsection{The case $a,b \neq 1$, $q$ odd}
Suppose $q$ odd and $a\neq 1$, $b \neq 1$. Going back to Equation \eqref{noncolinear}, the characteristic polynomial of the matrix $\left(\begin{smallmatrix} a+rs & rb \\ s & b \end{smallmatrix}\right)$ is $X^{2}-(a+rs+b)X+ab$. Notice that the multiplication map $m: (r,s) \in  \For_{q} \times \For_{q} \mapsto rs \in \For_{q}$
satisfies $\card\,m^{-1}(\{0\})=2q-1$ and $\card\, m^{-1}(\{c\})=q-1$ for $c \neq 0$. Therefore, with $u,t,x,y$ fixed:\vspace{2mm}\begin{enumerate}
\item One obtains $q$ times the polynomial $X^{2}-(a+b)X+ab$, corresponding for instance to the case $r=0$ and $s \in \For_{q}$ arbitrary. If $a \neq b$ this gives a contribution 
\begin{equation}q\,\widetilde{A}_{\{X-a: 1, X-b :1\}}=\frac{1}{(q-1)(q^{2}-1)}\,\widehat{A}_{\{X-a: 1, X-b :1\}};\label{qodd1}
\end{equation}
 otherwise if $a=b$ one gets 
 \begin{align}
 \widetilde{A}_{\{X-a:1^{2}\}}&+(q-1)\,\widetilde{A}_{\{X-a:2\}}= \frac{1}{q(q-1)(q^{2}-1)}\,\widehat{A}_{\{X-a:1^{2}\}}+\frac{1}{q(q^{2}-1)}\,\widehat{A}_{\{X-a:2\}} \nonumber\\
 &= \frac{\mathbb{1}_{a=b}}{q(q^{2}-1)} \left(\widehat{A}_{\{X-a:2\}}- \widehat{A}_{\{X-a:1^{2}\}}\right)+ \frac{1}{(q-1)(q^{2}-1)}\,\widehat{A}_{\{X-a: 1, X-b :1\}}.
 \label{qodd2}
 \end{align}
One can also replace \eqref{qodd1} by \eqref{qodd2} thanks to the symbol $\mathbb{1}_{a=b}$.\vspace{2mm}
\item Then, for every $c \in \For_{q}$, including $0$, one obtains $(q-1)$ times the polynomial $X^{2}+cX+ab$. By the discussion of \S\ref{irrpoly}, $\frac{q+1}{2}-\mathbb{1}_{\jacobi{ab}{q}=1}$ values of $c$ give an irreducible polynomial, whence a contribution
\begin{equation}
(q-1)\sum_{c \in I_{ab}} \widetilde{A}_{\{X^{2}+cX+ab:1\}} = \frac{1}{q(q^{2}-1)} \, \sum_{c \in I_{ab}} \widehat{A}_{\{X^{2}+cX+ab:1\}}. \label{qodd5}
\end{equation}
The other values of $c$ correspond to decompositions $X^{2}+cX+ab=(X-\alpha)(X-\beta)$. If $ab$ is not a square, then $\alpha \neq \beta$ and one obtains a contribution
\begin{align}
(q-1)\sum_{c \notin I_{ab}} \widetilde{A}_{\{X-\alpha:1,X-\beta:1\}}&=\frac{q-1}{2}\,\sum_{d \in (\For_{q})^{\times}}  \widetilde{A}_{\{X-ad^{-1}:1,X-bd:1\}} \nonumber\\
&=\frac{1}{2q(q^{2}-1)}\, \sum_{d \in (\For_{q})^{\times}}  \widehat{A}_{\{X-ad^{-1}:1,X-bd:1\}}.\label{qodd3}
\end{align}
If $ab$ is a square then one can have $\alpha=\beta$; in this case the type of the matrix $M=\left(\begin{smallmatrix} a+rs & rb \\ s & b \end{smallmatrix}\right)$ can be either $\{X-\alpha:1^{2}\}$ or $\{X-\alpha:2\} $. The first case is excluded since $rb \neq 0$, and therefore $M-\alpha I \neq 0$. So, if $ab=\delta^{2}$ is a square, then one obtains a contribution
\begin{align}
&(q-1)\,\widetilde{A}_{\{X-\delta:2\}}+(q-1)\,\widetilde{A}_{\{X+\delta:2\}}+ \frac{q-1}{2}\!\!\!\sum_{d \in (\For_{q})^{\times} \setminus \{-b^{-1}\delta,b^{-1}\delta\}}  \!\!\!\widetilde{A}_{\{X-ad^{-1}:1,X-bd:1\}} \nonumber\\
&=\frac{1}{q(q^{2}-1)}\,\left(\widehat{A}_{\{X-\delta:2\}}+\widehat{A}_{\{X+\delta:2\}}\right)+ \frac{1}{2q(q^{2}-1)}\sum_{d \in (\For_{q})^{\times} \setminus \{-b^{-1}\delta,b^{-1}\delta\}}  \!\!\!\widehat{A}_{\{X-ad^{-1}:1,X-bd:1\}}\nonumber \\
&=  \frac{\mathbb{1}_{ab=\delta^{2}}}{2q(q^{2}-1)}\,\left(2\widehat{A}_{\{X-\delta:2\}}-\widehat{A}_{\{X-\delta:1^{2}\}}+2\widehat{A}_{\{X+\delta:2\}}-\widehat{A}_{\{X+\delta:1^{2}\}}\right)\nonumber\\
&\quad+ \frac{1}{2q(q^{2}-1)}\sum_{d \in (\For_{q})^{\times}}  \widehat{A}_{\{X-ad^{-1}:1,X-bd:1\}}.\label{qodd4}
\end{align}
Again, one can replace \eqref{qodd3} by \eqref{qodd4} thanks to the symbol $\mathbb{1}_{ab=\delta^{2}}$.
\vspace{2mm}
\end{enumerate}
So, the remaining part \eqref{noncolinear} of the product $\widehat{A}_{X-a}*\widehat{A}_{X-b}$ is equal to $\frac{(q-1)(q^{2}-1)}{q}$ times the sum of \eqref{qodd2}, \eqref{qodd5} and \eqref{qodd4}, that is to say 
\begin{align}
 \frac{1}{q}\,\widehat{A}_{\{X-a: 1, X-b :1\}}  + \frac{q-1}{q^{2}}&\left(\sum_{c \in I_{ab}} \widehat{A}_{\{X^{2}+cX+ab:1\}}  + \frac{1}{2}\sum_{d \in (\For_{q})^{\times}}  \widehat{A}_{\{X-ad^{-1}:1,X-bd:1\}}\right.\nonumber\\
 &\quad+\frac{\mathbb{1}_{ab=\delta^{2}}}{2}\,\left(2\widehat{A}_{\{X-\delta:2\}}-\widehat{A}_{\{X-\delta:1^{2}\}}+2\widehat{A}_{\{X+\delta:2\}}-\widehat{A}_{\{X+\delta:1^{2}\}}\right)\nonumber\\
&\quad+\left. \vphantom{\frac{1}{2}\sum_{d \in (\For_{q})^{\times}}}\mathbb{1}_{a=b} \left(\widehat{A}_{\{X-a:2\}}- \widehat{A}_{\{X-a:1^{2}\}}\right) \right).\label{noncolinearodd}
\end{align}
Hence, the product $\widehat{A}_{X-a}*\widehat{A}_{X-b}$ is the sum of the quantities \eqref{colinear} and \eqref{noncolinearodd} when $a,b\neq 1$ and $q$ is odd.
\bigskip

\subsection{Other cases and the general product formula}
The exact same discussion applies to the case $a \neq 1$, $b \neq 1$ and $q$ even. Formula \eqref{qodd2} can be kept, and it remains to add for every $c \in \For_{q}$ the $(q-1)$ terms with characteristic polynomial equal to $X^{2}+cX+ab$. Half of the cases yield the contribution \eqref{qodd5}, the only difference with the odd case being that the sum is over a fixed number of values of $c$ (independent of $ab$), namely, $\frac{q}{2}$. For the other values of $c$, remark that $ab$ is always a square in $\For_{q}$, namely, the square of $\delta=(ab)^{\frac{q}{2}}$. Therefore, these over values of $c \notin I_{ab}$ give a contribution
$$ \frac{1}{2q(q^{2}-1)}\,\left(2\widehat{A}_{\{X-\delta:2\}}-\widehat{A}_{\{X-\delta:1^{2}\}}\right) +\frac{1}{2q(q^{2}-1)}\sum_{d \in (\For_{q})^{\times}}  \widehat{A}_{\{X-ad^{-1}:1,X-bd:1\}}.$$
which replaces \eqref{qodd4}. Thus, in the even case, the remaining part of $\widehat{A}_{X-a}*\widehat{A}_{X-b}$ is equal to
\begin{align}
 \frac{1}{q}\,\widehat{A}_{\{X-a: 1, X-b :1\}}  + \frac{q-1}{q^{2}}&\left(\sum_{c \in I_{ab}} \widehat{A}_{\{X^{2}+cX+ab:1\}}  + \frac{1}{2}\sum_{d \in (\For_{q})^{\times}}  \widehat{A}_{\{X-ad^{-1}:1,X-bd:1\}}\right.\nonumber\\
 &\quad\left.+\,\frac{1}{2}\,\left(2\widehat{A}_{\{X-\delta:2\}}-\widehat{A}_{\{X-\delta:1^{2}\}}\right)+ \vphantom{\frac{1}{2}\sum_{d \in (\For_{q})^{\times}}}\mathbb{1}_{a=b} \left(\widehat{A}_{\{X-a:2\}}- \widehat{A}_{\{X-a:1^{2}\}}\right) \right).\label{noncolineareven}
\end{align}
The sum of \eqref{colinear} and \eqref{noncolineareven} gives $\widehat{A}_{X-a}*\widehat{A}_{X-b}$ when $a,b\neq 1$ and $q$ is even.
\bigskip

Finally, when one of the coefficient $a$ or $b$ is equal to $1$, the unique corresponding trivial extension on $(\For_{q})^{2}$ is the identity (partial) isomorphism $\partiso{(\For_{q})^{2}}{(\For_{q})^{2}}{\id}{\id}$. The computation is then trivial and one obtains for instance for $\widehat{A}_{X-a}*\widehat{A}_{X-1}$ with $a\neq 1$ the result
$$(q-1)\,\widehat{A}_{X-a}+\widehat{A}_{\{X-a:1;X-1:1\}}.$$
The first term corresponds to colinear vectors $u$ and $v$, and the second term to non-colinear vectors. When $a$ is also equal to $1$, the same reasoning gives
$$(q-1)\,\widehat{A}_{X-1}+\widehat{A}_{\{X-1:1^{2}\}}.$$
Thus, we have proven the following: 
\begin{theorem}\label{productdegree1}
The product of two generic degree $1$ classes $\widehat{A}_{X-a}$ and $\widehat{A}_{X-b}$ is given by the following formulas. Each time, the writing is decreasing in degree and without terms appearing more than once, but in the sums preceded by a $\frac{1}{2}$, where each term appears twice.
\vspace{2mm}
\begin{itemize}
\item$a \neq b \neq 1$, $q$ odd, $\jacobi{ab}{q}=1$, $ab=\delta^{2}$:
\begin{align*}
& \frac{q-1}{q^{2}}\left(\widehat{A}_{\{X-\delta:2\}}+\widehat{A}_{\{X+\delta:2\}} + \sum_{c \in I_{ab}} \widehat{A}_{\{X^{2}+cX+ab:1\}}  + \frac{1}{2}\sum_{d \in (\For_{q})^{\times}\setminus \{1,b^{-1}a,\pm b^{-1}\delta\}}\!\!\!\!  \widehat{A}_{\{X-ad^{-1}:1,X-bd:1\}} \right) \\
 &+\frac{2q-1}{q^{2}}\,\widehat{A}_{\{X-a: 1, X-b :1\}}+( q-1)\,\widehat{A}_{X-ab} . 
\end{align*}
 \vspace{1mm}
\item$a \neq b \neq 1$, $q$ odd, $\jacobi{ab}{q}=-1$:
\begin{align*}
&\frac{q-1}{q^{2}}\left(\sum_{c \in I_{ab}} \widehat{A}_{\{X^{2}+cX+ab:1\}}  + \frac{1}{2}\sum_{d \in (\For_{q})^{\times}\setminus\{1,b^{-1}a\}}  \widehat{A}_{\{X-ad^{-1}:1,X-bd:1\}} \right)\\
&+ \frac{2q-1}{q^{2}}\,\widehat{A}_{\{X-a: 1, X-b :1\}}  +( q-1)\,\widehat{A}_{X-ab}  .
\end{align*}\vspace{1mm}
\item $a \neq b \neq 1$, $q$ even, $\delta=(ab)^{\frac{q}{2}}$: 
\begin{align*}
& \frac{q-1}{q^{2}}\left(\widehat{A}_{\{X-\delta:2\}}+\sum_{c \in I_{ab}} \widehat{A}_{\{X^{2}+cX+ab:1\}}  + \frac{1}{2}\sum_{d \in (\For_{q})^{\times}\setminus \{1,b^{-1}a,b^{-1}\delta\}}  \widehat{A}_{\{X-ad^{-1}:1,X-bd:1\}} \right)\\
&+ \frac{2q-1}{q^{2}}\,\widehat{A}_{\{X-a: 1, X-b :1\}}  +( q-1)\,\widehat{A}_{X-ab}  .
\end{align*}\vspace{1mm}
\item $a=b \neq 1$, $q$ odd:
\begin{align*}  &\frac{q-1}{q^{2}}\left(2\widehat{A}_{\{X-a:2\}} +\widehat{A}_{\{X+a:2\}}+\sum_{c \in I_{a^{2}}} \widehat{A}_{\{X^{2}+cX+a^{2}:1\}}  + \frac{1}{2}\sum_{d \in (\For_{q})^{\times}\setminus\{\pm 1\}}  \widehat{A}_{\{X-ad^{-1}:1,X-ad:1\}}\right)\\
 &+ \frac{1}{q^{2}}\,\widehat{A}_{\{X-a: 1^{2}\}} +( q-1)\,\widehat{A}_{X-a^{2}} .
\end{align*}
 \vspace{1mm}
\item $a=b \neq 1$, $q$ even:
\begin{align*}
 &\frac{q-1}{q^{2}}\left(2\widehat{A}_{\{X-a:2\}} + \sum_{c \in I_{a^{2}}} \widehat{A}_{\{X^{2}+cX+a^{2}:1\}}  + \frac{1}{2}\sum_{d \in (\For_{q})^{\times}\setminus \{1\}}  \widehat{A}_{\{X-ad^{-1}:1,X-ad:1\}}\right)\\
 &+\frac{1}{q^{2}}\,\widehat{A}_{\{X-a: 1^{2}\}}  +(q-1)\,\widehat{A}_{X-a^{2}}.
\end{align*}
 \vspace{1mm}
\item $a \neq 1$, $b=1$: $\widehat{A}_{\{X-a:1;X-1:1\}}+(q-1)\,\widehat{A}_{X-a}.$\vspace{1mm}
\item $a = b=1$: $\widehat{A}_{\{X-1:1^{2}\}}+(q-1)\,\widehat{A}_{X-1}.$\vspace{1mm}
\end{itemize}
\end{theorem}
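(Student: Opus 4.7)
The plan is to reduce the computation to $\mathscr{Z}(2,\For_{q})$: since both factors have degree $1$, the discussion preceding Theorem \ref{generalFH} shows that the structure coefficients in $\mathscr{Z}(\infty,\For_{q})$ coincide with those in $\mathscr{Z}(2,\For_{q})$. In that algebra,
$$\widehat{A}_{X-a,2} = \frac{1}{q^{2}-1}\sum_{u,v \in (\For_{q})^{2}\setminus\{0\}} \partiso{u}{v}{1}{a},$$
and one must evaluate $\partiso{u}{v}{1}{a}*\partiso{w}{x}{1}{b}$ for all quadruples $(u,v,w,x)$ and collect the results with their multiplicities.

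First I would split on whether $\Vect(v,w)$ has dimension $1$ or $2$. In the colinear case ($w=\alpha v$ for some $\alpha\in\For_{q}^{\times}$) no extension is required and the product is $\partiso{u}{\alpha^{-1}x}{1}{ab}$, a degree-$1$ partial isomorphism of type $\{X-ab:1\}$ contributing $(q-1)\,\widehat{A}_{X-ab}$ after counting. In the non-colinear case I would apply the extension operators $\mathrm{R}_{\Vect(v)}^{(\For_{q})^{2}}$ and $\mathrm{L}_{\Vect(w)}^{(\For_{q})^{2}}$ via the parametrization provided by characterization $(4)$ of Definition \ref{partisodef}, depending on free parameters $r,s \in \For_{q}$ and free vectors $t,y \in (\For_{q})^{2}$. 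Composing yields a $2\times 2$ matrix $\left(\begin{smallmatrix}a+rs & rb \\ s & b\end{smallmatrix}\right)$ whose characteristic polynomial $X^{2}-(a+b+rs)X+ab$ determines the Jordan type of the resulting extension.

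The main obstacle is the case analysis of this Jordan type. Since $(r,s)\mapsto rs$ is $(2q-1)$-to-$1$ above $0$ and $(q-1)$-to-$1$ above each non-zero value, every trace $a+b+rs \in \For_{q}$ is achieved with known multiplicity. Using the enumeration of irreducible monic degree-$2$ polynomials from \S\ref{irrpoly} (with $|I_{ab}|=\frac{q+1}{2}-\mathbb{1}_{\jacobi{ab}{q}=1}$ when $q$ is odd and $|I_{ab}|=q/2$ when $q$ is even), I would partition the contributions according to whether $X^{2}-(a+b+rs)X+ab$ is irreducible, splits with two distinct roots, or has a double root. The key subtlety is that a double root $\alpha$ forces the partial isomorphism to have type $\{X-\alpha:2\}$ rather than $\{X-\alpha:1^{2}\}$, because the off-diagonal entry $rb$ is non-zero; this is the origin of the $\mathbb{1}_{ab=\delta^{2}}$ correction terms, and it also explains the extra $\widehat{A}_{\{X-a:2\}}$ term in the case $a=b\neq 1$.

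To conclude, I would convert each sum $\widetilde{A}_{\bbmu,2}$ into the normalized basis $\widehat{A}_{\bbmu,2}$ using $\widehat{A}_{\bbmu,2}=\sqrt{\card\,A_{\bbmu,2}}\;\widetilde{A}_{\bbmu,2}$ and collect terms, obtaining the five formulas corresponding to the cases $a,b\neq 1$ (separated according to the parity of $q$ and the behavior of $ab$ under the Jacobi symbol or the Artin--Schreier map). The remaining cases $a=1$ or $b=1$ are simpler: the only trivial extension of a degree-$1$ identity on a line to the whole plane is the identity itself, so the products follow from a direct count of colinear versus non-colinear configurations, yielding the last two formulas in the statement.
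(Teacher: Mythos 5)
Your proposal follows the paper's argument exactly: reduce to $\mathscr{Z}(2,\For_q)$, split on whether $\Vect(v,w)$ is a line (giving the $(q-1)\,\widehat{A}_{X-ab}$ contribution) or the whole plane, parametrize the non-colinear trivial extensions via condition~(4) of Definition~\ref{partisodef} to produce the matrix $\left(\begin{smallmatrix}a+rs&rb\\s&b\end{smallmatrix}\right)$, count using the fiber sizes of $(r,s)\mapsto rs$ together with the enumeration of irreducible quadratics from \S\ref{irrpoly}, and rescale to the $\widehat{A}$ basis. One small imprecision worth noting: your criterion ``$rb\neq 0$ forces type $\{X-\alpha:2\}$'' only applies when $r\neq 0$; when $r=0$ (and $a=b$) the matrix is $\left(\begin{smallmatrix}a&0\\s&a\end{smallmatrix}\right)$, whose type is governed by $s$, and the $r=s=0$ configuration is what produces the $\frac{1}{q^2}\,\widehat{A}_{\{X-a:1^2\}}$ term --- this is why the paper splits the non-colinear count into $r=0$ and $r\neq 0$ rather than into $rs=0$ and $rs\neq 0$.
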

\bigskip

The theorem yields readily the product of two completed conjugacy classes of degree $1$ for any $n \in \N$, since 
$$\Pi_{n}\left(\widehat{A}_{\bbmu}\right)=q^{n(2k_{1}-k)} \,\frac{q^{2k(k-k_{1})}\,(q^{-1})_{k}\,(q^{-1})_{n-k+k_{11}}}{(q^{-1})_{k_{11}}\,(q^{-1})_{n-k}}\,\frac{C_{\bbmu\uparrow^{n}}}{\card\,C_{\bbmu}} $$
for any polypartition $\bbmu$. More specifically,
\begin{align}
\Pi_n\left(\widehat{A}_{\{X-1:1\}}\right)&=(q^n -1)\,C_{\emptyset\uparrow^n}\nonumber\\
\Pi_n\left(\widehat{A}_{\{X-1:1^2\}}\right)&=(q^n-1)(q^n-q)\,C_{\emptyset\uparrow^n}\nonumber\\
\Pi_n\left(\widehat{A}_{\{X-a:1\}}\right) &=q^{1-n} \,(q-1) \, C_{\{X-a:1\}\uparrow^n} \nonumber\\
\Pi_n\left(\widehat{A}_{\{X-a:2\}}\right)&=q^{5-2n}\,(q-1) \,C_{\{X-a:2\}\uparrow^n}\nonumber
\end{align}
\begin{align}
\Pi_n\left(\widehat{A}_{\{X-a:1^2\}}\right)&=q^{5-2n}\,(q-1)(q^2-1)\,C_{\{X-a:1^2\}\uparrow^n}\nonumber\\
\Pi_n\left(\widehat{A}_{\{X-a:1;X-1:1\}}\right)&=q^{2-n}\,(q-1)(q^{n-1}-1)\,C_{\{X-a:1\}\uparrow^n}\nonumber\\
\Pi_n\left(\widehat{A}_{\{X-a:1;X-b:1\}}\right)&=q^{4-2n}\,(q-1)^2\,C_{\{X-a:1;X-b:1\}\uparrow^n}\nonumber\\
\Pi_n\left(\widehat{A}_{\{X^2+aX+b:1\}}\right)&=q^{4-2n}\,(q^2-1)\,C_{\{X^2+aX+b:1\}\uparrow^n}.\label{last1}
\end{align}
Applying these formulas to the seven cases of Theorem \ref{productdegree1}, one obtains the expansion in completed conjugacy classes of 
\begin{equation} \Pi_n\left(\widehat{A}_{X-a}*\widehat{A}_{X-b}\right) = (q^n-1)^2\,\frac{C_{\{X-a\}\uparrow^n}*C_{\{X-b\}\uparrow^n}}{\card\, C_{\{X-a\}\uparrow^n}\times \card \,C_{\{X-b\}\uparrow^n}}.
\label{last2}
\end{equation}
 So for instance, if $q$ is odd and $a \neq b \neq 1$, $\jacobi{ab}{q}=1$, $ab=\delta^{2}$, then by projection by $\Pi_n$ of the first case of Theorem \ref{productdegree1}, the product of completed conjugacy classes $C_{\{X-a\}\uparrow^n}*C_{\{X-b\}\uparrow^n}$ in $\C\GL(n,\For_q)$ is given by:\vspace{1mm}
\begin{align*}
C_{\{X-a\}\uparrow^n}*C_{\{X-b\}\uparrow^n}&= q\,C_{\{X-\delta:2\}\uparrow^n}+q\,C_{\{X+\delta:2\}\uparrow^n} +(2q-1) \,C_{\{X-a: 1, X-b :1\}}+q^{n-1}\,C_{\{X-ab:1\}\uparrow^n}  \\
&\quad+\sum_{c \in I_{ab}} (q+1)\,C_{\{X^{2}+cX+ab:1\}\uparrow^n}  \\
&\quad+ \frac{1}{2}\sum_{d \in (\For_{q})^{\times}\setminus \{1,b^{-1}a,\pm b^{-1}\delta\}}\!\!\!\!  (q-1)\,C_{\{X-ad^{-1}:1,X-bd:1\}\uparrow^n}  . 
\end{align*}
Each structure coefficient is indeed a polynomial in $q^n$, here of degree $0$ or $1$. Notice that this formula \emph{cannot} be specialized to the case $a=1$ or $b=1$ (in this case, the sixth and seventh cases of Theorem \ref{productdegree1} give indeed the much simpler expansion $C_{\{X-a:1\}\uparrow^n} * C_{\{X-1:1\}\uparrow^n}=C_{\{X-a:1\}\uparrow^n}$, as can be expected). The other cases are similar and easy computations after Theorem \ref{productdegree1} and Formulas \eqref{last1} and \eqref{last2}.
\bigskip

From these computations, it becomes clear that the determination of the whole multiplication table of $\mathscr{Z}(\infty,\For_{q})$ is not possible. However, it might exist simple rules to determine certain structure polynomials $p_{\bbmu\bbnu}^{\bblambda}(q^{n})$, especially those with $|\bblambda|=|\bbmu|+|\bbnu|$. If it exists, a general rule for these coefficients ``of maximal degree'' probably involves deeply the arithmetics and Galois theory of polynomials over $\For_{q}[X]$.\bigskip

\section*{Acknowledgements}
The author would like to thank Valentin F\'eray for several discussions around the construction of ``generic'' center group algebras; and the anonymous referee for his valuable comments, that allowed to improve the quality of the paper. 
\bigskip

%\bibliographystyle{alpha}
%\bibliography{partial}

\end{document}